% !TeX spellcheck = en_US
\documentclass[a4paper,11pt]{article}
\usepackage[latin1]{inputenc}
\usepackage[T1]{fontenc}
\usepackage{array}
\usepackage{fancyhdr}
\usepackage{longtable}
\usepackage{tabularx}
\usepackage{rotating}

\usepackage{makeidx}
\usepackage{amsmath}
\usepackage{empheq}
\usepackage{amsthm}
\usepackage{amssymb}
\usepackage[mathscr]{eucal}
\usepackage{enumerate}
\usepackage[dvips]{epsfig}
\usepackage{float} 
\usepackage{ifthen} 
\usepackage{extarrows} 
\usepackage{stmaryrd}  
\usepackage{arydshln}    
\usepackage{multirow}
\usepackage{mathtools}
\usepackage[dvips]{geometry}
\geometry{a4paper,left=2.6cm,right=2.6cm, top=3cm, bottom=3cm}
\usepackage{bbm}
\usepackage{xcolor}

\usepackage{listings}
\usepackage{subfig}
\usepackage{dsfont}
\usepackage{xspace}
\usepackage{xfrac}

\usepackage{todonotes}
\usepackage{changes} 

\providecommand{\noopsort}[1]{}

\theoremstyle{plain}
\newtheorem{thm}{Theorem}[section]
\newtheorem{prop}[thm]{Proposition}
\newtheorem{lem}[thm]{Lemma}

\theoremstyle{definition}

\theoremstyle{remark}
\newtheorem{remark}{Remark}[section]

\newcommand{\Cov}{\operatorname{Cov}}

\newcommand{\Prob}{\mathrm{P}}

\newcommand{\ind}{\mathbbmss{1}}

\newcommand{\Span}{\operatorname{span}}

\newcommand{\DomX}{D}

\hyphenation{dif-fe-ren-tial mi-ni-mi-zed}

\usepackage[bookmarksopen,bookmarksnumbered,colorlinks=true,linkcolor=red,citecolor=blue]{hyperref}
\usepackage{bookmark}
\hypersetup{
	pdfauthor={author},
	% ...
}

% --------------------------------------

\newcommand{\YDFN}{Y}	% Derivative-Free Commutative Exponential Wagner-Platen scheme (Nemytskii Operatoren)
\newcommand{\YWP}{Y^{WP}}	% Original Wagner-Platen approximation
	% Derivative-Free Commutative Exponential Wagner-Platen scheme (allg. Operatoren)

\newcommand{\ESRK}{\operatorname{ERKM}}
\newcommand{\SESRK}{\operatorname{DFMM}}

\newcommand{\LIE}{\operatorname{LIE}}

\newcommand{\RKTN}{\operatorname{ERKM1.5}}
\newcommand{\RKTNV}{\operatorname{ERKM1.5V1}}
\newcommand{\EWP}{\operatorname{EWP}}
\newcommand{\EXE}{\operatorname{EXE}}

\newcommand{\e}{\mathrm{e}}
\newcommand{\dd}{\, \mathrm{d}}

\newcommand{\norm}[1]{\left\lVert#1\right\rVert}
\newcommand{\normA}[2]{#1\lVert #2 #1\rVert}

\newcommand{\normLpR}[1]{\left\lVert#1\right\rVert_{L^p(\Omega;\mathbb{R})}}
\newcommand{\normALpR}[2]{#1\lVert #2 #1\rVert_{L^p(\Omega;\mathbb{R})}}

\newcommand{\normLpH}[3]{\left\lVert#3\right\rVert_{L^{#1}(\Omega;H_{#2})}}
\newcommand{\normALpH}[4]{#1\lVert #4 #1\rVert_{L^{#2}(\Omega;H_{#3})}}
\newcommand{\normLpHS}[3]{\left\lVert #3 \right\rVert_{L^{#1}(\Omega;HS(U_0,H_{#2}))}}
\newcommand{\normALpHS}[4]{#1\lVert #4 #1\rVert_{L^{#2}(\Omega;HS(U_0,H_{#3}))}}

\newcommand{\E}[1]{\mathbb{E}\left[ #1 \right]}

\newcommand{\Wds}{\bigg( \int_{lh}^{(l+1)h} ( W_s - W_{lh} ) \dd s \bigg)}
\newcommand{\WdsOK}{\int_{lh}^{(l+1)h} \left( W_s - W_{lh} \right) \dd s}

\newcommand{\etaA}[1]{\big\lVert (\eta-A)^{#1}\big\rVert_{L(H)}}
\newcommand{\etaAe}[1]{\Big\lVert (\eta-A)^{#1} \e^{\tfrac{A}{2}(m-l-\tfrac{1}{2})h} \Big\rVert_{L(H)}}

\newcommand{\sumJ}{\sum_{j\in\mathcal{J}}}

\newcommand{\cdrei}{\hat{c}_1}
\newcommand{\cvier}{\hat{c}_2}
\newcommand{\cfuenf}{\hat{c}_3}
\newcommand{\cacht}{\hat{c}_4}
\newcommand{\cneun}{\hat{c}_5}
\newcommand{\czehn}{\hat{c}_6}
\newcommand{\celf}{\hat{c}_7}
\newcommand{\aelf}{\hat{c}_8}

\allowdisplaybreaks

\title{An Exponential Stochastic Runge-Kutta Type Method of Order up to 1.5 for SPDEs
of Nemytskii-type}

\author{Claudine von Hallern$^1$%
\thanks{
e-mail: claudine.von.hallern@uni-hamburg.de},%$^1$
\ \ 
Ricarda Mi{\ss}feldt$^2$%
\ \
and Andreas R\"o{\ss}ler$^2$\thanks{e-mail: roessler@math.uni-luebeck.de 
\\ Date: December 10th, 2024
}
\bigskip
\\
\small{$^1$Department of Mathematics, Universit\"at Hamburg,} \\
\small{Grindelberg 5, 20144 Hamburg, Germany} \\[0.2cm]
\small{$^2$Institute of Mathematics, Universit\"at zu L\"ubeck,} \\
\small{Ratzeburger Allee 160, 23562 L\"ubeck, Germany} %\\
}

%\date{\today}
\date{}

\begin{document}

\maketitle

\begin{abstract}
\noindent
For the approximation of solutions for stochastic partial differential equations, 
numerical methods that obtain a high order of convergence and at the same time 
involve reasonable computational cost are of particular interest. We therefore 
propose a new numerical method of exponential stochastic Runge-Kutta type that allows 
for convergence with a temporal order of up to~$\sfrac{3}{2}$ and that can be combined with 
several spatial discretizations. The developed family of derivative-free schemes 
is tailored to stochastic partial 
differential equations of Nemytskii-type, i.e., with pointwise multiplicative noise
operators. We prove the strong convergence of these schemes in the root mean-square 
sense and present some numerical examples that reveal the theoretical results. 
\end{abstract}
%
%
%%%%%%%%%%%%%%%%%%%%%%%%%%%%%%%%%%%%%%%%%%%%%%%
%
%
\section{Introduction}
In various areas, such as neuroscience, biology, physics
and geosciences, stochastic partial differential equations (SPDEs)
are applied as models to describe dynamical evolution. Typical models
include, e.g., stochastic reaction-diffusion equations or stochastic wave equations.
The numerical approximation of SPDEs
is a major research field as an analytical solution for this type of equations
can only be computed in very few cases. The complexity of these equations also 
calls for numerical schemes that attain a high order of temporal convergence to 
decrease the overall computational cost. This is also the reason to introduce
derivative-free schemes since Taylor type schemes require the calculation of
derivatives for the involved operators which can be a rather hard task. 
In the last years, there has been increasing work on higher 
order schemes for strong approximation of solutions for semilinear SPDEs.
Note that numerical schemes converging in the strong sense are usually applied
if single trajectories of the solution process are the object of interest.
For example, numerical schemes of Milstein type 
were introduced and investigated, e.g., by Barth and Lang~\cite{MR2996432,MR3027891},
Jentzen and R\"ocker~\cite{MR3320928}, von~Hallern and R\"o{\ss}ler~\cite{MR4112639new}
and Reisinger and Wang~\cite{MR4032895}. 
Furthermore, the Wagner-Platen type scheme
proposed by Becker, Jentzen and Kloeden in~\cite{MR3534472} is a Taylor type
scheme and allows for an even higher order of convergence compared to Euler 
or Milstein type schemes.
Concerning derivative-free schemes, Wang and Gan~\cite{MR3011387} introduced 
a Milstein type scheme for equations containing pointwise multiplicative 
operators. More general equations, with and without a commutative noise 
condition, are also treated in, e.g., \cite{HalRoe2023pre,MR3842926}. 
In general, higher order approximation schemes incorporate iterated
stochastic integrals that need to be simulated, see, e.g., \cite{MR3949104}.
However, in case of commutative noise the terms belonging to iterated stochastic 
integrals can often be transformed into terms that only contain increments of the
driving $Q$-Wiener process that can be easily simulated. This situation naturally
arises in case of pointwise multiplicative noise operators as considered in
the following.
%\\
%

In contrast to the numerical analysis of finite dimensional stochastic differential
equations, we cannot state a fixed number for the order of convergence as this 
is dependent on the particular SPDE under consideration, e.g., the parameters determining
the regularity of the corresponding solution. However, we can still sort the 
schemes by the order of convergence that they can achieve under suitable assumptions. 
The Wagner-Platen type scheme~\cite{MR3534472} has the potential to obtain a 
higher order of convergence compared to the Euler and Milstein schemes mentioned above
provided that, e.g., the coefficients of the considered SPDE are sufficiently smooth.

Focusing on higher order Taylor type numerical schemes, the main disadvantage 
is that they incorporate derivatives of the coefficients. The calculation of 
derivatives in Hilbert spaces, especially for operators, can be difficult which 
may be a reason that prevents people from applying such schemes. This motivates
the development of numerical methods that are easier to implement keeping the
advantage of high orders of convergence.

In this work, we introduce a new exponential stochastic Runge-Kutta type method
that defines a whole family of exponential stochastic Runge-Kutta type schemes. These 
schemes obtain the same high temporal order of convergence as the Wagner-Platen type scheme 
but they do not need the calculation of any derivatives. Further, we show that the 
proposed exponential stochastic Runge-Kutta type schemes allow for significant 
savings of computational cost.
%\\
%

%
In the following, we are concerned with the problem of approximating solutions 
of semilinear stochastic equations that are of the form
\begin{equation} \label{SPDE-NemType}
	\begin{split}
	\mathrm{d} X_t(x) &= \big( A X_t(x) + f(x,X_t(x)) \big) \, \mathrm{d}t 
	+ b(x,X_t(x)) \, \mathrm{d}W_t(x), \quad t\in(0,T], \\
	X_0(x) &= \xi(x) ,
	\end{split}
\end{equation}
for $x \in \DomX$ with some domain $\DomX \subseteq \mathbb{R}^n$.
Here, $A$ is a linear operator, $f$ and $b$
are some suitable mappings that may be nonlinear, $b$ maps into the
space of pointwise multiplicative operators and $(W_t)_{t \in [0,T]}$ 
denotes a $Q$-Wiener process.
Equation~\eqref{SPDE-NemType} can be reformulated as 
an abstract stochastic evolution equation of the form
\begin{equation} \label{SPDE}
	\begin{split}
  	\mathrm{d} X_t &= \big( AX_t+F(X_t)\big) \dd t + B(X_t) \dd W_t, 
  	\quad t \in (0,T], \\
  	X_0 &= \xi,
	\end{split}
\end{equation}
with corresponding mappings $F$ and $B$. In the case that $A$ denotes
a partial differential operator, equations~\eqref{SPDE-NemType} and~\eqref{SPDE}
may also represent some SPDE.
Details and assumptions on the involved mappings, operators and processes are 
given in Section~\ref{Sec:Setting}.
The goal is to approximate the mild solution to 
stochastic evolution equations of type~\eqref{SPDE-NemType} in the strong sense.

In the next section, 
we introduce an exponential stochastic Runge-Kutta type 
method for the strong approximation of mild solutions to stochastic evolution 
equations of type~\eqref{SPDE-NemType}. The proposed method is based on 
the exponential Wagner-Platen type scheme introduced in~\cite{MR3534472} 
considered for equations of type~\eqref{SPDE-NemType} that are written in 
the form~\eqref{SPDE}. In this case, the two commutativity 
conditions~\eqref{eq:comm_con_1} and~\eqref{eq:comm_con_2} are fulfilled and the exponential Wagner-Platen type scheme 
in~\cite{MR3534472} reads as $\YWP_0 = X_0$ and
{\allowdisplaybreaks
\begin{align} \label{Com-Exp-Wagner-Platen-scheme}
	\YWP_{m+1} = &\e^{A \tfrac{h}{2}} \bigg\{ \e^{A\tfrac{h}{2}} \YWP_m + h F(\YWP_m) 
		+ \frac{h^2}{2} F'(\YWP_m) [A \YWP_m+F(\YWP_m)] \nonumber \\
		&+ F'(\YWP_m) \bigg( B(\YWP_m) \int_{mh}^{(m+1)h} ( W_s - W_{mh} ) 
		\, \mathrm{d}s \bigg) \nonumber \\
		&+ \sum_{j \in \mathcal{J}} \frac{h^2}{4} F''(\YWP_m)( B(\YWP_m) g_j, B(\YWP_m) g_j ) 
		+ B(\YWP_m) \Delta W_m \nonumber \\
		&+ A \bigg[ B(\YWP_m) \bigg( \int_{mh}^{(m+1)h} ( W_s-W_{mh} ) \, \mathrm{d}s 
		- \frac{h}{2} \Delta W_m \bigg) \bigg] \nonumber \\
		&+ B'(\YWP_m) \Big( A \YWP_m + F(\YWP_m) \Big) \bigg( h \Delta W_m 
		- \int_{mh}^{(m+1)h} ( W_s - W_{mh} ) \, \mathrm{d}s \bigg) \nonumber \\
		&+ \frac{1}{2} B'(\YWP_m) \Big( B(\YWP_m) \Delta W_m \Big) \Delta W_m \nonumber \\
		&+ \frac{1}{6} B''(\YWP_m) \Big( B(\YWP_m)\Delta W_m , B(\YWP_m) \Delta W_m \Big) 
		\Delta W_m \nonumber \\
		&+ \frac{1}{6} B'(\YWP_m) \Big( B'(\YWP_m) \Big( B(\YWP_m) \Delta W_m \Big) 
		\Delta W_m \Big) \Delta W_m \nonumber \\
		&- \frac{h}{2} \sum_{j \in \mathcal{J}} B'(\YWP_m) \Big( B(\YWP_m) g_j \Big) g_j 
		\nonumber \\
		&- \frac{1}{2} \sum_{j \in \mathcal{J}} B''(\YWP_m) 
		\Big( B(\YWP_m)g_j, B(\YWP_m)g_j \Big) \int_{mh}^{(m+1)h} ( W_s - W_{mh} ) 
		\, \mathrm{d}s \nonumber \\
		&- \frac{h}{2} \sum_{j \in \mathcal{J}} B'(\YWP_m) 
		\Big( B'(\YWP_m) \Big( B(\YWP_m) g_j \Big) g_j \Big) \Delta W_m \bigg\}
\end{align}
}%
for $m \in \{ 0, 1, \ldots, M \}$. For details of the used notation, we refer to 
Section~\ref{Sec:Alg}.
Scheme~\eqref{Com-Exp-Wagner-Platen-scheme} converges temporally with 
strong order up to $\sfrac{3}{2}-\varepsilon$ for 
some $\varepsilon>0$ and needs the calculation of first and second order 
derivatives for $F$ and $B$. 
The idea is to construct an exponential stochastic 
Runge-Kutta type method that incorporates the exponential operator from the 
semigroup that generates the mild solution. 
The presented approach is designed for stochastic 
equations in infinite dimensional spaces and it is not directly 
related to the standard class of exponential Runge-Kutta methods for
(stochastic) ordinary differential equations in a finite dimensional setting.
The operator valued first and second order derivatives for $F$ and $B$ are approximated 
by making use of some customized stages motivated by the general concept of 
Runge-Kutta type methods in order to simplify the implementation and to save 
computational cost.
For the derivation of the exponential stochastic 
Runge-Kutta type method, we focus on the notation in~\eqref{SPDE-NemType}. 
As a result of this, we do not need to compute and evaluate any derivatives 
but require only evaluations of the given mappings $f$ and $b$. At the same 
time, we do not lose the high order of convergence, that is, the 
developed exponential stochastic Runge-Kutta type method allows for 
temporal convergence with strong order up to $\sfrac{3}{2}-\varepsilon$
as well.

The benefits of the proposed exponential stochastic Runge-Kutta method 
become even more obvious for systems of stochastic evolution equations
that allow for additional computational savings compared to the exponential
Wagner-Platen type scheme.
It turns out that the computational cost actually depend only linearly on 
the dimension of the considered stochastic evolution equation 
whereas the computational cost of the Wagner-Platen type scheme depend cubically
on that dimension. Thus, the exponential stochastic Runge-Kutta type method
allows for much more efficient numerical approximations compared to the 
Wagner-Platen type scheme and also compared to many other well known schemes as will be shown in the following.
For a further discussion of the benefits of derivative-free methods for stochastic evolution equations, 
we also refer to~\cite{HalRoe2023pre,MR3842926}.
%\\

The paper is organized as follows: In Section~\ref{Sec:Setting}, we outline our framework. 
Moreover, we introduce in Section~\ref{Sec:DFM} the structure of the exponential stochastic
Runge-Kutta type method and specify a family of exponential stochastic
Runge-Kutta type schemes with minimized number of stages and computational effort.
This method is explicitly tailored to equations with operators that are 
pointwise multiplicative of Nemytskii-type. 
As the main result, a theorem on the temporal convergence of the exponential stochastic Runge-Kutta type schemes
in the root mean-square sense is stated. The computational cost of the
proposed schemes is compared to the Wagner-Platen type scheme in Section~\ref{Sec:CompCost}.
In Section~\ref{Sec:Numerics} implementation issues based on a spectral Galerkin 
discretization are discussed. Further, the theoretical temporal order of convergence 
for some concrete schemes is illustrated by numerical examples 
in Section~\ref{Sec:ExactSolution}--\ref{Sec:Example3}.
We close this work with detailed proofs on the convergence results of the 
proposed exponential stochastic Runge-Kutta type method in Section~\ref{Sec:proofs}.
\section{The exponential stochastic Runge-Kutta type method}
\label{Sec:Alg}
The numerical method that we present in this section is specifically tailored to
equations which involve operators $F$ and $B$ that are of Nemytskii-type. We 
detail the framework in the next subsection and then introduce the structure of
an exponential stochastic Runge-Kutta type method.
\subsection{Setting}\label{Sec:Setting}
In the following, let $\DomX \subseteq \mathbb{R}^n$ for $n\in\mathbb{N}$ 
denote some 
domain and for $d \in \mathbb{N}$
let $(H,\langle \cdot, \cdot \rangle_H)$ and $(U,\langle \cdot,\cdot \rangle_U)$ 
denote some separable real Hilbert spaces where
$H=L^2(\DomX,\mathbb{R}^d)$ and $U=L^2(\DomX,\mathbb{R})$.
Further, for some $0<T<\infty$, let 
%$W \colon [0,T] \times \DomX \times \Omega \to \mathbb{R}$ 
$(W_t)_{t \in [0,T]}$ be a $U$-valued $Q$-Wiener process on a complete 
probability space $(\Omega, \mathcal{F}, \Prob)$ adapted to a filtration
$(\mathcal{F}_t)_{t \in [0,T]}$ that fulfills the usual conditions. The 
corresponding covariance operator $Q \in L(U)$ for the $Q$-Wiener process 
is assumed to be a non-negative and symmetric trace class
operator. Then, $(U_0, \langle \cdot,\cdot \rangle_{U_0})$ with 
$U_0 = Q^{\frac{1}{2}}U$ and 
$\langle u,v \rangle_{U_0} = \langle Q^{-\sfrac{1}{2}} u, Q^{-\sfrac{1}{2}} v \rangle_U$ is a 
real Hilbert space and we denote by $(g_j)_{j \in \mathcal{J}} \subset U_0$
some arbitrary orthonormal basis of $U_0 \subseteq U$ for some countable index 
set $\mathcal{J}$. In order to deal with an arbitrary
orthonormal basis of $U_0$, we assume that $U_0=U$ for simplicity in the following.

We consider the stochastic evolution equation~\eqref{SPDE-NemType} that can
be rewritten as the more abstract equation~\eqref{SPDE}.
The linear operator $A \colon \mathcal{D}(A) \subseteq H \to H$ is assumed to 
have a spectrum $\sigma(A) \subseteq 
\{ \lambda \in \mathbb{C} : \Re(\lambda) < \eta \}$ for some $\eta \geq 0$ and 
to be the generator of an analytic $C_0$-semigroup.

Further, we define a family of interpolation Hilbert spaces 
$H_r := \mathcal{D}((\eta-A)^r)$ for $r \in \mathbb{R}$ with norm 
$\|x\|_{H_r} =\|(\eta-A)^r x\|_H$ for $x \in H_r$ that are associated
to $\eta-A$.
We assume that the initial value
$\xi \colon \Omega \to H_{\gamma}$ for some $\gamma \in [1, \tfrac{3}{2})$ is
$\mathcal{F}_0$-$\mathcal{B}(H_{\gamma})$-measurable.

Let $f \in C^{0,2}(\DomX \times \mathbb{R}^d, \mathbb{R}^d)$ and let the 
operator $F \colon H \to H$ be defined by composition such that
\begin{align*}
	F( v)(x) = f(x, v(x)), \quad v\in H, \ x \in \DomX ,
\end{align*}
and such that $F \in C^2(H,H)$ is globally Lipschitz continuous 
with $F(H_{\alpha}) \subseteq H_{\alpha}$ for some 
$\alpha \in (\gamma-1, \gamma]$.

For the diffusion, let $b \in C^{0,2}(\DomX \times \mathbb{R}^d, \mathbb{R}^d)$ 
and define the operator $B \colon H \to L_{HS}(U_0,H)$ by
\begin{align*}
	(B( v)u)(x) = b(x,v(x)) \cdot u(x), 
	\quad v \in H, \ u \in U_0, \ x \in \DomX ,
\end{align*}
such that $B \in C^2(H, L_{HS}(U_0,H))$ is a globally Lipschitz continuous 
mapping.

Under these assumptions there exists an up to modifications unique mild solution 
\begin{equation*}
	X_t(x) = \e^{At} \xi(x) + \int_0^t \e^{A(t-s)} f(x,X_s(x)) \, \mathrm{d}s
	+ \int_0^t \e^{A(t-s)} b(x,X_s(x)) \, \mathrm{d}W_s(x) 
	\quad \Prob\text{-a.s.}
\end{equation*}
to the stochastic evolution equation~\eqref{SPDE-NemType} for $x \in \DomX$ 
and $t \in [0,T]$, which is
$(\mathcal{F}_t)_{t \in [0,T]}$-predictable, has a continuous modification 
and fulfills $\int_0^T \|X_s\|_H^2 \, \mathrm{d}s < \infty$ $\Prob$-a.s., 
see, e.g.,~\cite[Thm~7.2]{MR3236753}.
\subsection{A family of exponential stochastic Runge-Kutta type schemes and a convergence result}
\label{Sec:DFM}
We propose an exponential stochastic Runge-Kutta type  method 
for the strong approximation
of solutions for stochastic evolution equation~\eqref{SPDE-NemType} with
pointwise multiplicative operators of Nemytskii-type. We abbreviate it as $\ESRK$ method, implying that it is tailored to multiplicative operators. The newly developed $\ESRK$ method is based 
on the Wagner-Platen type scheme~\eqref{Com-Exp-Wagner-Platen-scheme} 
which is a Taylor type scheme incorporating derivatives of the involved
mappings and operators. The main idea for the proposed $\ESRK$ method is 
to replace all derivatives by making use
of evaluations of the corresponding mappings and operators at specially
tailored stages. The $\ESRK$ method is designed in such a way that it 
can attain the same strong temporal order of convergence of up to 
$\sfrac{3}{2}-\varepsilon$ for some $\varepsilon>0$ as the Wagner-Platen 
type scheme. For sake of simplicity, we restrict our considerations 
to equidistant time discretizations.

Let $M \in \mathbb{N}$ be the number of time steps 
defining a grid $0=t_0 < t_1 < \ldots < t_M =T$ on the time interval
$[0,T]$ with step size $h=\frac{T}{M}$ and $t_m=m \, h$ for 
$m\in\{0,1, \ldots, M\}$. Further, for $m \in \{0,1, \ldots, M-1\}$
let $\Delta W_m = W_{(m+1)h} - W_{mh}$ denote the increment of the 
$Q$-Wiener process and let $(Y_m)_{0\leq m \leq M}$ denote the 
approximation process where $Y_m$ is the approximation at time $t_m$.
Then, the $s$-stages $\ESRK$ method for the approximation of solutions
for stochastic evolution equation~\eqref{SPDE-NemType} is defined by 
$\YDFN_0 = X_0$ and
\begin{equation} \label{RKTN-scheme}
	\begin{split}
	\YDFN_{m+1} = \e^{A h} \YDFN_m 
	&+ \e^{A \tfrac{h}{2}} \sum_{i=1}^s \sum_{k=1}^3 \alpha_i^{(k)} \, 
	 f(\cdot, K_i^0) \, \theta_{k}^0
	+ \e^{A \tfrac{h}{2}} \sum_{i=1}^s \sum_{k=1}^5 \beta_i^{(k)} \, 
	b(\cdot, K_i^1) \, \theta_k^1 \\
	&+ \e^{A \tfrac{h}{2}} \, A \sum_{i=1}^s \gamma_i^{(1)} \, 
	b(\cdot, K_i^1) \, \theta_1^2
	\end{split}
\end{equation}
for $m\in\{0,1, \ldots, M-1\}$ with stages 
\begin{equation} \label{RKTN-scheme-stages}
	\begin{split}
	K_i^0 &= \YDFN_m 
	+ \sum_{j=1}^s a_{i,j}^{(0,1)} \, h \, \big( A \, K_j^0 + f(\cdot, K_j^0) \big)
	+ \sum_{j=1}^s \big( b_{i,j}^{(0,1)} \, h + b_{i,j}^{(0,2)} \, \sqrt{h} \big) \, 
	b(\cdot, K_j^1)  , \\
	K_i^1 &= \YDFN_m 
	+ \sum_{j=1}^s a_{i,j}^{(1,1)} \, h \, \big( A \, K_j^0 + f(\cdot, K_j^0) \big)
	+ \sum_{j=1}^s \big( b_{i,j}^{(1,1)} \, h + b_{i,j}^{(1,2)} \, \sqrt{h} \big) \, 
	b(\cdot, K_j^1)
	\end{split}
\end{equation}
for $1 \leq i \leq s$ and random weights
\begin{align*}
	\theta_1^0 &= h, &\quad \theta_2^0 &= \frac{\int_{mh}^{(m+1)h} 
	( W_s - W_{mh} ) \dd s}{h} , &\quad \theta_3^0 &= h \sumJ g_j^2 , 
	\\
	\theta_1^1 &= \Delta W_m , &\quad \theta_2^1 &= \frac{\int_{mh}^{(m+1)h} 
	( W_s - W_{mh} ) \dd s}{h} , &\quad \theta_3^1 &= 
	\sumJ g_j^2 - \frac{(\Delta W_m)^2}{h} , 
	%\\
\end{align*}
\begin{align*}
	\theta_4^1 &= \frac{1}{h} \bigg( \int_{mh}^{(m+1)h} ( W_s - W_{mh} ) \dd s \sumJ g_j^2 - \frac{1}{3} ( \Delta W_m )^3 \bigg) , &\quad 
	\theta_5^1 &= \Delta W_m \sumJ g_j^2 - \frac{( \Delta W_m )^3}{3h} , \\
	\theta_1^2 &= \int_{mh}^{(m+1)h} ( W_s - W_{mh} ) \dd s 
	- \frac{h}{2} \Delta W_m \, . &\quad &
\end{align*}
The coefficients of the $\ESRK$ method~\eqref{RKTN-scheme}--\eqref{RKTN-scheme-stages} 
can be represented by the following Butcher tableau: 
\renewcommand{\arraystretch}{1.6}
\begin{equation} \label{Butcher-tableau-Ito-1.0}
	{
		\begin{tabular}{|c|c|c}
			$A^{(0,1)}$ & $B^{(0,1)}$ & $B^{(0,2)}$ \\
			\cline{1-3}
			$A^{(1,1)}$ & $B^{(1,1)}$ & $B^{(1,2)}$ \\
			\hline
			\hline
			${\alpha^{(1)}}^T$ & ${\beta^{(1)}}^T$ & ${\beta^{(4)}}^T$ \\
			\cline{1-3}
			${\alpha^{(2)}}^T$ & ${\beta^{(2)}}^T$ & ${\beta^{(5)}}^T$ \\
			\cline{1-3}
			${\alpha^{(3)}}^T$ & ${\beta^{(3)}}^T$ & ${\gamma^{(1)}}^T$
		\end{tabular}
	}
\end{equation}
For the Butcher tableau~\eqref{Butcher-tableau-Ito-1.0},
the weight coefficients are composed to vectors of length $s$ with $\alpha^{(k)} 
= (\alpha_i^{(k)})_{1 \leq i \leq s}$ for $k \in \{1,2,3\}$, $\beta^{(l)} 
= ( \beta_i^{(l)} )_{1 \leq i \leq s}$ for $l \in \{1,2,3,4,5 \}$ and $\gamma^{(1)} 
= ( \gamma_i^{(1)} )_{1 \leq i \leq s}$. The coefficients of the stages are
arranged in $s \times s$-matrices with
$A^{(k,1)} = ( a_{i,j}^{(k,1)} )_{1 \leq i,j \leq s}$ and $B^{(k,l)} = 
( b_{i,j}^{(k,l)} )_{1 \leq i,j \leq s}$ for $k \in \{0,1\}$ and $l \in \{1,2\}$.

The $\ESRK$ method~\eqref{RKTN-scheme}--\eqref{RKTN-scheme-stages} makes use
of random weights $\theta_k^l$ for the main recursion formula~\eqref{RKTN-scheme}
whereas the stages $K_i^0$ and $K_i^1$ use purely deterministic weights only.
The specific choice of the weights allows for some degrees of freedom and
the presented random weights $\theta_k^l$ are chosen such that the number $s$
of necessary stages is minimized. We note that all necessary random variables 
$\theta_k^l$ can be easily simulated by sampling from Gaussian distributions.
Further, we point out that the $\ESRK$ method allows for an arbitrary choice 
of the orthonormal basis $(g_j)_{j \in \mathcal{J}}$ of $U_0$ which is a
valuable feature in practice.

In the following, we restrict our considerations to explicit schemes where the
coefficient matrices $A^{(k,1)}$ and $B^{(k,l)}$ are assumed to be left lower 
triangle matrices. Additionally,
we restrict our analysis to the important case where the computational cost is
minimized by choosing many of the coefficients equal zero such that the number
of necessary evaluations of the functions $f$ and $b$ is as small as possible. 
Here, we note that this does not necessarily lead to a unique class of $\ESRK$ schemes. 
In the following, we want to restrict our considerations to the family of 
explicit $\ESRK$ schemes~\eqref{RKTN-scheme}--\eqref{RKTN-scheme-stages} 
with coefficients given by the Butcher tableau in Table~\ref{SRK-scheme-Alg-Coeff}, which we denote by $\RKTN$. 
Although this family of $\ESRK$ schemes needs $s=6$ stages, most of the coefficients
are zero which additionally reduces computational cost.
The whole family of $\RKTN$ schemes is defined by only seven parameters 
$c_1, \ldots, c_7 \in \mathbb{R} \setminus \{ 0 \}$.
For any choice of these parameters it can be proved that the resulting 
$\RKTN$ scheme converges in the strong sense with temporal order $\gamma \in 
[1,1.5)$ depending on the specific stochastic evolution equation,
see also Section~\ref{Sec:Setting}.
\begin{table}[tbp] %[tbp]
    \footnotesize
    \renewcommand{\arraystretch}{1.5}
    \[
\hspace*{-0.7cm}    \begin{array}{|cccccc|cccccc|cccccc}
&&&&&&&&&&&&&&&&& \\
        c_1 &&&&& &0&&&&&  &0&&&&& \\
        0&0&&&&  & c_2 &0&&&&  &0&0&&&& \\
        0&0&0&&& &0&0&0&&& &c_3&0&0&&& \\
        0&0&0&0&& &0&0&0&0&& &-c_3&0&0&0&& \\
        0&0&0&0&0& &0&0&0&0&0& &0&0&0&0&0& \\
        \cline{1-18}
        &&&&&  &&&&&& &&&&&& \\
        c_4 &&&&& &0&&&&&  &0&&&&& \\
        0&0&&&&  & c_5 &0&&&&  &0&0&&&& \\
        0&0&0&&& &0&0&0&&&  & -c_6 &0&0&&& \\
        0&0&0&0&& &0&0&0&0&&
        &c_6&0&0&0&& \\
        0&0&0&0&0& &0&0&0&0&0&
        &-\frac{c_7}{c_6}&0&0&0
        & \frac{c_7}{c_6} & \\
        \hline
        \hline
        1-\frac{1}{2 c_1} & \frac{1}{2 c_1} & 0 & 0 & 0 & 0
        & 1-\frac{1}{c_4} & \frac{1}{c_4} & 0 & 0 & 0 & 0
        & \frac{1}{c_6^2} & 0 & 0 & -\frac{1}{2 c_6^2}
        & -\frac{1}{2 c_6^2} & 0 \\
        \cline{1-18}
        \frac{-1}{c_2} & 0 & \frac{1}{c_2} & 0 & 0 &0
        & \frac{1}{c_4} & -\frac{1}{c_4} & 0 & 0 & 0 & 0
        & \frac{1}{2 c_7} & 0 & 0 & 0 & 0 & -\frac{1}{2 c_7} \\
        \cline{1-18}
        \frac{-1}{2 c_3^2} & 0 & 0 & \frac{1}{4 c_3} &
        \frac{1}{4 c_3} & 0
        & \frac{1}{2 c_5} & 0 & \frac{-1}{2 c_5} &  0 & 0 & 0
        & 1 & 0 & 0 & 0 & 0 & 0
    \end{array}
    \]
    \caption{Butcher tableau for $\RKTN$ schemes
    	of order $\gamma \in [1,\tfrac{3}{2})$ with $s=6$ stages and
    	coefficients
		$c_1, c_2, c_3, c_4, c_5, c_6, c_7 \in \mathbb{R} \setminus \{0\}$.}
		\label{SRK-scheme-Alg-Coeff}
\end{table}

Before we state the main result on the strong temporal order of convergence for
the family of $\RKTN$ schemes~\eqref{RKTN-scheme}--\eqref{RKTN-scheme-stages} given by
Table~\ref{SRK-scheme-Alg-Coeff}, an auxiliary result giving a uniform bound
for the $L^p$-moments of the numerical approximation is given.
\begin{prop} \label{Prop:Bounds}
	Let some arbitrary coefficients ${c_1}, {c_2}, {c_3}, {c_4}, {c_5}, {c_6}, {c_7} 
	\in \mathbb{R} \setminus \{0\}$ be given, let $p \geq 2$ and assume that
	\begin{align*}
		K := \, &  \sup_{r\in [0,1]} \etaA{-r} 
		+ \sup_{t\in (0,T]} \ 
		\sup_{\kappa \in \big\{ 0, \tfrac{\theta}{2}, \gamma -\theta, \gamma
			-\tfrac{\theta}{2}, \gamma -\tfrac{\theta}{2}-\delta, 1 \big\} \cap[0,1]} 
		\norm{t^{\kappa} (\eta -A)^{\kappa} \e^{At}}_{L(H)} \\
		&+ \frac{1}{1-\theta} + T + p + \eta +  \sum_{i=1}^7 c_i
		+ \sup_{v\in H_{{\gamma}}} \frac{\|b(\cdot,v)\|_{{H}}}{1+\|v\|_{H_{\gamma}}} \\
		&+ \sup_{v \in H_{\alpha}} \frac{\norm{F(v)}_{H_{\alpha}}}{1+\norm{v}_{H_{\alpha}}} 
		+ \sup_{v\in H} \norm{F'(v)}_{L(H)} 
		+ \sup_{v \in H_{\beta}} \frac{\norm{B(v)}_{HS(U_0,H_{\beta})}}{1 +\norm{v}_{H_{\beta}}} \\
		&
		+ \sup_{v\in H} \norm{B'(v)}_{L(H,HS(U_0,H))}
		+ \sup_{v \in H_{\gamma}} \, 
		\sup_{ w \in H_{\delta} \setminus \{ 0\} }
		\frac{\norm{B'(v)w}_{HS(U_0,H_{\delta})}}{\norm{w}_{H_{\delta}}} \\
		&+\sup_{v,w\in H} \frac{\big(\norm{F''(v)}_{L^{(2)}(H,H)} 
		+   \norm{B''(v)}_{L^{(2)}(H,HS(U_0,H))} \big)
		\norm{B(w)}_{HS(U_0,H)}^2 }{ 1 +\norm{v}_H +\norm{w}_H} < \infty
	\end{align*}
	for some constants $\gamma\in[1,\frac{3}{2})$, $\alpha \in(\gamma-1,\gamma]$, 
	$\beta \in (\gamma -\tfrac{1}{2}, \gamma]$, $\delta \in
	(\gamma-1, \beta]$, $\theta = \max \{ \gamma-\alpha,
	\gamma-\tfrac{1}{2}, 2(\gamma-\beta), 2(\gamma-\delta-\tfrac{1}{2}) \}$ 
	and $\eta \in [0, \infty)$
	such that the assumptions in Section~\ref{Sec:Setting} are fulfilled.
	Then, there exists some non-decreasing function $C \colon [0,\infty) \to [1,\infty)$ 
	such that for the approximation defined by the exponential stochastic 
	Runge-Kutta type schemes~\eqref{RKTN-scheme}--\eqref{RKTN-scheme-stages} given by 
	Table~\ref{SRK-scheme-Alg-Coeff} it holds that
	\begin{equation*}
		\sup_{M \in \mathbb{N}} \ \sup_{m \in \{0,1,\ldots,M\}}
		\normLpH{p}{\gamma}{Y_m} 
		\leq C(K) \, (1+ \| \xi \|_{L^p(\Omega; H_{\gamma})} )  \, .
	\end{equation*}
\end{prop}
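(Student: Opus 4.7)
The plan is to obtain the bound by induction on $m$, derive a recursive inequality of the form
$\normLpH{p}{\gamma}{Y_{m+1}}^p \le (1+Ch)\normLpH{p}{\gamma}{Y_m}^p + Ch$, and conclude via discrete Gronwall. Since each step introduces only a factor $(1+Ch)$ and $mh \le T$, the resulting bound is uniform in $M$ and in $m$, which is precisely the statement of Proposition~\ref{Prop:Bounds}. All constants must be tracked monotonically in $K$ so that the final $C$ is a non-decreasing function of $K$.

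The first step is to control the six stages $K_i^0, K_i^1$ in suitable $L^p$-norms in terms of $\normLpH{p}{\gamma}{Y_m}$. Since the Butcher tableau in Table~\ref{SRK-scheme-Alg-Coeff} is strictly lower triangular, each stage is an \emph{explicit} function of $Y_m$ and of stages of lower index, so one proceeds by induction on $i$. Using $\gamma \ge 1$ so that $H_\gamma \hookrightarrow \mathcal{D}(A)$, the term $h A K_j^0$ is controlled by $h \|K_j^0\|_{H_\gamma}$, and the global Lipschitz and growth bounds on $f,b$ encoded in the constant $K$ then give bounds on $\|K_i^0\|_{L^p(\Omega;H)}$ and analogous norms for $K_i^1$ of the form $C(K)\bigl(1+\normLpH{p}{\gamma}{Y_m}\bigr)$. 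A finer stage analysis should additionally show that $f(\cdot,K_i^0)\in H_\alpha$ and $b(\cdot,K_i^1)\in H_\beta$ with $L^p$-norms of this form; this is essential for the next step.

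The second step is to take $L^p(\Omega;H_\gamma)$-norms in \eqref{RKTN-scheme} and bound each summand. The leading term $(\eta-A)^\gamma \e^{Ah} Y_m$ contributes $\e^{\eta h}\normLpH{p}{\gamma}{Y_m}$, yielding the $(1+Ch)$ prefactor. For the remaining terms, one factors $(\eta-A)^\gamma \e^{A h/2} = (\eta-A)^{\gamma-\kappa}\e^{A h/2}(\eta-A)^\kappa$ with $\kappa=\alpha$ for the drift summands and $\kappa=\beta$ for the diffusion summands, and invokes the analytic-semigroup smoothing $\bigl\|t^{\gamma-\kappa}(\eta-A)^{\gamma-\kappa}\e^{At}\bigr\|_{L(H)} \le C$, which is precisely one of the quantities absorbed into $K$. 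The $L^p$-moments of the weights $\theta_k^l$, computable directly from Gaussian moments of increments of $W$, combine with the stage bounds from step one to yield an overall contribution of order at least $h$.

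The main obstacle is the term $\e^{A h/2} A\,b(\cdot,K_i^1)\,\theta_1^2$, where the unbounded operator $A$ appears explicitly. Writing $A=\eta-(\eta-A)$ and exploiting $\bigl\|(\eta-A)^{\gamma+1-\beta}\e^{A h/2}\bigr\|_{L(H)} \le C h^{\beta-\gamma-1}$, we pick up a negative power of $h$; this is compensated by the \emph{specific} scaling $\theta_1^2 = \int_{mh}^{(m+1)h}(W_s-W_{mh})\,\dd s - \tfrac{h}{2}\Delta W_m$, whose $L^p$-norm is of order $h^{3/2}$ thanks to the cancellation (not merely $h^{1/2}$ as for a raw increment). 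The net order is $h^{\beta-\gamma+1/2}$, positive precisely because $\beta > \gamma-\tfrac{1}{2}$ by assumption. Similar delicate cancellations are needed for $\theta_3^1,\theta_4^1,\theta_5^1$, which mix powers of $\Delta W_m$ with $\sumJ g_j^2$: here one uses that $\sumJ g_j^2 \in H$ with controlled norm (from the trace-class assumption on $Q$) and that the specific combinations feature a centering or higher-order structure, so that the product with $b(\cdot,K_i^1)$ integrates to the required order. Assembling all term-by-term estimates produces the recursive inequality, and discrete Gronwall concludes.
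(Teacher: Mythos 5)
Your overall strategy---a one-step recursion $\normLpH{p}{\gamma}{Y_{m+1}}^p\le(1+Ch)\normLpH{p}{\gamma}{Y_m}^p+Ch$ closed by the standard discrete Gronwall lemma---is not the paper's approach, and it contains a gap that I do not think can be repaired within that framework. A single step offers only $h/2$ worth of analytic smoothing, which is not enough to pay for the loss of spatial regularity between the spaces in which $F$ and $B$ are controlled ($H_\alpha$, $H_\beta$, $H_\delta$) and the space $H_\gamma$ in which $Y_{m+1}$ must be measured. Concretely, for the drift term you must estimate $\big\|(\eta-A)^{\gamma}\e^{Ah/2}F(K_i^0)\big\|$ by factoring through $H_\alpha$, which costs a factor $(h/2)^{-(\gamma-\alpha)}$; the per-step contribution is then of order $h^{1-(\gamma-\alpha)}$, and summing over $M=T/h$ steps gives $T\,h^{-(\gamma-\alpha)}\to\infty$ whenever $\alpha<\gamma$. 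The same failure occurs for the diffusion term (of order $h^{1-2(\gamma-\beta)}$ per step in the squared norm even after using martingale orthogonality to kill the cross term) and, as you yourself compute, for the $A\,b(\cdot,K_i^1)\,\theta_1^2$ term, whose squared per-step contribution $h^{1+2(\beta-\gamma)}$ is $o(1)$ but not $O(h)$. ``Positive order'' is not the right criterion: you need each per-step error to be $O(h)$ in the squared norm for the $(1+Ch)$-recursion to close, and that holds only in the boundary case $\alpha=\beta=\gamma$. Moreover, your claim that the weights combine with the stage bounds to give ``an overall contribution of order at least $h$'' is false for $\theta_1^1=\Delta W_m$ unless the conditional independence of the increments is invoked, which your write-up never does.

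The paper avoids all of this by never performing a one-step estimate: it first telescopes the scheme into the closed form $Y_m=\e^{Amh}X_0+\sum_{l=0}^{m-1}\e^{A(m-l-\tfrac{1}{2})h}\{\cdots\}$, so that the increment generated at step $l$ is smoothed over the \emph{elapsed} time $(m-l-\tfrac{1}{2})h$ rather than over $h/2$. The regularity loss then appears as a singular but integrable kernel $\big((m-l)h\big)^{-\theta}$ with $\theta<1$ (which is exactly why $\tfrac{1}{1-\theta}$ is part of $K$), and the conclusion follows from a \emph{generalized} discrete Gronwall lemma with singular weight. Two further ingredients you omit are also load-bearing: (i) the stochastic terms are assembled into a single stochastic integral over $[0,mh]$---after rewriting the weights via It\^o's formula and the commutativity Lemmas~\ref{Lem:CC1} and~\ref{lem:CC2} as iterated stochastic integrals---so that the Burkholder--Davis--Gundy-type inequality can be applied globally; a pathwise triangle inequality over the steps would cost an extra $h^{-1/2}$; and (ii) the finite differences of $f$ and $b$ in the stages must be rewritten as integral-form Taylor remainders involving $F'$, $F''$, $B'$, $B''$, since the hypotheses bound those derivatives and the products such as $\|F''(v)\|\,\|B(w)\|^2$, not arbitrary differences of $f$ and $b$.
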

The proof of this proposition is given in Section~\ref{Sec:Bound}.
%\\
%
%
Next, we analyze the strong temporal convergence of the proposed exponential stochastic Runge-Kutta
type schemes~\eqref{RKTN-scheme}--\eqref{RKTN-scheme-stages} given by Table~\ref{SRK-scheme-Alg-Coeff}. 
The uniform boundedness of the moments 
is essential for the proof of the convergence result given in the next theorem. 
Note that the exponential stochastic Runge-Kutta type schemes~\eqref{RKTN-scheme}--\eqref{RKTN-scheme-stages} 
given by Table~\ref{SRK-scheme-Alg-Coeff} attain a temporal order of convergence $\gamma$ which can take values up to
$\sfrac{3}{2}-\varepsilon$ for some $\varepsilon>0$ depending on the specific SPDE 
that is considered. More precisely, the maximum value of $\gamma$ is 
determined by the ranges for the parameters $\alpha$, $\beta$, $\delta$ and $\theta$ as well as
by the initial value $\xi \colon \Omega\rightarrow H_{\gamma}$ that depend on the considered SPDE.
\begin{thm} \label{Prop:Conv}
	Let some arbitrary coefficients ${c_1}, {c_2}, {c_3}, {c_4}, {c_5}, {c_6}, {c_7} 
	\in \mathbb{R} \setminus \{0\}$ be given
	and assume that
	\begin{align*}
		K := \, & \sup_{M\in\mathbb{N}} \ \sup_{m \in \{0,1,\ldots,M\}}
		\norm{Y_m}_{L^6(\Omega, H_{\gamma}) }
	 	+ \sup_{r\in [0,1]} \etaA{-r} 
	 	+ \sup_{\substack{v,w \in H_{\beta} \\ v \neq w}}
	 	\frac{\| b(\cdot,v)-b(\cdot,w) \|_H}{ \|v-w\|_H } \\
	 	&+ (\min\{\alpha+1,\beta+1/2,\delta+1,3/2\}-\gamma)^{-1} + T + \eta 
	 	+ \ind_{(1,\infty)}(\alpha) \cdot \Big( \frac{1}{\alpha -1} \Big) 
	 	+\sum_{i=1}^7 c_i \\
	 	&+ \ind_{(1,\infty)}(\gamma) \cdot \Big( \frac{1}{\gamma -1} \Big)
	 	+ \sup_{v \in H_{\alpha}} \frac{\norm{F(v)}_{H_{\alpha}}}{1+\norm{v}_{H_{\alpha}}} 
	 	+\sup_{v\in H_{{\gamma}}} \frac{\|b(\cdot,v)\|_{{H}}}{1+\|v\|_{H_{\gamma}}}
 		+ \sup_{v \in H_{\beta}}
 		\frac{\norm{B(v)}_{HS(U_0,H_{\beta})}}{1+\norm{v}_{H_{\beta}}} \\
	 	&+ \sup_{v\in H} \norm{B'(v)}_{L(H,HS(U_0,H))}
	 	+ \sup_{v \in H_{\gamma}} \ \sup_{w \in H_{\delta}\setminus\{0\}}
	 	\frac{\norm{B'(v)w}_{HS(U_0,H_{\delta})}}{\norm{w}_{H_{\delta}}} \\
	 	&+ \sum_{i=0}^2 \sup_{\substack{v,w \in H \\ v\neq w}} \bigg[
	 	\frac{\norm{F^{(i)}(v)-F^{(i)}(w)}_{L^{(i)}(H,H)} 
	 	+ \norm{B^{(i)}(v)-B^{(i)}(w)}_{L^{(i)}(H,HS(U_0,H))}}{\norm{v-w}_H} \bigg] \\
		&+ \sup_{t\in (0,T]} \ \sup_{\kappa \in
		\big\{ 0, \tfrac{1}{2}, 1-\alpha, 2-\alpha, 1-\beta, \tfrac{3}{2}-\beta,
		\tfrac{1}{2}-\delta, 1-\delta, 2-\gamma \big\} \cap[0,1]} 
		\norm{t^{\kappa} (\eta -A)^{\kappa} \e^{At}}_{L(H)} < \infty
	\end{align*}
	for some constants $\gamma\in[1,\frac{3}{2})$, $\alpha \in(\gamma-1,\gamma]$,
	$\beta \in (\gamma -\tfrac{1}{2}, \gamma]$, $\delta \in
	(\gamma-1, \beta]$, $\theta = \max \{ \gamma-\alpha,
	\gamma-\tfrac{1}{2}, 2(\gamma-\beta), 2(\gamma-\delta-\tfrac{1}{2}) \}$ 
	and $\eta \in [0, \infty)$
	such that the assumptions in Section~\ref{Sec:Setting} are fulfilled.
	Then, the approximations defined by the exponential stochastic Runge-Kutta type 
	schemes~\eqref{RKTN-scheme}--\eqref{RKTN-scheme-stages} given by Table~\ref{SRK-scheme-Alg-Coeff}
	converge to the exact solution $(X_t)_{t \in [0,T]}$ of the stochastic evolution
	equation~\eqref{SPDE-NemType} with
	\begin{equation*}
    	\max_{m \in \{0,1, \ldots, M\}}
	    \| X_{t_m} - \YDFN_m \|_{L^2(\Omega,H)} \leq C(K) \, h^{\gamma}
	\end{equation*}
	for any $h= \frac{T}{M},$ $M \in \mathbb{N}$ and some non-decreasing function 
	$C \colon [0,\infty) \to [1,\infty)$ that is independent of $h$.
\end{thm}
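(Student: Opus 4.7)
The plan is to bound the global error by inserting the exponential Wagner-Platen type scheme \eqref{Com-Exp-Wagner-Platen-scheme} as an intermediary: with $Y^{WP}$ denoting its approximation on the same mesh and driven by the same Wiener increments, the triangle inequality gives
\[
\max_m \|X_{t_m} - \YDFN_m\|_{L^2(\Omega,H)} \leq \max_m \|X_{t_m} - Y^{WP}_m\|_{L^2(\Omega,H)} + \max_m \|Y^{WP}_m - \YDFN_m\|_{L^2(\Omega,H)}.
\]
The first summand is of order $h^{\gamma}$ by the convergence analysis of \cite{MR3534472}, whose hypotheses are exactly what the finiteness of $K$ encodes in the present setting. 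The bulk of the work is therefore the second summand, for which a one-step estimate of the form ``if $Y^{WP}_m = \YDFN_m$ then $\|Y^{WP}_{m+1} - \YDFN_{m+1}\|_{L^2(\Omega,H)} \leq C h^{\gamma+1}(1+\|\YDFN_m\|_{L^6(\Omega,H_\gamma)})$'' has to be combined with a discrete Gronwall-type stability argument to yield the desired global order; the uniform moment bound supplied by Proposition~\ref{Prop:Bounds} is what makes the right-hand side of the local estimate summable.

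For the local analysis, I would Taylor-expand each stage $K_i^0, K_i^1$ around $\YDFN_m$ using the $C^2$-regularity of $f$ and $b$. Because of the sparsity of the Butcher tableau in Table~\ref{SRK-scheme-Alg-Coeff}, every nontrivial stage has the form $\YDFN_m + c\,\Delta$ with $\Delta$ equal to one of $h(A\YDFN_m + f(\cdot,\YDFN_m))$, $h\,b(\cdot,\YDFN_m)$ or $\sqrt{h}\,b(\cdot,\YDFN_m)$. The linear combinations encoded in the weight rows $\alpha^{(k)}, \beta^{(l)}, \gamma^{(1)}$ are precisely centred finite-difference quotients, so that, for example, $\sum_i \alpha_i^{(1)} f(\cdot,K_i^0)$ coincides with $f(\cdot,\YDFN_m) + \tfrac{h}{2} F'(\YDFN_m)(A\YDFN_m + F(\YDFN_m))$ up to a Taylor remainder, and analogous identities reproduce the $F''$, $B'$ and $B''$ contributions appearing in \eqref{Com-Exp-Wagner-Platen-scheme}. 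After multiplying by the random weights $\theta_k^l$, whose relevant joint moments can be computed explicitly from their definitions in terms of $\Delta W_m$, $\int_{mh}^{(m+1)h}(W_s - W_{mh})\,\mathrm{d}s$ and $\sum_{j\in\mathcal{J}} g_j^2$, and pre-multiplying by $\e^{A h/2}$, every leading-order term in $Y^{WP}_{m+1}$ is matched exactly, leaving only remainders which one has to show are of order $h^{\gamma+1}$.

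The global error estimate is then obtained by propagating the recursion for $E_m := Y^{WP}_m - \YDFN_m$: the dominant linear part $\e^{Ah} E_m$ is contractive in $L(H)$ uniformly in $h$, the remaining contributions are globally Lipschitz in $(Y^{WP}_m,\YDFN_m)$ thanks to the bounds entering $K$, and a standard discrete Gronwall lemma delivers $\|E_m\|_{L^2(\Omega,H)} \leq C h^\gamma$ uniformly in $m$. The principal obstacle, and the step that requires the most care, is the appearance of the unbounded operator $A$ inside the stage $h(A K_j^0 + f(\cdot,K_j^0))$: to arrive at a clean $h^{\gamma+1}$ local estimate one must quantify the Taylor remainders in the appropriate fractional-order spaces $H_\kappa$ and exploit the smoothing $\norm{(\eta-A)^\kappa \e^{A h/2}}_{L(H)} \leq C h^{-\kappa}$ listed among the quantities controlled by $K$, balancing the $h^{-\kappa}$ loss against $h^\kappa$ gained from the regularity $\YDFN_m \in H_\gamma$. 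A secondary, purely technical, hurdle is the careful bookkeeping of cross-moments of the $\theta_k^l$ to ensure that the centred quantities (for instance $\sumJ g_j^2 - (\Delta W_m)^2/h$) contribute only at the correct order, which is ultimately the reason for the specific choice of $\theta_3^1$, $\theta_4^1$ and $\theta_5^1$.
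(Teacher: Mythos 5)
Your decomposition is genuinely different from the paper's, and it contains a gap at its central step. The paper does \emph{not} compare $\YDFN_m$ with the actual Wagner--Platen iterates $\YWP_m$; instead it introduces the auxiliary process $\bar{Y}_m$ obtained by applying the Wagner--Platen one-step sums to the \emph{same} iterates $Y_l$ of the new scheme. The point of that choice is that the difference $\bar{Y}_m-Y_m$ then consists purely of Taylor remainders in which every operator ($F'$, $F''$, $B'$, $B''$, $A$) is evaluated along a segment emanating from the single point $Y_l$ — no difference of two distinct arguments ever appears — while the remaining piece $X_{mh}-\bar{Y}_m$ is controlled by rerunning the Wagner--Platen consistency analysis with evaluation points $Y_l$, producing a bound of the form $C(K)h\sum_l\|X_{lh}-Y_l\|_{L^2(\Omega;H)}^2+C(K)h^{2\gamma}$ to which the discrete Gronwall lemma is applied for the \emph{full} error. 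Your route instead requires a stability estimate for the one-step maps: you assert that, apart from $\e^{Ah}E_m$, ``the remaining contributions are globally Lipschitz in $(\YWP_m,\YDFN_m)$ thanks to the bounds entering $K$.'' This is exactly where the argument breaks. The Wagner--Platen step contains $\tfrac{h^2}{2}F'(Y)[AY+F(Y)]$, $B'(Y)(AY+F(Y))(\cdots)$ and $A[B(Y)(\cdots)]$, and the corresponding stage of the new scheme contains $h(AK_j^0+f(\cdot,K_j^0))$. Differencing any of these at two arguments produces $A(\YWP_m-\YDFN_m)$ (or $A$ applied to a difference of $B$-values), which is \emph{not} controlled by $\|\YWP_m-\YDFN_m\|_H$; the quantities collected in $K$ give no Lipschitz bound for these $A$-dependent terms in the $H$-norm, and the smoothing factor $\|(\eta-A)^\kappa\e^{Ah/2}\|_{L(H)}\le Ch^{-\kappa}$ cannot be commuted past $F'(Y)$ or $B'(Y)$ to rescue the estimate. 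One would have to propagate the error in $H_\gamma$ (or restructure the comparison), which is a substantially different and harder argument — and is precisely what the paper's choice of $\bar{Y}_m$ is designed to avoid.

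A second, smaller issue: a local $L^2$-error of order $h^{\gamma+1}$ combined with Gronwall is not how the stochastic part of the error accumulates. The paper's remainder $\Upsilon^B_l(s)$ has $L^2$-size only $\mathcal{O}((s-lh)^{1/2}h^{\gamma-1/2})$, i.e.\ the local mean-square defect is $\mathcal{O}(h^{\gamma+1/2})$, and the global order $h^\gamma$ is recovered only because these defects enter through a stochastic integral whose variances add (It\^o isometry / \cite[Thm 4.37]{MR1207136}), giving $\sqrt{M}\cdot h^{\gamma+1/2}\sim h^{\gamma}$. A blanket $h^{\gamma+1}$ one-step bound in $L^2$ is not available for the martingale contributions, so your local-error statement must be split into a drift part of order $h^{\gamma+1}$ and a martingale part of order $h^{\gamma+1/2}$ whose zero-mean structure is exploited. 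Your identification of the weight rows of Table~\ref{SRK-scheme-Alg-Coeff} as finite-difference approximations of the derivative terms in \eqref{Com-Exp-Wagner-Platen-scheme} is correct and matches the paper's Taylor expansions, but the two issues above mean the proof as outlined does not go through.
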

We detail the proof for Theorem~\ref{Prop:Conv} in Section~\ref{Sec:Convergence}.
Note that the assumptions in Proposition~\ref{Prop:Bounds} and
Theorem~\ref{Prop:Conv} cover the assumptions from the work on the 
exponential Wagner-Platen type scheme~\eqref{Com-Exp-Wagner-Platen-scheme}
in~\cite[Prop~1, Thm~1]{MR3534472} as we use 
their proof in an intermediate step, see Section~\ref{Sec:proofs} for details. 
Moreover, we impose some additional conditions on $b$ and tighten the 
assumption on the term 
\begin{align*}
	\sup_{v,w\in H} \frac{\big(\norm{F''(v)}_{L^{(2)}(H,H)} 	
		+ \norm{B''(v)}_{L^{(2)}(H,HS(U_0,H))} \big)
	\norm{B(w)}_{HS(U_0,H)}^2 }{ 1 +\norm{v}_H +\norm{w}_H} < \infty
\end{align*}
slightly for our proof of Proposition~\ref{Prop:Bounds}.
\subsection{Computational cost of the exponential stochastic Runge-Kutta type schemes}
\label{Sec:CompCost}
Now, we want to compare the introduced exponential stochastic Runge-Kutta 
type schemes~\eqref{RKTN-scheme}--\eqref{RKTN-scheme-stages} with coefficients 
given in Table~\ref{SRK-scheme-Alg-Coeff} to the exponential Wagner-Platen type
scheme~\eqref{Com-Exp-Wagner-Platen-scheme} which essentially is a stochastic
Taylor type scheme. These schemes are developed for a temporal discretization
of a stochastic evolution equation of type~\eqref{SPDE-NemType} and can be 
combined with some spatial 
discretization for a fully discretized implementation. Since the considered 
numerical schemes can in principle be combined with various spatial discretization 
approaches, we restrict our comparison to the purely time discretized schemes 
as they are given in, e.g., \eqref{Com-Exp-Wagner-Platen-scheme} and 
\eqref{RKTN-scheme}--\eqref{RKTN-scheme-stages}. Therefore, we compare the 
computational effort of these schemes for one arbitrary time step if they
are applied to an $L^2(\DomX,\mathbb{R}^d)$-valued stochastic evolution 
equation~\eqref{SPDE-NemType} for some $d \in \mathbb{N}$ in the following.

Since for the two considered schemes the operators $A$, $e^{Ah}$ and
$e^{A \frac{h}{2}}$ have to be applied two times in
each step, the computational cost for these operations are the same
for both schemes. 
Moreover, the computational cost for the simulation of the involved random
variables based on the $Q$-Wiener process are exactly the same for the 
schemes under consideration and thus make no difference for the comparison.
Finally, the computational effort corresponding to the evaluation of the basis 
functions $g_j$ for $j \in \mathcal{J}$ is also the same and
can be neglected asymptotically as these functions do not change over time and thus have to be evaluated at some $x \in \DomX$ only once.
Thus, for a comparison of the computational effort for the $\RKTN$ schemes
and the exponential Wagner-Platen type scheme, we focus on the number of necessary
evaluations of functionals, i.e., evaluations of real valued functions,
see also \cite{HalRoe2023pre,MR3842926} for a detailed reasoning of a 
similar cost model. For the functions $f,b \colon \DomX \times \mathbb{R}^d 
\to \mathbb{R}^d$ in stochastic evolution equation~\eqref{SPDE-NemType}
with $f=(f^k)_{1 \leq k \leq d}$ and $b=(b^k)_{1 \leq k \leq d}$,
we compare the number of necessary evaluations for each real valued function 
$f^k \colon \DomX \times H \to \mathbb{R}$ and $b^k \colon \DomX \times H \to \mathbb{R}$ 
for $1 \leq k \leq d$ in order to compare the computational cost for
the $\RKTN$ schemes and the exponential Wagner-Platen type scheme.

Considering the $\RKTN$ schemes~\eqref{RKTN-scheme} with stages~\eqref{RKTN-scheme-stages} 
and coefficients in Table~\ref{SRK-scheme-Alg-Coeff}
one can easily see that $5$~evaluations of each $f^k$ and $6$~evaluations of each
$b^k$ for $k\in\{1, \ldots, d\}$ are necessary in each time step if the approximation needs
to be evaluated at any $x \in \DomX$. Thus, the computational cost for any
$\RKTN$ scheme is given by $11 d$~real valued function evaluations for 
each time step.

On the other hand, for the exponential Wagner-Platen type scheme one needs to 
evaluate the real valued functions $f^k$, $\tfrac{\partial}{\partial y^i} f^k$ and
$\tfrac{\partial^2}{\partial y^i \partial y^j} f^k$ as well as $b^k$,
$\tfrac{\partial}{\partial y^i} b^k$ and $\tfrac{\partial^2}{\partial y^i \partial y^j} b^k$
for $i,j,k \in \{1, \ldots, d\}$. Considering~\eqref{Com-Exp-Wagner-Platen-scheme},
the computational cost for the exponential Wagner-Platen type scheme is given 
as 
$2(d+d^2+d^3)$ which is the number of necessary evaluations of real valued 
functions for each evaluation of the approximation at some $x \in \DomX$.

Comparing the computational effort for the up to order $\sfrac{3}{2}$~schemes, 
we can see that for $d=1$ the computational effort for the exponential 
Wagner-Platen type scheme is with $6$ evaluations of real valued functions 
less than that for the $\RKTN$ schemes with $11$ evaluations of real valued 
functions. However, the main advantage of the $\RKTN$ schemes is that the 
first and second derivatives of $f^k$ and $b^k$ do not need to be calculated
which may justify the application of the $\RKTN$ schemes anyway. 
The situation changes significantly for $d \geq 2$ where the computational 
effort for the $\RKTN$ schemes is always less than that for the exponential 
Wagner-Platen type scheme. Noting that the computational effort grows linearly 
with $d$ for the $\RKTN$ schemes, the computational savings can be dramatic 
for higher dimensional systems as the computational effort for the exponential 
Wagner-Platen type scheme grows cubically with the dimension $d$. This is a 
typical situation that arises for Taylor type approximation methods.
\section{Implementation and numerical examples}\label{Sec:Numerics}
For an implementation of the proposed $\RKTN$ schemes~\eqref{RKTN-scheme}--\eqref{RKTN-scheme-stages} 
defined by Table~\ref{SRK-scheme-Alg-Coeff}
and all other
numerical schemes that we consider for a comparison, we implement
a spectral Galerkin discretization of the space as an example. 
Further, we discuss the simulation of all involved random variables
based on the underlying $Q$-Wiener process. In Section~\ref{Sec:NumEx}, 
some numerical examples are considered in order to demonstrate 
the theoretical findings and the performance of the proposed $\RKTN$
schemes compared to some well known numerical schemes. 
\subsection{Spectral Galerkin implementation}
\label{Sec:Implementation}
In order to implement the proposed $\RKTN$ schemes~\eqref{RKTN-scheme}--\eqref{RKTN-scheme-stages}, 
we combine them with a 
spatial spectral Galerkin approximation method on the state space. 
Here, we assume that there exist some countable index set $\mathcal{I}$ and 
a family $(\lambda_i)_{i \in \mathcal{I}}$ of real numbers with 
$\inf_{i \in \mathcal{I}} \lambda_i > -\eta$ for some $\eta \geq 0$ 
of eigenvalues of $-A$ and an orthonormal basis of eigenvectors 
$(e_i)_{i \in \mathcal{I}}$ for $H$ 
such that $-A e_i = \lambda_i e_i$ for $i \in \mathcal{I}$ and
$-A v = \sum_{i \in \mathcal{I}} \lambda_i \langle v, e_i \rangle_H e_i$ for all
$v \in \mathcal{D}(A)$.
Then, we consider projections of the temporal approximations on a finite 
dimensional subspace of $H$ as well as a projection of the 
driving stochastic process $(W_t)_{t\in[0,T]}$ on a finite dimensional 
subspace of $U$. We therefore introduce the 
projection operator $P_N \colon H \to H_N$ for some $N \in \mathbb{N}$ given as
\begin{equation} \label{Eqn:P-N-Projection}
	P_N h = \sum_{i \in \mathcal{I}_N} \langle h, e_i \rangle_H e_i, 
	\quad h \in H,
\end{equation}
for some subset $\mathcal{I}_N \subseteq \mathcal{I}$ with $| \mathcal{I}_N | = N$.
Here, $H_N = \Span \{ e_i : \ i \in \mathcal{I}_N \}$ denotes some $N$-dimensional
subspace of $H$.
Further, we choose $(\tilde{e}_j)_{j \in \mathcal{J}}$ for some countable index set 
$\mathcal{J}$ to be an ONB of $U$ such that 
$Q \tilde{e}_j = \eta_j \tilde{e}_j$ for some $\eta_j \geq 0$ and $j \in \mathcal{J}$.
In the following, we set $g_j = \sqrt{\eta_j} \tilde{e}_j$ for $j \in \mathcal{J}$ as an example
which is possible since we assume that $U_0=U$.
However, note that in general $(g_j)_{j \in \mathcal{J}}$ can be an arbitrarily chosen 
ONB for $U_0$ that can be different to 
$(\sqrt{\eta_j} \tilde{e}_j)_{j \in \mathcal{J}}$.
For some $K \in \mathbb{N}$ and some subset $\mathcal{J}_K \subset \mathcal{J}$ 
such that $| \mathcal{J}_K | = K$ we denote by
\begin{equation} \label{Eqn:W-N-Projection}
	W_t^K  = \sum_{j \in \mathcal{J}_K} \sqrt{\eta_j} \, \beta_t^j \, \tilde{e}_j , \quad t \in [0,T],
\end{equation}
the orthogonal projection of the $Q$-Wiener process $(W_t)_{t \in [0,T]}$ on the $K$-dimensional
subspace $U_K = \Span \{ \tilde{e}_j : \ j \in \mathcal{J}_K \} \subseteq U$. Moreover, let
$\Delta W_m^K = W_{(m+1)h}^K - W_{mh}^K$ denote the increment of the projected $Q$-Wiener process
on the time interval $[t_m,t_{m+1}]$.
Then, we consider the projected $\ESRK$
approximation given by $\YDFN_0^{M,N,K} = P_N X_0$ and
\begin{equation} \label{RKTN-scheme-PN}
	\begin{split}
		\YDFN_{m+1}^{M,N,K} = P_N \, \e^{A \tfrac{h}{2}} \Big( \e^{A \tfrac{h}{2}} \YDFN_m^{M,N,K}
		&+ \sum_{i=1}^s \sum_{k=1}^3 \alpha_i^{(k)} \, f(\cdot, K_i^0) \, \theta_k^0
		+ \sum_{i=1}^s \sum_{k=1}^5 \beta_i^{(k)} \, b(\cdot, K_i^1) \, \theta_k^1 \\
		&+ A \sum_{i=1}^s \gamma_i^{(1)} \, b(\cdot, K_i^1) \, \theta_1^2 \Big)
	\end{split}
\end{equation}
with stages 
\begin{equation} \label{RKTN-scheme-stages-PN}
	\begin{split}
		K_i^0 &= P_N \Big( \YDFN_m^{M,N,K}
		+ \sum_{j=1}^s a_{i,j}^{(0,1)} \, h \, \big( A \, K_j^0 + f(\cdot, K_j^0) \big)
		+ \sum_{j=1}^s \big( b_{i,j}^{(0,1)} \, h + b_{i,j}^{(0,2)} \, \sqrt{h} \big) \, 
		b(\cdot, K_j^1) \Big) ,\\
		K_i^1 &= P_N \Big( \YDFN_m^{M,N,K}
		+ \sum_{j=1}^s a_{i,j}^{(1,1)} \, h \, \big( A \, K_j^0 + f(\cdot, K_j^0) \big)
		+ \sum_{j=1}^s \big( b_{i,j}^{(1,1)} \, h + b_{i,j}^{(1,2)} \, \sqrt{h} \big) \, 
		b(\cdot, K_j^1) \Big)
	\end{split}
\end{equation}
for $1 \leq i \leq s$ and $m\in\{0,1, \ldots, M-1\}$, 
where we replace the $Q$-Wiener process $W$ by the 
projected process $W^K$ and where we replace the index set $\mathcal{J}$ by
$\mathcal{J}_K$ in the expressions for the random weights $\theta_2^0$, 
$\theta_3^0$, $\theta_1^1$, $\theta_2^1$, $\theta_3^1$, $\theta_4^1$,
$\theta_5^1$ and $\theta_1^2$.
For the spectral Galerkin projected version of the exponential Wagner-Platen type
scheme, we refer to \cite[Eqn.~(21)]{MR3534472}, which can be calculated in the
same manner as for the $\RKTN$ schemes.
\subsection{Simulation of mixed integrals}
\label{Sec:MixedIntegrals}
For an implementation of the proposed $\RKTN$ schemes~\eqref{RKTN-scheme}--\eqref{RKTN-scheme-stages}, 
we face the problem that 
it involves mixed integrals of the form
\begin{equation*}
	\int_{mh}^{(m+1)h} (W_s-W_{mh}) \, \mathrm{d}s
\end{equation*}
for $m \in \{0,\ldots,M-1\}$ as they appear in the random variables 
$\theta_2^0$, $\theta_2^1$, $\theta_4^1$, $\theta_1^2$, see Section~\ref{Sec:DFM}. 
In order to compute a step with the numerical method, one has to take 
care of their joint simulation in accordance with the 
increments of the $Q$-Wiener process $\Delta W_m$ which are involved 
in many of the other random variables, e.g., $\theta_1^1$, $\theta_3^1$. 
%\\

Note that this is not special for the $\RKTN$ schemes; rather these random variables 
have to be simulated in the exponential Wagner-Platen type 
scheme~\eqref{Com-Exp-Wagner-Platen-scheme} as well. In~\cite{MR3534472}, the 
authors therefore give the following lemma, see \cite[Lemma~3]{MR3534472} for a proof.
\begin{lem}\label{Lem:Mixed}
	Let $Q \colon U \to U$ be a non-negative symmetric trace class operator and 
	let $(W_t)_{t\in[0,T]}$ be a $Q$-Wiener process taking values in $U$. 
	Then it holds for all $t_0,t\in[0,T]$ with $t_0\leq t$ and all $u_1,u_2\in U$ that
	\begin{equation*}
		\left(\Cov\left(\begin{array}{c}
			W_t-W_{t_0} \\ 
			\int_{t_0}^t(W_s-W_{t_0})\,\mathrm{d}s
		\end{array}
		\right)\right)
		\left(\begin{array}{c}
			u_1\\ u_2
		\end{array}
		\right)
		= \left(\begin{array}{c}
			(t-t_0)Qu_1+ \frac{1}{2}(t-t_0)^2 Q u_2 \\
			\frac{1}{2} (t-t_0)^2 Q u_1 + \frac{1}{3}(t-t_0)^3 Q u_2
		\end{array}
		\right).
	\end{equation*}
\end{lem}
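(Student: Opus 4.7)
The plan is to evaluate the $2\times 2$ block covariance operator of the pair $(W_t - W_{t_0},\, \int_{t_0}^t (W_s - W_{t_0})\dd s)$ entry by entry. In each block the computation reduces to the elementary scalar identity
\[
\Erw\!\bigl[\langle W_r - W_{t_0}, u\rangle_U \, \langle W_s - W_{t_0}, v\rangle_U\bigr] = \min(r - t_0,\, s - t_0) \, \langle Q u, v\rangle_U
\]
for $r, s \in [t_0, t]$ and $u, v \in U$, which follows from the defining properties of a $Q$-Wiener process: for $r \le s$ I would split $W_s - W_{t_0} = (W_s - W_r) + (W_r - W_{t_0})$, drop the first summand using independence of increments together with mean zero, and then use that $W_r - W_{t_0}$ has covariance operator $(r - t_0)Q$.

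With this identity in hand, I would compute the four blocks in turn. The top-left block is the familiar increment covariance $(t - t_0)Q$. For the upper off-diagonal block $\Cov(W_t - W_{t_0},\, \int_{t_0}^t (W_s - W_{t_0})\dd s)$ applied to $u_2$, I would pull the time integral outside the expectation via Fubini; the decomposition $W_t - W_{t_0} = (W_t - W_s) + (W_s - W_{t_0})$ eliminates the first piece by independence, leaving $(s - t_0)Qu_2$ under the integral, whose primitive on $[t_0, t]$ is $\tfrac{1}{2}(t - t_0)^2 Qu_2$. The bottom-right block reduces, after two applications of Fubini to the double Bochner integral $\int_{t_0}^{t}\!\int_{t_0}^{t} \Erw[(W_r - W_{t_0})\,\langle W_s - W_{t_0}, u_2\rangle_U] \dd r \dd s$, to the deterministic double integral
\[
\int_{t_0}^{t} \int_{t_0}^{t} \min(r - t_0,\, s - t_0) \dd r \dd s \cdot Q u_2 = \tfrac{1}{3}(t - t_0)^3 \, Q u_2 ,
\]
and the remaining lower off-diagonal block follows from the general symmetry $\Cov(X, Y)^* = \Cov(Y, X)$ together with self-adjointness of $Q$.

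There is no genuine obstacle in this argument beyond careful bookkeeping; the only spot demanding a bit of attention is the justification of interchanging $\Erw$ with the Bochner integral $\int_{t_0}^t \cdot \dd s$ for the $U$-valued integrands appearing above. This is legitimate because $Q$ is trace class, so $\Erw \|W_s - W_{t_0}\|_U^2 = (s - t_0)\,\operatorname{trace}(Q)$ grows only linearly in $s - t_0$, which yields uniform Bochner integrability on the bounded interval $[t_0, t]$ and makes every use of Fubini rigorous.
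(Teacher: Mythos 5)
Your proof is correct. Note that the paper itself does not supply a proof of this lemma; it only cites Lemma~3 of the reference \cite{MR3534472}, so there is no in-paper argument to compare against. Your block-by-block computation is the standard one and is sound: the kernel identity $\Erw[\langle W_r-W_{t_0},u\rangle_U\langle W_s-W_{t_0},v\rangle_U]=\min(r-t_0,s-t_0)\langle Qu,v\rangle_U$ obtained from independence of increments, the Fubini interchanges justified by $\Erw\|W_s-W_{t_0}\|_U^2=(s-t_0)\operatorname{trace}(Q)<\infty$, and the elementary integrals $\int_{t_0}^t(s-t_0)\,\mathrm{d}s=\tfrac{1}{2}(t-t_0)^2$ and $\int_{t_0}^t\int_{t_0}^t\min(r-t_0,s-t_0)\,\mathrm{d}r\,\mathrm{d}s=\tfrac{1}{3}(t-t_0)^3$ together give exactly the four claimed blocks, with the lower off-diagonal block following from adjoint symmetry and the self-adjointness of $Q$.
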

We employ this lemma for the simulation of our numerical examples below 
in the following way. It is well known that a vector of real-valued random 
variables ${\xi}\in\mathbb{R}^n$, for some $n\in\mathbb{N}$, which is 
normally distributed with mean $\mu$ and covariance matrix $C$, can be 
obtained from a vector of standard normally distributed random variables 
$z\in\mathbb{R}^n$ by ${\xi} = \mu + C^{1/2} {z}$. Here, $C^{1/2}$ 
denotes the Cholesky decomposition of the matrix $C$. 
%\\

In our simulations, we consider the projected process 
$W_t^K = \sum_{j = 1}^K \sqrt{\eta_j} \, \beta_t^j \, \tilde{e}_j$, $t\in[0,T]$, 
that is, a finite sum of real-valued random variables. In order to compute 
the mixed integrals involving this process and its increments simultaneously, 
we compute the Cholesky decomposition of the covariance matrix 
\begin{equation*}
	C = \begin{pmatrix}
		h & \frac{1}{2} h^2 \\
		\frac{1}{2} h^2 & \frac{1}{3}h^3
	\end{pmatrix} ,
\end{equation*} 
according to Lemma~\ref{Lem:Mixed} with $t_0 = t_m$ and $t=t_{m+1}$.
We then draw a matrix ${(z_1,z_2)^\top} \in \mathbb{R}^{2K\times M}$ of standard 
normally distributed random variables. This allows to compute  
\begin{equation*}
	\begin{pmatrix}
		\beta_{t_{m+1}}^j -\beta_{t_m}^j \\ 
		\int_{mh}^{(m+1)h}(\beta^j_s-\beta^j_{t_m}) \, \mathrm{d}s 
	\end{pmatrix}
	= \begin{pmatrix} 
		{h}^{\sfrac{1}{2}} & 0 \\
		\frac{1}{2} h^{\sfrac{3}{2}} & \frac{1}{2 \sqrt{3}} h^{\sfrac{3}{2}}
	\end{pmatrix} 
	\begin{pmatrix}
		{(z_1})_m^j \\ 
		{(z_2)}_m^j
	\end{pmatrix}
\end{equation*}
for all $j\in\{1,\ldots,K\}$ and $m\in\{0,\ldots,M-1\}$.
With these random variables, we compute $\Delta W^K_m$ and $\int_{mh}^{(m+1)h}(W^K_s-W^K_{t_m})\,\mathrm{d}s$ 
by multiplying with the square-root of the eigenvalues $\sqrt{\eta_j}$ and eigenfunctions 
$\tilde{e}_j$ for $j\in\{1,\ldots,K\}$ and then adding them up respectively, for each time step.
\begin{remark}
	In this work, we consider the case where mapping $F$ and the pointwise 
	multiplicative operator $B$ are of
	Nemytskii-type. We do not have to simulate any iterated stochastic integrals
	in this setting as the commutativity conditions needed in~\cite{MR3534472}
	for the exponential Wagner-Platen type scheme or the Milstein type scheme 
	in~\cite{MR3320928,MR3842926} are naturally fulfilled. This
	allows us to express the iterated stochastic integrals in terms of increments of
	the $Q$-Wiener process.
\end{remark}
\subsection{Numerical examples}
\label{Sec:NumEx}
For an analysis of performance, we compare the $\RKTN$ schemes to 
the linear implicit Euler ($\LIE$) scheme~\cite{MR1825100}, the exponential 
Euler ($\EXE$) scheme, see e.g.~\cite{MR3047942}, the derivative-free 
Milstein type ($\SESRK$) scheme~\cite{MR3842926}
and the exponential Wagner-Platen type ($\EWP$) scheme~\cite{MR3534472} 
for different numerical examples.
For simplicity, we choose 
$c_1= c_2 = c_3 = c_4 = c_5 = c_6 = c_7=1$ as the coefficients for the $\RKTN$ 
scheme in our comparison and denote this scheme as $\RKTNV$. 

In the following examples, let $H=L^2(\DomX,\mathbb{R})$ with $\DomX=(0,1)$ 
and let $A \colon \mathcal{D}(A) \subseteq H \to H$ with $A = \kappa \Delta$
be the Laplacian multiplied by some constant $\kappa>0$ with Dirichlet boundary
conditions on $H$. As an ONB $\{e_k : k \in \mathbb{N} \}$ on $H$, we choose
the eigenfunctions $e_k(x) = \sqrt{2} \sin(k \pi x)$ for $x \in \DomX$ of $-A$ and 
denote the corresponding eigenvalues by $\lambda_k = \kappa \pi^2 k^2$ for
$k \in \mathbb{N}$.
For all numerical schemes under consideration, we apply a spatial 
spectral Galerkin discretization as described in 
Section~\ref{Sec:Implementation} with $N=256$ and $\mathcal{I}_N = \{1, \ldots, N\}$.
For the considered $U$-valued $Q$-Wiener process, we always take 
$\mathcal{J}_K = \{1, \ldots, K\}$ for some $K \in \mathbb{N}$ and use $W_t^K$ 
as an approximation for $W_t$, see Section~\ref{Sec:Implementation}.
For each test example, we compare
the $L^2(\Omega,H)$-error of the numerical approximations computed by the 
different schemes under consideration at the final time point $T=1$. 
For the computation of the $L^2(\Omega,H)$-errors, $2000$ independent
realizations of the solution and the numerical approximations are simulated and
then the arithmetic mean is used as an estimator for the expectation of the errors.
Some of the considered SPDEs do not allow for an analytical solution. In case that there 
exists no explicitly given mild solution for the SPDE under consideration, we 
apply the exponential Wagner-Platen type scheme with $M=2^{16}$ equidistant 
time steps in order to obtain a highly accurate approximation that serves as a 
reference solution. In that case, this reference solution is used as a substitution for the unknown
exact solution in the error criterion in order to compare the accuracy and
performance of the numerical schemes.
\subsubsection{Example with an explicitly given solution}
\label{Sec:ExactSolution}
% Entspricht Beispiel 1
%
%
First, we consider an example with multiplicative noise as 
from~\cite[Sec 5.1]{MR3842926} which allows for an exact solution. 
We assume $(W(t))_{t \in [0,T]}$ to be a scalar Brownian motion 
$(\beta_t)_{t\geq 0}$ by setting $U=\mathbb{R}$., i.e., we consider the case
that $W_t=W_t^K$ for $K=1$ with $\eta_1=1$ and eigenfunction $g_1 \equiv 1$. 
Then, the SPDE that we want to solve reads as
\begin{align*} 
	&\mathrm{d}X_t(x) = \Delta X_t(x) \, \mathrm{d}t + X_t(x) \, \mathrm{d}\beta_t,
	&\quad &t>0, \, x \in \DomX, \notag \\
	&X_0(x) = \sqrt{2} \sum_{n\in\mathbb{N}} n^{-4} \sin(n \pi x), &\quad &x \in \DomX, \\
	&X_t(0) = X_t(1) = 0, &\quad &t\geq 0. \notag
\end{align*}
Here, $\kappa=1$ and the mapping 
$b \colon \DomX \times \mathbb{R} \to \mathbb{R}$ is given by
$b(x,v(x)) = v(x)$ for $v \in H$ and $x \in \DomX$. 
For this SPDE, the solution is given by
\begin{equation*}
	X_t(x) = \sqrt{2} \sum_{n\in\mathbb{N}}
	n^{-4} \, e^{-(n^2\pi^2+\frac{1}{2})t+\beta_t} \sin(n\pi x)
\end{equation*}
for $x \in \DomX$ and $t \geq 0$. One can verify for this example that all 
assumptions from 
Section~\ref{Sec:Alg}, Proposition~\ref{Prop:Bounds} as well as 
Theorem~\ref{Prop:Conv} are fulfilled for any $\gamma \in [1,\frac{3}{2})$.
\begin{figure}[tbp] %[H]
	\begin{center}
		\includegraphics[scale = 0.38]{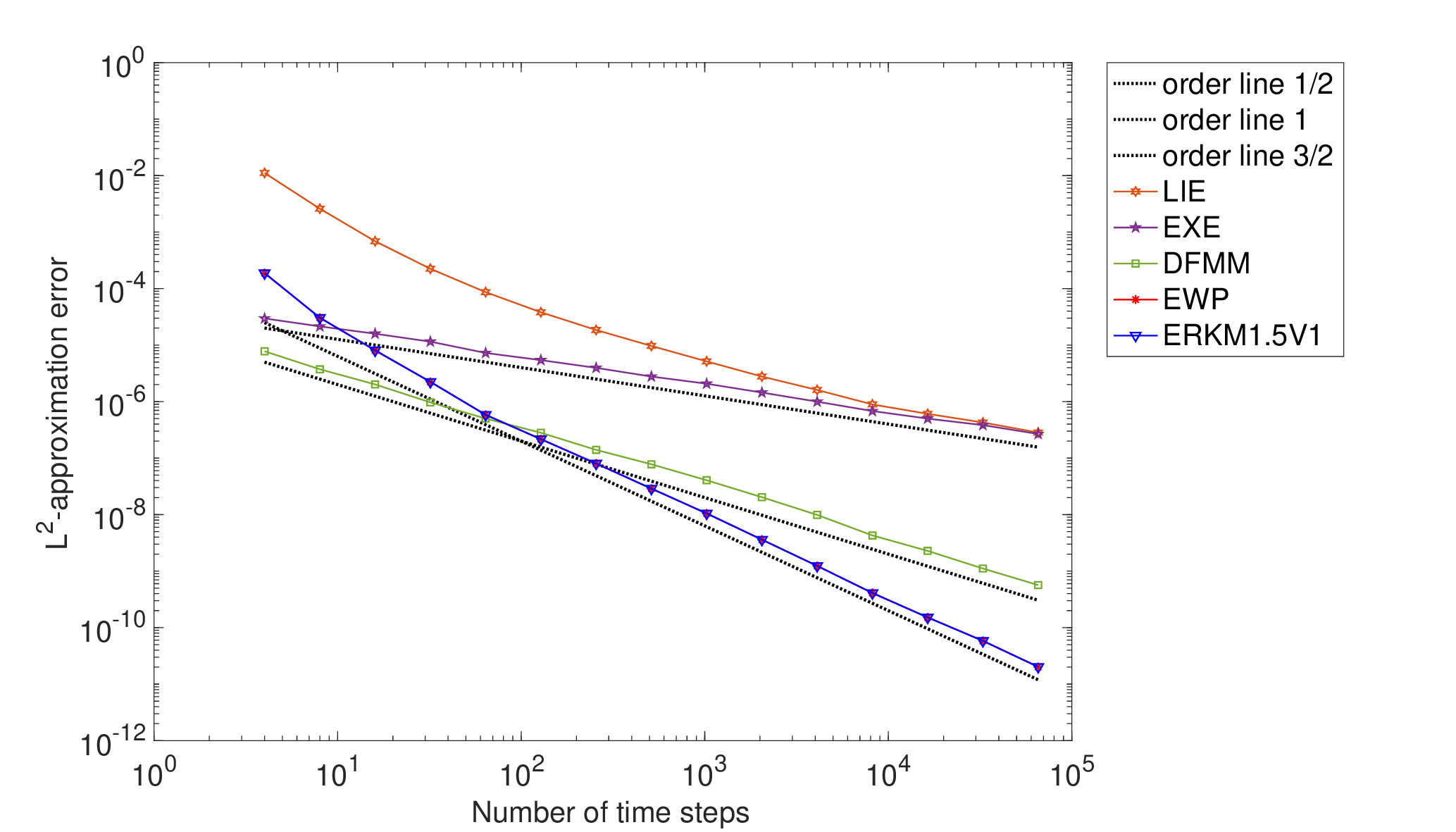}
	\end{center}
	\caption{$L^2(\Omega,H)$-error at time $T=1$ versus number of time steps 
		in $\log$-$\log$-scale
		for $M = 2^l$,  $l\in\{2,3,\cdots,16\}$, for Example~1 in
		Section~\ref{Sec:ExactSolution}.}
	\label{Fig:Ex1}
\end{figure}
For each numerical scheme, we compute numerical 
approximations based on $M = 2^l$ equidistant time steps for 
$l \in \{2,3 \ldots, 16\}$ 
on the time interval $[0,1]$ and compare the corresponding
$L^2(\Omega,H)$-errors.
Since $\gamma \in [1,\frac{3}{2})$ can be arbitrarily chosen, we expect
convergence with order of approximately $\sfrac{3}{2}$ for both, the $\EWP$
scheme and the $\RKTNV$ scheme. Figure~\ref{Fig:Ex1} shows that the 
$\EXE$ and the $\LIE$ scheme have a temporal order
of convergence $\sfrac{1}{2}$, the $\SESRK$ attains order of 
convergence~$1$ and the $\EWP$ scheme as well as the $\RKTNV$ scheme 
achieve the highest temporal order of convergence close to $\sfrac{3}{2}$. 
Both, the $\EWP$ and the $\RKTNV$ scheme yield the same convergence 
results. Thus, as can be seen in Figure~\ref{Fig:Ex1}, we observe that all 
schemes converge with their expected theoretical orders.
\subsubsection{Stochastic heat equation with linear multiplicative noise}
\label{Sec:Example2}
The second example is the stochastic heat equation with multiplicative noise that 
is considered in \cite{MR3534472}. 
Let $U=H$,
let $A=\sfrac{1}{10} \, \Delta$, i.e., $\kappa = \sfrac{1}{10}$, 
and let $Q=(-A)^{-3} \in L(H)$ denote the covariance operator for the
$Q$-Wiener process $(W(t))_{t \in [0,T]}$. 
We consider the following SPDE
\begin{align*}
	&\mathrm{d}X_t(x) = \frac{1}{10} \Delta X_t(x) \,\mathrm{d}t + X_t(x) \, \mathrm{d}W_t(x),
	&\quad &t \in (0,T], \, x \in \DomX, \\
	&X_0(x) = \sin(\pi x), &\quad &x \in \DomX, \\
	&X_t(0) = X_t(1) = 0, &\quad &t \in [0,T].
\end{align*}
Here, the mapping $b \colon \DomX \times \mathbb{R} \to \mathbb{R}$ is 
given by $b(x,v(x)) = v(x)$
%$(B(v)u) (x) = v(x) \cdot u(x)$
for $v \in H$ and $x \in \DomX$.
For this SPDE nearly all assumptions of Section~\ref{Sec:Alg} 
and especially in Proposition~\ref{Prop:Bounds} as well as in
Theorem~\ref{Prop:Conv} can be easily checked to be fulfilled.
Only the assumption $\sup_{v \in H_{\beta}} 
\frac{\| B(v) \|_{HS(U_0,H_{\beta})}}{1+\|v\|_{H_{\beta}}}
< \infty$ is not obviously clear to be fulfilled and we refer to \cite[Ex~1]{MR3534472} 
for a discussion of this condition for $\beta \in (\frac{1}{2},1)$ and 
$\gamma \in [\beta, \beta+\frac{1}{2}) \cap [1,\frac{3}{2})$. 
Note that there exists a unique mild solution to the considered SPDE. 
\begin{figure}[tbp] %[H]
	\begin{center}
		\includegraphics[scale = 0.38]{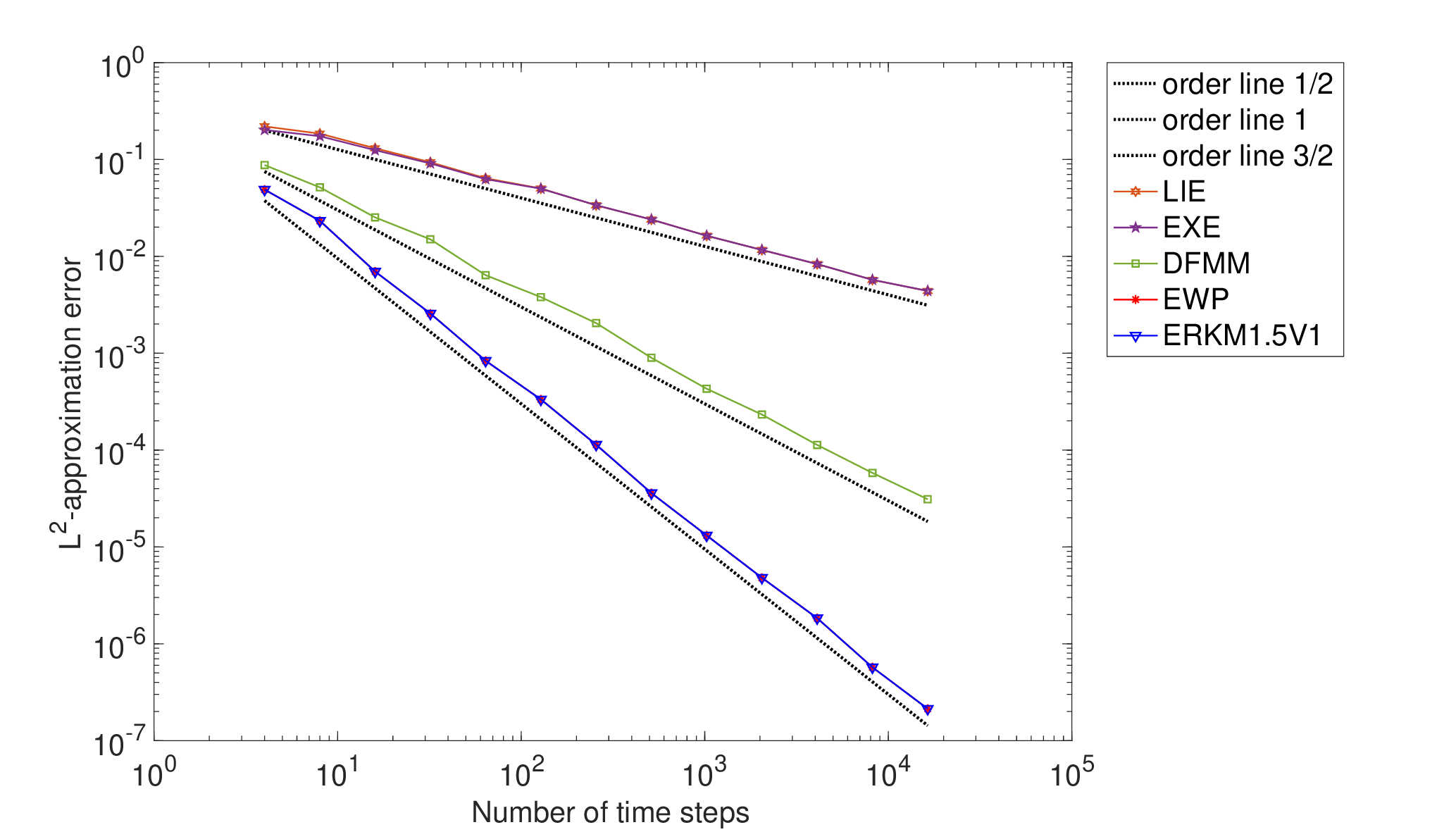}
	\end{center}
	\caption{$L^2(\Omega,H)$-error at $T=1$ versus number of time steps for
		$M = 2^l$, $l\in\{2,3,\cdots,14\}$ for Example~2 in Section~\ref{Sec:Example2}.}
	\label{Fig:Ex2}
\end{figure}

For the numerical simulations, we set $K= 256$ and approximate $W_t$ by 
$W_t^K$. Further, we make use of a reference solution calculated by the exponential
Wagner-Platen type scheme because an explicitly given solution is not known for this 
equation. All considered numerical schemes are applied with step sizes $2^{-l}$
for $l \in \{2,3,\ldots, 14\}$ in order to compare their orders of convergence.
For this example, we expect for the $\EWP$ scheme and the $\RKTNV$ scheme 
a temporal convergence with any order $\gamma \in [1, \frac{3}{2})$, i.e., 
we expect an order up to $\sfrac{3}{2}$. In Figure~\ref{Fig:Ex2}, one observes a 
difference in the temporal order of convergence across the schemes where 
the $\LIE$ and the $\EXE$ scheme attain order $\sfrac{1}{2}$, the $\SESRK$
scheme obtains order $1$ and where the $\EWP$ scheme and the $\RKTNV$ 
scheme achieve an order of $\sfrac{3}{2}$ which confirms the theoretical findings. 
Thus, the $\EWP$ scheme and the $\RKTNV$ scheme yield the same results with
the highest order of convergence compared to the other schemes for this example.
\subsubsection{A nonlinear example}
\label{Sec:Example3}
We choose $U=H$ and we consider an SPDE with operator 
$A = \sfrac{1}{100} \Delta$ and nonlinear coefficients given by
\begin{align*}
	&\mathrm{d}X_t(x) = \Big( \frac{1}{100} \Delta X_t(x) +\sin(X_t(x)) \Big) 
	\, \mathrm{d}t +\cos(X_t(x)) \, \mathrm{d}W_t(x) , &\quad &t \in (0,T], x \in \DomX, \\
	&X_0(x) = \frac{1}{2} \sin(2 \pi x), &\quad &x \in \DomX, \\
	&X_t(0) = X_t(1) = 0, &\quad &t \in [0,T].
\end{align*}
For this equation, it holds $\kappa = \sfrac{1}{100}$ and 
the mappings $f,b \colon \DomX \times \mathbb{R} \to \mathbb{R}$
are defined by $f(x,v(x))=\sin(v(x))$ and $b(x,v(x))=\cos(v(x))$ for $v \in H$ 
and $x \in \DomX$. The covariance operator $Q$ is assumed
to have eigenfunctions $ \{\tilde{e}_j  : j \in \mathbb{N} \}$ with 
$\tilde{e}_j (x)=\sqrt{2} \sin(j \pi x)$
for $x \in \DomX$ and corresponding eigenvalues $\eta_j = j^{-3}$ for $j \in \mathbb{N}$.
Here, we choose $K=256$ for the approximation of the $Q$-Wiener process.
Note that the considered SPDE has a unique mild solution. 
We point out that not all of the assumptions
in Section~\ref{Sec:Alg} and especially in Proposition~\ref{Prop:Bounds} as well as in
Theorem~\ref{Prop:Conv} are fulfilled. Therefore, convergence for the $\EWP$ scheme 
and the $\RKTNV$ scheme is not guaranteed by the theoretical results.
Note that, as for the Wagner-Platen type scheme in~\cite[p.~2399]{MR3534472}, 
a nonlinear multiplication operator $B$ does in general not fulfill the assumptions 
in Section~\ref{Sec:Setting}.

\begin{figure}[tbp] 
	\begin{center}
		\includegraphics[scale = 0.38]{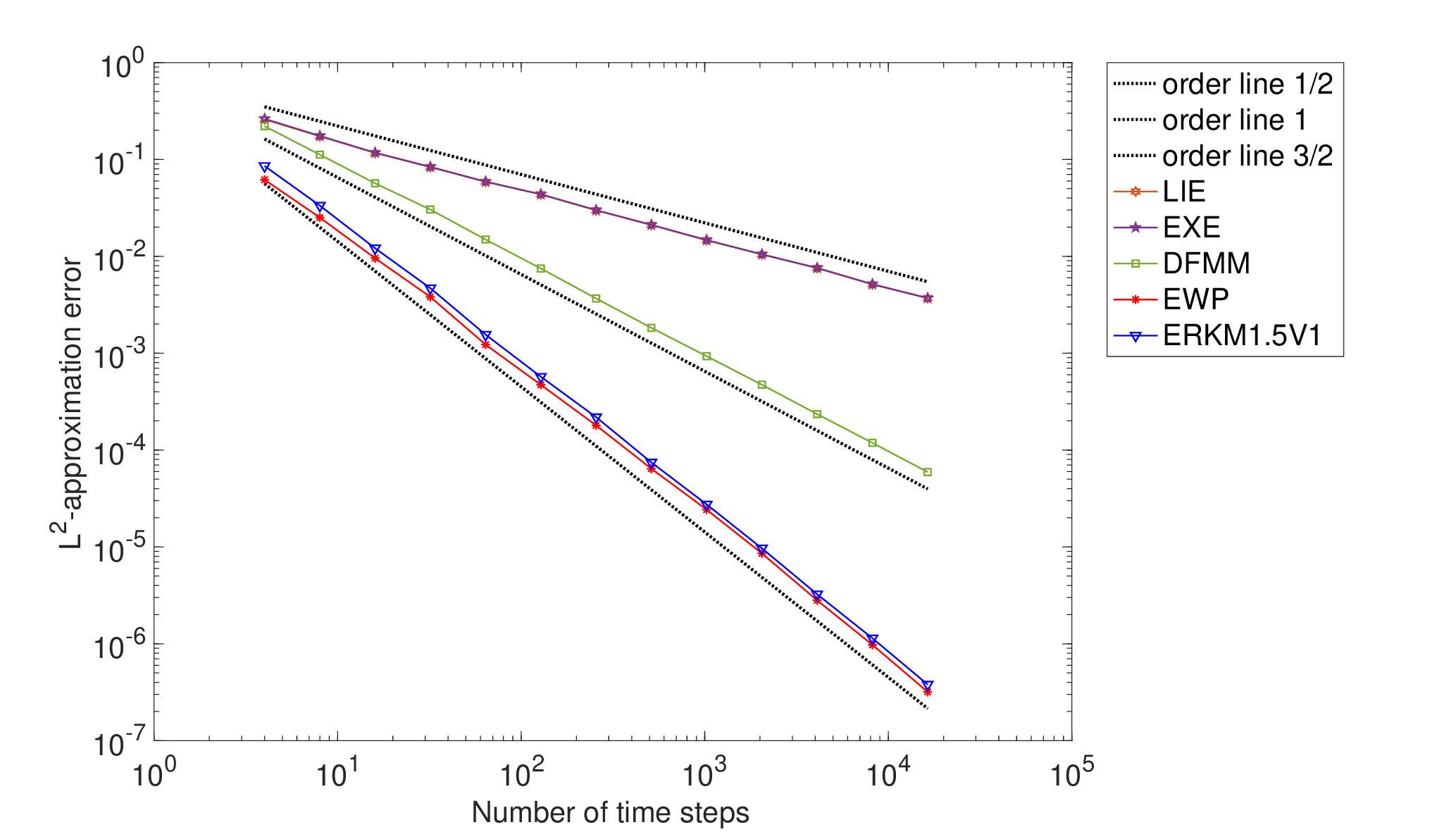}
	\end{center}
	\caption{$L^2(\Omega,H)$-error at $T=1$ versus number of time steps $M = 2^l$, $l\in\{2,3,\cdots,14\}$ 
		for Example 3 in Section~\ref{Sec:Example3}.}
	\label{Fig:Ex3}
\end{figure}

For the equation under consideration there exists no explicitly given solution.
Therefore, the exponential Wagner-Platen type scheme 
is applied for the computation of a reference solution. All considered schemes 
are applied with step sizes $2^{-l}$ for $l \in \{2,3, \ldots, 14\}$ for
comparison of their accuracy. The plot in Figure~\ref{Fig:Ex3} 
indicates convergence for all these schemes which supposes
the theoretical convergence results for the $\EWP$ scheme and the 
$\RKTNV$ scheme not to be sharp concerning the assumptions.
Further, Figure~\ref{Fig:Ex3} nicely confirms the difference in the 
temporal order of convergence for the different schemes.
For this example, the $\LIE$ scheme and the $\EXE$ scheme show temporal
convergence with order $\sfrac{1}{2}$, the $\SESRK$ scheme attains
order $1$ and the $\EWP$ scheme and the $\RKTNV$ scheme both show order
$\sfrac{3}{2}$ in the numerical simulations.
\section{Proofs} \label{Sec:proofs}
In Section~\ref{Sec:Bound}, we first prove the boundedness of 
the approximation process. This in turn is essential for the 
analysis of convergence in Section~\ref{Sec:Convergence}.
%
% --------------
%
Note that since $B$ is a pointwise multiplicative Nemytskii-type 
operator, the following specific first and second commutativity conditions
\begin{equation} \label{eq:comm_con_1}
	\big( B'(v) (B(\tilde{v}) u) \big) \tilde{u} =
	\big( B'(v) (B(\tilde{v}) \tilde{u}) \big) u
\end{equation}
for all $v, \tilde{v} \in H_{\beta}$, $u, \tilde{u} \in U_0$  as well as
\begin{equation} \label{eq:comm_con_2}
	\begin{split}
		&\big( B'(v) \big( B'(\tilde{v}) (B(\hat{v}) u_1) \big) u_2 \big) u_3
		= \big( B'(v) \big( B'(\tilde{v}) ( B(\hat{v}) u_{\pi(1)} ) \big) u_{\pi(2)} 
		\big) u_{\pi(3)} ,\\
		&\big( B''(w) \big( B(\tilde{w} ) u_1, B(\hat{w}) u_2 \big) \big) u_3 
		=  \big( B''(w) \big( B(\tilde{w}) u_{\pi(1)}, B(\hat{w}) u_{\pi(2)} \big)
		\big) u_{\pi(3)}
	\end{split}
\end{equation}
for all $v, \tilde{v}, \hat{v}, w, \tilde{w}, \hat{w} \in H$,
$u_1,u_2,u_3 \in U_0$ and any permutation $\pi \in S_3$ are 
fulfilled. We require these conditions in the proofs in 
Section~\ref{Sec:Bound} and Section~\ref{Sec:Convergence}.
Before we prove Proposition~\ref{Prop:Bounds} and Theorem~\ref{Prop:Conv}, 
we rewrite the numerical schemes in closed form and reformulate them slightly. 
First, we consider two terms that are involved in the family of $\RKTN$ 
schemes~\eqref{RKTN-scheme}--\eqref{RKTN-scheme-stages} given by 
Table~\ref{SRK-scheme-Alg-Coeff}. Due to It\^{o}'s formula we have
\begin{equation}\label{eq:Ito_rewrite}
	\int_{mh}^{(m+1)h} (W_s - W_{mh}) \, \mathrm{d}s 
	= \int_{mh}^{(m+1)h} ((m+1)h-s) \, \mathrm{d}W_s
\end{equation}
for all $m\in\{0,\ldots,M-1\}$ and therefore we can rewrite the 
following two terms
\begin{equation} \label{eq:Ito_rewrite_7}
	A \, b(\cdot,Y_m) \Big( \int_{mh}^{(m+1)h} ( W_s - W_{mh} ) \, \mathrm{d}s
	- \frac{h}{2} \Delta W_m \Big)
	= A \, b(\cdot,Y_m) \int_{mh}^{(m+1)h} ((m+\tfrac{1}{2})h-s) \, \mathrm{d}W_s
\end{equation}
and
\begin{multline} \label{eq:Ito_rewrite_8}
	\frac{1}{\cacht} \Big( b(\cdot,Y_m + \cacht [A Y_m + f(\cdot,Y_m)]) 
	- b(\cdot,Y_m) \Big) \Big(  h \Delta W_m - \int_{mh}^{(m+1)h} (W_s - W_{mh} )
	\, \mathrm{d}s \Big) \\
	= \frac{1}{\cacht} \Big( b(\cdot,Y_m + \cacht [A Y_m + f(\cdot,Y_m)])
	- b(\cdot,Y_m) \Big) \int_{mh}^{(m+1)h} (s-mh) \, \mathrm{d}W_s
\end{multline}
for some $\cacht \in\mathbb{R}\setminus\{0\}$.
Note that we base our proof on a more general form of the exponential 
stochastic Runge-Kutta type schemes which involve coefficients 
$\cdrei, \cvier, \cfuenf, \cacht, \cneun, \czehn, \celf, \aelf \in\mathbb{R}\setminus\{0\}$ 
that may depend on $h$ and thus are more flexible 
(in the following called generalized $\RKTN$ schemes). Then, we show that these
schemes converge with the desired order if we choose the 
coefficients as we did in the derivation of the exponential 
stochastic Runge-Kutta type schemes stated in Section~\ref{Sec:DFM}, 
see Remark~\ref{Rem:CoeffChoice}. One can easily check that the 
schemes are identical in this case.
Here, we also replace \eqref{eq:Ito_rewrite}, \eqref{eq:Ito_rewrite_7} and 
\eqref{eq:Ito_rewrite_8} in the numerical schemes and insert the schemes
iteratively in order to obtain the expression for the approximation process
\begin{subequations} \label{eq:DF_summed}
	\begin{align}
		(\textcolor{black}{\ref{eq:DF_summed}}) \quad \quad 
		Y_m &= \e^{Amh} X_0 + \sum_{l=0}^{m-1} 
		\e^{A(m-l-\tfrac{1}{2})} \bigg\{ h f(\cdot,Y_l) \notag \\
		&\quad + \frac{h^2}{2\cdrei} 
		\Big( f(\cdot,Y_l+\cdrei [A Y_l + f(\cdot,Y_l)]) - f(\cdot,Y_l) \Big) \label{eq:DQsum_3} \\
		&\quad + \frac{1}{\cvier} 
		\Big( f(\cdot, Y_l + \cvier b(\cdot,Y_l)) - f(\cdot,Y_l) \Big) \WdsOK \label{eq:DQsum_4} \\
		&\quad + \frac{h^2}{4\cfuenf^2} 
		\Big( f(\cdot,Y_l+\cfuenf b(\cdot,Y_l)) - 2 f(\cdot,Y_l) 
		+ f(\cdot,Y_l-\cfuenf b(\cdot,Y_l)) \Big) \sumJ g_j^2 \label{eq:DQsum_5} \\
		&\quad + b(\cdot,Y_l) \Delta W_l \notag \\
		&\quad + A \, b(\cdot,Y_l) \int_{lh}^{(l+1)h} ((l+\tfrac{1}{2})h-s) \dd W_s \notag \\
		&\quad + \frac{1}{\cacht} 
		\Big( b(\cdot,Y_l + \cacht [A Y_l + f(\cdot,Y_l)]) - b(\cdot,Y_l) \Big) 
		\int_{lh}^{(l+1)h} (s-lh) \dd W_s \label{eq:DQsum_8} \\
		&\quad + \frac{1}{2\cneun} 
		\bigg( b(\cdot,Y_l+\cneun b(\cdot,Y_l)) - b(\cdot,Y_l) \bigg) 
		\Big( \Delta W_l \Big)^2 \label{eq:DQsum_9} \\
		&\quad + \frac{1}{6\czehn^2} 
		\bigg( b(\cdot,Y_l+\czehn b(\cdot,Y_l)) - 2 b(\cdot,Y_l) 
		+ b(\cdot,Y_l-\czehn b(\cdot,Y_l)) \bigg) 
		\Big( \Delta W_l\Big)^3 \label{eq:DQsum_10} \\
		&\quad + \frac{1}{6\aelf} 
		\bigg( b\Big(\cdot,Y_l + \tfrac{\aelf}{\celf} 
		\Big[ b(\cdot,Y_l + \celf b(\cdot,Y_l)) - b(\cdot,Y_l) \Big]\Big) 
		- b(\cdot,Y_l)\bigg)\Big( \Delta W_l\Big)^3 \label{eq:DQsum_11} \\
		&\quad - \frac{h}{2} \frac{1}{\cneun} 
		\Big( b(\cdot,Y_l +\cneun b(\cdot,Y_l)) - b(\cdot,Y_l) \Big) 
		\sumJ g_j^2 \label{eq:DQsum_12} \\
		&\quad - \frac{1}{2} \frac{1}{\czehn^2} 
		\Big( b(\cdot,Y_l +\czehn b(\cdot,Y_l)\big) 
		- 2 b(\cdot,Y_l) + b(\cdot,Y_l-\czehn b(\cdot,Y_l)) \Big) \notag \\
		&\quad \quad \quad \times \sumJ g_j^2 \WdsOK \label{eq:DQsum_13} \\
		&\quad - \frac{h}{2\aelf} \bigg( b\Big( \cdot,Y_l + \tfrac{\aelf}{\celf}
		\Big[ b(\cdot,Y_l+\celf b(\cdot,Y_l)) 
		- b(\cdot,Y_l)\Big]\Big) -b(\cdot,Y_l)\bigg) \notag \\
		&\quad \quad \quad \times \sumJ g_j^2 \Delta W_l \bigg\} \label{eq:DQsum_14} 
	\end{align}
\end{subequations}
for $m \in \{0,1, \ldots, M\}$.
In the next subsection, we prove the boundedness of this approximation process.
\subsection{Proof of a uniform bound for approximations} 
\label{Sec:Bound}
\begin{proof}[Proof of Proposition~\ref{Prop:Bounds}] 
%
%
%
%%%%%%%%%%%%%%%%%%%%%%% BOUND: TAYLOR %%%%%%%%%%%%%%%%%%%%%%%%%%%%%
%
In the first part of the proof, we employ Taylor expansions at the 
pointwise level in such a way that the terms in approximation~\eqref{eq:DF_summed} 
can be expressed in terms of the mappings $F$ and $B$.  In this way, we rewrite 
the generalized $\RKTN$ schemes in the form that we will analyze. 
\subsubsection*{Taylor expansions}
Let $z \in \DomX$. We consider the terms in~\eqref{eq:DF_summed} separately.
For \eqref{eq:DQsum_3}, we use the Taylor expansion
\begin{multline}\label{eq:Taylor_bound_3}
    \frac{1}{\cdrei} \Big( f\big(z,Y_l(z)
    + \cdrei [A Y_l(z)+f(z,Y_l(z))]\big) - f(z,Y_l(z)) \Big) \\
    = \int_0^1 F'(Y_l+r \cdrei [A Y_l + F(Y_l)]) \dd r 
    \Big( A Y_l + F(Y_l) \Big) (z).  
\end{multline}
Term \eqref{eq:DQsum_4} can be expressed as
\begin{multline}\label{eq:Taylor_bound_4}
    \frac{1}{\cvier} \Big( f\big(z,Y_l(z)+ \cvier b(z,Y_l(z))\big) 
    - f(z,Y_l(z)) \Big) \Wds (z) \\
    = \int_0^1 F'(Y_l+r \cvier b(\cdot,Y_l)) \dd r 
    \bigg( B(Y_l) \WdsOK \bigg) (z)
\end{multline}
and the Taylor expansion for \eqref{eq:DQsum_5} reads as
\begin{multline}\label{eq:Taylor_bound_5}
    \frac{1}{\cfuenf^2} \Big( f\big(z,Y_l(z)
    + \cfuenf b(z,Y_l(z))\big) - 2 f(z,Y_l(z)) + f\big(z,Y_l(z)
    - \cfuenf b(z,Y_l(z))\big) \Big) g_j^2(z) \\
    = \int_0^1 \Big[ F''(Y_l+r \cfuenf b(\cdot,Y_l)) 
    + F''(Y_l-r \cfuenf b(\cdot,Y_l))\Big] (1-r) \dd r 
    \Big( B(Y_l) g_j, B(Y_l) g_j\Big) (z).
\end{multline}
Next, we reformulate \eqref{eq:DQsum_8} as
\begin{multline}\label{eq:Taylor_bound_8}
    \frac{1}{\cacht} \Big( b(z,Y_l(z) 
    + \cacht [A Y_l(z) + f(z,Y_l(z))]) - b(z,Y_l(z)) \Big) 
    \bigg( \int_{lh}^{(l+1)h} (s-lh) \dd W_s \bigg) (z) \\
    = \int_0^1 B'(Y_l + r \cacht [A Y_l + F(Y_l)]) \dd r 
    \Big( A Y_l + F(Y_l) \Big) 
    \bigg( \int_{lh}^{(l+1)h} (s-lh) \dd W_s \bigg) (z).
\end{multline}

The following Taylor expansions are used twice with varying 
$u, \tilde{u} \in U_0$. 
In~\eqref{eq:DQsum_9}, we use $u=\Delta W_l$ and 
in~\eqref{eq:DQsum_12}, we employ $u=g_j$. In both cases, 
we work with the expansion 
\begin{multline}\label{eq:Taylor_bound_912}
    \frac{1}{\cneun} \Big( b\big( z,Y_l(z)+ \cneun b(z,Y_l(z))\big) 
    - b(z,Y_l(z)) \Big) u^2(z) \\
    = \bigg( \int_0^1 B'(Y_l+r \cneun b(\cdot,Y_l)) \dd r 
    \Big( B(Y_l)u \Big) u \bigg) (z).
\end{multline}
Then, we consider Taylor expansions in~\eqref{eq:DQsum_10} 
with $u=\tilde{u}=\Delta W_l$ and in~\eqref{eq:DQsum_13} 
with $u=g_j$ and $\tilde{u}=\WdsOK$. We write
\begin{multline}\label{eq:Taylor_bound_1013}
    \frac{1}{\czehn^2} \Big( b\big(z,Y_l(z)+\czehn b(z,Y_l(z))\big) 
    - 2 b(z,Y_l(z)) + b\big(z,Y_l(z)-\czehn b(z,Y_l(z))\big) \Big) 
    u^2(z) \tilde{u}(z) \\
    = \bigg( \int_0^1 \Big[ B''(Y_l+r \czehn b(\cdot,Y_l)) 
    + B''(Y_l-r \czehn b(\cdot,Y_l))\Big] (1-r) \dd r 
    \Big( B(Y_l)u, B(Y_l)u \Big) \tilde{u} \bigg) (z).
\end{multline}
Finally, we consider the last two terms.
We compute the Taylor expansion in~\eqref{eq:DQsum_11} with 
$u=\tilde{u}=\Delta W_l$ and in~\eqref{eq:DQsum_14} with $u=g_j$ 
and $\tilde{u}=\Delta W_l$. This results in
\begin{multline}\label{eq:Taylor_bound_1114}
	\frac{1}{\aelf} \bigg( b\big( z,Y_l(z)
	+ \tfrac{\aelf}{\celf}\big[ b(z,Y_l(z)
	+ \celf b(z,Y_l(z))) -b(z,Y_l(z))\big]\big) 
	- b(z,Y_l(z)) \bigg) u^2(z) \tilde{u}(z) \\
    \shoveleft{=\frac{1}{\celf} \int_0^1 b_y \big(z,Y_l(z)
    	+r \tfrac{\aelf}{\celf} 
    	\big[ b(z,Y_l(z)+\celf b(z,Y_l(z)))-b(z,Y_l(z))\big]\big)}  \\
    \shoveright{\Big[ b(z,Y_l(z)+\celf b(z,Y_l(z)))
    	-b(z,Y_l(z))\Big] \dd r 
    	\; u^2(z) \tilde{u}(z)} \\
	\shoveleft{=\int_0^1 b_y\big(z,Y_l(z)+r \tfrac{\aelf}{\celf} 
		\big[ b(z,Y_l(z)+\celf b(z,Y_l(z)))
		-b(z,Y_l(z))\big]\big) \dd r}  \\
	\shoveright{\int_0^1 b_y( z,Y_l(z)
		+ r \celf b(z,Y_l(z))) b(z,Y_l(z)) \dd r 
		\; u^2(z) \tilde{u}(z)}  \\
	\shoveleft{=\Big( \int_0^1 B'\big( Y_l 
		+ r \tfrac{\aelf}{\celf} \big[ b(\cdot,Y_l+\celf b(\cdot,Y_l))
		-b(\cdot,Y_l)\big]\big) \dd r} \\
	\Big( \int_0^1 B'(Y_l+r\celf b(\cdot,Y_l)) \dd r 
	\Big( B(Y_l) u \Big) u \Big) \tilde{u} \Big) (z).
\end{multline}
\\ \\

%
%%%%%%%%%%%%%%%%%%%%%%% BOUND: COMMUTATIVITY %%%%%%%%%%%%%%%%%%%%%%%%%%%%%
In our setting of pointwise multiplicative Nemytskii-type operators, the commutativity 
condition stated in~\eqref{eq:comm_con_1} obviously holds. 
Therewith, we can rewrite the expressions
in~\eqref{eq:Taylor_bound_912} in terms of iterated stochastic 
integrals, see Lemma~\ref{Lem:CC1},
\begin{multline} \label{eq:comm_bound_rewrite_912}
	\frac{1}{2} \int_0^1  B'(Y_l+r \cneun b(\cdot,Y_l)) \dd r 
	\Big( B(Y_l)\Delta W_l \Big) \Delta W_l \\
	- \frac{h}{2} \sumJ \int_0^1  B'(Y_l+r \cneun b(\cdot,Y_l)) \dd r 
	\Big( B(Y_l) g_j \Big) g_j \\
	= \int_{lh}^{(l+1)h} \int_0^1  B'(Y_l+r \cneun b(\cdot,Y_l)) \dd r 
	\bigg( \int_{lh}^s B(Y_l) \dd W_u \bigg) \dd W_s.
\end{multline}
Similarly, we can employ the fact that the second commutativity 
condition~\eqref{eq:comm_con_2} is fulfilled and use Lemma~\ref{lem:CC2}.
We get
\begin{align}
	\label{eq:comm_bound_rewrite_10111314}
	&\frac{1}{6} \int_0^1 \Big[ B''(Y_l+r \czehn b(\cdot,Y_l)) 
	+ B''(Y_l -r \czehn b(\cdot,Y_l))\Big] (1-r) \dd r
	\Big( B(Y_l) \Delta W_l, B(Y_l) \Delta W_l \Big) \Delta W_l \nonumber \\
	&\quad +\frac{1}{6} \int_0^1 B' \big( Y_l + r \tfrac{\aelf}{\celf} 
	\big[ b(\cdot,Y_l+\celf b(\cdot,Y_l)) -b(\cdot,Y_l) \big] \big) \dd r \nonumber \\
	& \quad \quad \Big( \int_0^1 B'(Y_l +r\celf b(\cdot,Y_l)) \dd r 
	\Big( B(Y_l) \Delta W_l \Big) \Delta W_l \Big) \Delta W_l \nonumber \\
	&\quad -\frac{1}{2} \sumJ \int_0^1 
	\Big[ B''(Y_l +r \czehn b(\cdot,Y_l)) + B''(Y_l -r \czehn b(\cdot,Y_l)) \Big]
	(1-r) \dd r \nonumber \\
	&\quad \quad \Big( B(Y_l) g_j, B(Y_l) g_j \Big) \WdsOK \nonumber \\
	&\quad -\frac{h}{2} \sumJ \int_0^1 B'\big( Y_l + r \tfrac{\aelf}{\celf} 
	\big[ b( \cdot, Y_l +\celf b(\cdot,Y_l)) -b(\cdot,Y_l) \big] \big) \dd r \nonumber \\
	&\quad \quad  \Big( \int_0^1 
	B'(Y_l +r\celf b(\cdot,Y_l)) \dd r \Big( B(Y_l) g_j \Big) g_j \Big) \Delta W_l \nonumber \\
	&= \frac{1}{2} \int_{lh}^{(l+1)h} \int_0^1 
	\Big[ B''(Y_l +r \czehn b(\cdot,Y_l)) + B''(Y_l-r \czehn b(\cdot,Y_l))\Big] 
	(1-r) \dd r \nonumber \\
	&\quad \quad \bigg( \int_{lh}^s B(Y_l) \dd W_u, \int_{lh}^s B(Y_l) \dd W_u \bigg)
	\dd W_s \nonumber \\
	&\quad + \int_{lh}^{(l+1)h} \int_0^1 
	B'\big( Y_l + r \tfrac{\aelf}{\celf} 
	\big[ b(\cdot,Y_l +\celf b(\cdot,Y_l)) -b(\cdot,Y_l) \big] \big) \dd r \nonumber \\
	&\quad \quad  \bigg( \int_{lh}^s \int_0^1 B'(Y_l +r\celf b(\cdot,Y_l)) \dd r 
	\bigg( \int_{lh}^u B(Y_l) \dd W_v \bigg) \dd W_u \bigg) \dd W_s .
\end{align}

Now, we employ the Taylor expansions~\eqref{eq:Taylor_bound_3}--\eqref{eq:Taylor_bound_1114} 
and rewrite the generalized $\RKTN$ schemes~\eqref{eq:DF_summed} 
with the help of~\eqref{eq:comm_bound_rewrite_912} 
and~\eqref{eq:comm_bound_rewrite_10111314}. This leads to 
\begin{equation*}
	Y_m = \e^{Amh} Y_0 + \sum_{l=0}^{m-1} \e^{A(m-l-\tfrac{1}{2})h} 
	\, \hat{\Upsilon}_l^F 
	+ \int_0^{mh} \sum_{l=0}^{m-1} \ind_{[lh,(l+1)h)}(s) 
	\, \e^{A(m-l-\tfrac{1}{2})h} \, \hat{\Upsilon}_l^B(s) \, \mathrm{d}W_s
\end{equation*}
where
\begin{align*}
	\hat{\Upsilon}_l^F &= h F(Y_l) 
	+ \frac{h^2}{2} \int_0^1 F'(Y_l + r \cdrei [A Y_l + F(Y_l)]) \dd r 
	\Big( A Y_l + F(Y_l) \Big) \\
	&\quad + \int_0^1 F'(Y_l+r \cvier b(\cdot,Y_l)) \dd r 
	\bigg( B(Y_l) \WdsOK \bigg) \\
	&\quad + \frac{h^2}{4} \sumJ \int_0^1 \Big[ F''(Y_l+r \cfuenf b(\cdot,Y_l)) 
	+ F''(Y_l-r \cfuenf b(\cdot,Y_l))\Big] (1-r) \dd r 
	\Big( B(Y_l) g_j, B(Y_l) g_j\Big) 
\end{align*}
and 
\begin{align*}
	\hat{\Upsilon}_l^B(s) &= B(Y_l) + A B(Y_l) ((l+\tfrac{1}{2})h-s) \\
	&\quad + \int_0^1 B'(Y_l + r \cacht [AY_l + F(Y_l)]) \dd r 
	\Big( A Y_l + F(Y_l) \Big) (s-lh) \\
	&\quad + \int_0^1  B'(Y_l+r \cneun b(\cdot,Y_l)) \dd r 
	\bigg( \int_{lh}^s B(Y_l) \dd W_u \bigg) \\ 
	&\quad + \frac{1}{2} \int_0^1 \Big[ B''(Y_l+r \czehn b(\cdot,Y_l)) 
	+ B''(Y_l-r \czehn b(\cdot,Y_l))\Big] (1-r) \dd r \\
	&\quad \quad \bigg( \int_{lh}^s B(Y_l) \dd W_u, 
		\int_{lh}^s B(Y_l) \dd W_u \bigg) \\
	&\quad + \int_0^1 B'\big( Y_l + r \tfrac{\aelf}{\celf} 
	\big[ b(\cdot,Y_l+\celf b(\cdot,Y_l))-b(\cdot,Y_l)\big]\big) \dd r \\
	&\quad \quad \bigg( \int_{lh}^s 
		\int_0^1 B'(Y_l+r\celf b(\cdot,Y_l)) \dd r 
		\bigg( \int_{lh}^u B(Y_l) \dd W_v \bigg) \dd W_u \bigg) .
\end{align*}
We work with this expression in order to prove the boundedness 
of the moments. In the following, we estimate the terms on the right
hand side of
\begin{align*}
	\normLpH{p}{\gamma}{Y_m}^2 &\leq 3 \normALpH{\big}{p}{\gamma}{\e^{Amh} Y_0}^2 
	+ 3 \normALpH{\Big}{p}{\gamma}{\sum_{l=0}^{m-1} 
		\e^{A(m-l-\tfrac{1}{2})h} \hat{\Upsilon}_l^F}^2 \\
	&\quad + 3 \normALpH{\Big}{p}{\gamma}{\int_0^{mh} \sum_{l=0}^{m-1} 
		\ind_{[lh,(l+1)h)}(s) \, \e^{A(m-l-\tfrac{1}{2})h}  \hat{\Upsilon}_l^B(s) \dd W_s}^2 
\end{align*}
separately in order to prove the assertion.
%
%
%
%
%%%%%%%%%%%%%%%%%%%%%%% BOUND: PRELIMINARIES %%%%%%%%%%%%%%%%%%%%%%%%%%%%%
%
%
\subsubsection{Preliminaries}
\label{Sec:Preliminaries}
In this section, we give some estimates that are needed throughout the proof. 
First of all, note that from the assumptions it follows that $\alpha>0$, 
$\beta \in (\tfrac{1}{2}, \tfrac{3}{2})$, 
$0< \delta \leq \beta \leq \gamma$, 
$\beta-\gamma \leq 0$, $\delta-\beta \leq 0$, $\alpha-\gamma \leq 0$ and
$\gamma-\beta \in [0, \tfrac{1}{2})$. 

For the proof of bounded moments of the approximation process, 
the following estimates, denoted as (P1)--(P4), will be used in the sequel.
For expressions (P1)--(P3), we employ the linear growth of $B$ in the 
$H_{\beta}$-Norm as well as the properties of operator $A$. Estimate (P4)~is 
then obtained with the help of the linear growth of $F$ in the 
$H_{\alpha}$-Norm. Thus, it holds that
\begin{align*}
	&\text{(P1)} \quad &\normLpHS{p}{\beta}{B(Y_l)} 
    &\leq \normALpR{\Big}{1+\etaA{\beta-\gamma} \norm{Y_l}_{H_{\gamma}}} \\
    & & &\leq C(K) \Big( 1 + \normLpH{p}{\gamma}{Y_l} \Big) \\
	&\text{(P2)} \quad &\normLpHS{p}{\delta}{B(Y_l)} 
    &\leq \etaA{\delta-\beta} \normALpR{\Big}{1+\etaA{\beta-\gamma} \norm{Y_l}_{H_{\gamma}}} \\
    & & &\leq C(K) \Big( 1 + \normLpH{p}{\gamma}{Y_l} \Big) \\ 
	&\text{(P3)} \quad &\normLpHS{p}{}{B(Y_l)}
    &\leq \etaA{-\frac{\beta}{2}} 
    \normALpR{\Big}{1+\etaA{\frac{\beta}{2}-\gamma} \norm{Y_l}_{H_{\gamma}}} \\
	& & &\leq C(K) \Big( 1 + \normLpH{p}{\gamma}{Y_l} \Big) \\
	&\text{(P4)} \quad &\normLpH{p}{}{AY_l+F(Y_l)}
    &\leq \Big( \etaA{1-\gamma} + \eta \etaA{-\gamma/2}^2 \Big) \normLpH{p}{\gamma}{Y_l} \\
	& & & \quad + \etaA{-\frac{\alpha}{2}}^2 \normLpR{1+\etaA{\alpha-\gamma} \norm{Y_l}_{H_{\gamma}}} \\
	& & &\leq C(K) \Big( 1 + \normLpH{p}{\gamma}{Y_l} \Big)
\end{align*}
where we note that
$\beta-\delta \in [0,1)$, 
$\frac{\beta}{2} \in (\frac{\gamma}{2}-\frac{1}{4}, \frac{\gamma}{2}]
\subseteq (\frac{1}{4}, \frac{3}{4})$, 
$\gamma-\frac{\beta}{2}\in[\frac{\gamma}{2},\frac{\gamma}{2}+\frac{1}{4})\subset [\frac{1}{2},1)$,
$\gamma-1 \in [0,\frac{1}{2})$,
$\frac{\gamma}{2} \in [\frac{1}{2}, \frac{3}{4})$, 
$\frac{\alpha}{2} \in(\frac{\gamma-1}{2}, \frac{\gamma}{2}] \subseteq (0,\frac{3}{4})$
and
$\gamma-\alpha \in [0,1)$.
%
%
%
%%%%%%%%%%%%%%%%%%%%%%% BOUND: TERME MIT F %%%%%%%%%%%%%%%%%%%%%%%%%%%%%
%
%
\subsubsection{Expressions containing the mapping $F$}
\label{Sec:F}
Let $m \in \{ 1, \ldots, M \}$.
By the assumption on the semigroup and with the Cauchy-Schwarz inequality, 
we compute
\begin{align*}
	&\normALpH{\Big}{p}{\gamma}{\sum_{l=0}^{m-1} \e^{A(m-l-\tfrac{1}{2})h}
	\, \hat{\Upsilon}_l^F}^2 \\
	&\leq \bigg( \sum_{l=0}^{m-1} \etaAe{\theta}
	\normALpH{\big}{p}{\gamma-\theta}{\e^{\tfrac{A}{2}(m-l-\tfrac{1}{2})h}
	\, \hat{\Upsilon}_l^F} \bigg)^2 \\
	&\leq \bigg( \sum_{l=0}^{m-1} \big(
	\tfrac{1}{2} (m-l-\tfrac{1}{2}) h\big)^{-\theta} h \bigg)
	\bigg( \sum_{l=0}^{m-1} \big(\tfrac{1}{2} (m-l-\tfrac{1}{2})
	h \big)^{-\theta} h^{-1} \,
	\normALpH{\big}{p}{\gamma-\theta}{\e^{\tfrac{A}{2}(m-l-\tfrac{1}{2})h}
	\, \hat{\Upsilon}_l^F}^2 \bigg).
\end{align*}
For now, we drop the sum and concentrate on the summand. 
Taking into account that $\gamma -\theta -\alpha \leq 0$, we use the 
properties of the semigroup again. In the second step, we 
employ~\eqref{eq:Ito_rewrite} to rewrite the stochastic integral
and we get
\begin{align*}
     &\normALpH{\Big}{p}{\gamma-\theta}{\e^{\tfrac{A}{2}(m-l-\tfrac{1}{2})h} \hat{\Upsilon}_l^F}^2 \\
     &\leq 4 h^2 \etaA{\gamma-\theta-\alpha}^2 
     \normA{\Big}{\e^{\tfrac{A}{2}(m-l-\tfrac{1}{2})h}}_{L(H)}^2 \normLpH{p}{\alpha}{F(Y_l)}^2 \\
     &\quad + 4 \etaAe{\gamma-\theta}^2 \\
     &\qquad  \cdot \bigg( \frac{h^4}{4} \normLpH{p}{}{\int_0^1 F'(Y_l + r \cdrei [A Y_l + F(Y_l)]) \dd r 
     	\Big( A Y_l + F(Y_l) \Big)}^2 \\
     &\quad +\bigg\lVert \int_0^1 F'(Y_l+r \cvier b(\cdot,Y_l)) \dd r \bigg( B(Y_l) \WdsOK
      \bigg) \bigg\rVert_{L^p(\Omega;H)}^2 \\
     &\quad +\frac{h^4}{16} \bigg\lVert \sumJ \int_0^1 \Big[ F''(Y_l+r \cfuenf b(\cdot,Y_l)) 
     	+ F''(Y_l-r \cfuenf b(\cdot,Y_l))\Big] (1-r) \dd r \\
     &\quad \qquad \Big( B(Y_l) g_j, B(Y_l) g_j\Big)\bigg\rVert_{L^p(\Omega;H)}^2 \bigg) \\
     &\leq C(K) h^2 \normLpH{p}{\alpha}{F(Y_l)}^2 
     +C(K) ( \tfrac{1}{2}
     (m-l-\tfrac{1}{2})h)^{-2(\gamma -\theta)} \\
     & \qquad  \cdot \bigg( h^4 \normALpR{\bigg}{\int_0^1 \norm{F'(Y_l + r \cdrei [A Y_l + F(Y_l)])}_{L(H)} \dd r 
     	\norm{A Y_l + F(Y_l)}_H}^2 \\
     &\quad +\normALpR{\bigg}{\int_0^1 \norm{F'(Y_l+r \cvier b(\cdot,Y_l))}_{L(H)} \dd r 
     	\normA{\Big}{\int_{lh}^{(l+1)h} ((l+1)h-s) B(Y_l) \dd W_s}_H}^2 \\
     &\quad + h^4 \Big\lVert \int_0^1 \norm{F''(Y_l+r \cfuenf b(\cdot,Y_l)) 
     	+ F''(Y_l-r \cfuenf b(\cdot,Y_l))}_{L^{(2)}(H,H)} (1-r) \dd r \\ 
     &\qquad  \cdot \norm{B(Y_l)}_{HS(U_0,H)}^2 \Big\rVert_{L^p(\Omega; \mathbb{R} )}^2 \bigg).
\end{align*}
Next, we make use of the boundedness of $F'(v)$ in $L(H)$ and 
the linear growth of the product of $F''$ and $B$. This yields
\begin{align*}
%  \begin{split}
     &\normALpH{\Big}{p}{\gamma-\theta}{\e^{\tfrac{A}{2}(m-l-\tfrac{1}{2})h}
     \hat{\Upsilon}_l^F}^2 \\
     &\leq C(K) h^2 \normLpH{p}{\alpha}{F(Y_l)}^2
     + C(K) (\tfrac{1}{4} h)^{-2(\gamma -\theta)}
     \bigg( h^4 \normLpH{p}{}{A Y_l + F(Y_l)}^2 \\
     &\quad +\normALpH{\Big}{p}{}{\int_{lh}^{(l+1)h} ((l+1)h-s) B(Y_l) 
     \dd W_s}^2 \\
     &\quad + h^4 \normALpR{\Big}{\int_0^1 2 \Big( 1  +  \norm{Y_l}_H 
     + r \, | \cfuenf |  \norm{b(\cdot, Y_l)}_H\Big) (1-r) \dd r}^2 \bigg).
%  \end{split}
\end{align*}
By \cite[Thm 4.37]{MR3236753} and the computations~(P3) and~(P4) 
in Subsection~\ref{Sec:Preliminaries}, we obtain due to 
$\gamma-\theta \in (0,\tfrac{1}{2}]$ that
\begin{align*}
%  \begin{split}
     &\normALpH{\Big}{p}{\gamma-\theta}{\e^{\tfrac{A}{2}(m-l-\tfrac{1}{2})h} \hat{\Upsilon}_l^F}^2 \\
     &\leq C(K) h^2 \normLpH{p}{\alpha}{F(Y_l)}^2
     + C(K) h^{-2(\gamma -\theta)}
     \bigg( h^4 \normLpH{p}{}{A Y_l + F(Y_l)}^2 \\
     &\quad +  \frac{1}{3} h^3 \bigg( \frac{p(p-1)}{2} \bigg)^{\frac{p}{2}}  \normLpHS{p}{}{B(Y_l)}^2
     + h^4 \big( 1 + \| Y_l \|_{L^p(\Omega; H_{\gamma})}  \big)^2 \bigg) \\     
     &\leq C(K) h^2 \, \big( 1 + h^{2-2(\gamma -\theta)} + h^{1 -2(\gamma -\theta)} \big) 
     \Big( 1 + \normLpH{p}{\gamma}{Y_l}^2 \Big) \\
     &\leq C(K) h^2 \, \Big( 1 + \normLpH{p}{\gamma}{Y_l}^2 \Big).
%  \end{split}
\end{align*}
%
%
%
%%%%%%%%%%%%%%%%%%%%%%% BOUND: TERME MIT B %%%%%%%%%%%%%%%%%%%%%%%%%%%%%
%
%
%
\subsubsection{Expressions containing operator $B$}
\label{Sec:B}
In this section, again we fix some $m \in \{1, \ldots, M\}$. We use
\cite[Thm 4.37]{MR3236753} and the properties of the indicator function 
to obtain due to $\gamma-\frac{\theta}{2} \in (\frac{1}{2},1)$ and
$\gamma-\frac{\theta}{2} \leq \beta$ that 
\begin{align*}
	&\normALpH{\bigg}{p}{\gamma}{\int_0^{mh} \sum_{l=0}^{m-1} \ind_{[lh,(l+1)h)}(s) 
		\, \e^{A(m-l-\tfrac{1}{2})h} \hat{\Upsilon}_l^B (s) \dd W_s}^2 \\ 
	&= \normALpH{\bigg}{p}{}{
	(\eta-A)^{\gamma-\frac{\theta}{2}} \sum_{l=0}^{m-1} (\eta-A)^{\frac{\theta}{2}}
	\e^{\frac{A}{2}(m-l-\tfrac{1}{2})h}
	\int_0^{mh} \ind_{[lh,(l+1)h)}(s) \, \e^{\frac{A}{2}(m-l-\tfrac{1}{2})h} 
	\, \hat{\Upsilon}_l^B (s) \dd W_s}^2 \\
	&= \normALpH{\bigg}{p}{\gamma-\theta/2}{
	\int_0^{mh} \sum_{l=0}^{m-1} (\eta-A)^{\frac{\theta}{2}}
	\e^{A(m-l-\tfrac{1}{2})h} \ind_{[lh,(l+1)h)}(s) \, 
	\hat{\Upsilon}_l^B (s) \dd W_s}^2 \\
	&\leq C(K) \int_0^{mh} \normALpHS{\Big}{p}{\gamma-\theta/2}{
	\sum_{l=0}^{m-1} \ind_{[lh,(l+1)h)}(s) \, (\eta-A)^{\frac{\theta}{2}} \,
	\e^{A(m-l-\tfrac{1}{2})h} \, \hat{\Upsilon}_l^B (s) }^2 \dd s \\
	&\leq C(K) \int_0^{mh} \sum_{l=0}^{m-1} \ind_{[lh,(l+1)h)}(s) \, 
	\Big\| (\eta-A)^{\frac{\theta}{2}} \, \e^{A \frac{1}{2}(m-l-\tfrac{1}{2})h}
	\Big\|_{L(H)}^2 \\
	&\quad \cdot \normALpHS{\Big}{p}{\gamma-\theta/2}{
	\e^{A \frac{1}{2}(m-l-\tfrac{1}{2})h} \, \hat{\Upsilon}_l^B (s) }^2 \dd s \\
	&\leq C(K) \sum_{l=0}^{m-1} 2^{\theta} \big( (m-l-\tfrac{1}{2}) h\big)^{-\theta} \int_{lh}^{(l+1)h} \normALpHS{\Big}{p}{\gamma-\theta/2}{\e^{\tfrac{A}{2}(m-l-\tfrac{1}{2})h} \hat{\Upsilon}_l^B (s)}^2 \dd s .
\end{align*}
Using $\gamma-\frac{\theta}{2}-\beta \leq 0$, the integrand on the right hand side can be further estimated as follows
\begin{align*}
	&\normALpHS{\Big}{p}{\gamma-\theta/2}{\e^{\tfrac{A}{2}(m-l-\tfrac{1}{2})h}
   	\hat{\Upsilon}_l^B(s)}^2 \\
    &\leq 6 \bigg[ \etaA{\gamma -\theta/2 -\beta}^2 
    \Big\| \e^{\tfrac{A}{2}(m-l-\tfrac{1}{2})h} \Big\|_{L(H)}^2
    \normALpHS{\big}{p}{\beta}{B(Y_l)}^2 \\
    &\quad + ((l+\tfrac{1}{2})h-s)^2 \etaA{\gamma -\theta/2 -\beta}^2
    \normA{\Big}{A \e^{\tfrac{A}{2}(m-l-\tfrac{1}{2})h}}_{L(H)}^2
    \normALpHS{\big}{p}{\beta}{B(Y_l)}^2 \\
    &\quad + \etaAe{\gamma-\theta/2-\delta}^2 \\
    &\qquad \cdot \normALpHS{\bigg}{p}{\delta}{\int_0^1  B'(Y_l
    +r \cneun b(\cdot,Y_l)) \dd r \bigg( \int_{lh}^s B(Y_l) \dd W_u \bigg)}^2 \\
    &\quad + \etaAe{\gamma-\theta/2}^2 \\
    &\qquad \cdot \bigg( (s-lh)^2 \normALpR{\bigg}{\int_0^1 \norm{B'(Y_l 
    + r \cacht [AY_l + F(Y_l)])}_{L(H,HS(U_0,H))} \dd r \norm{A Y_l + F(Y_l)}_H}^2 \\
    &\quad + \frac{1}{4}
    \bigg\lVert \int_0^1 \norm{B''(Y_l+r \czehn b(\cdot,Y_l)) + B''(Y_l
    -r \czehn b(\cdot,Y_l))}_{L^{(2)}(H,HS(U_0,H))} (1-r) \dd r \\
    &\qquad \cdot  \norm{\int_{lh}^s B(Y_l) \dd W_u}_H^2
    \bigg\rVert_{L^p(\Omega;\mathbb{R})}^2 \\
    &\quad + \bigg\lVert \int_0^1 \norm{B'\big( Y_l + r \tfrac{\aelf}{\celf} 
    \big[ b(\cdot,Y_l+\celf b(\cdot,Y_l))-b(\cdot,Y_l)\big]\big)}_{L(H,HS(U_0,H))} 
	\dd r \\
    &\qquad \cdot  \normA{\bigg}{\int_{lh}^s \int_0^1 B'(Y_l+r\celf b(\cdot,Y_l)) \dd r 
    	\bigg( \int_{lh}^u B(Y_l) \dd W_v \bigg) \dd W_u}_H \bigg\rVert_{L^p(\Omega;\mathbb{R})}^2 \bigg) 
    \bigg] .
\end{align*}
By the properties of the semigroup, due to the boundedness of $B'$ and 
making use of $\gamma-\theta/2-\delta = \max (0, \gamma-\theta/2-\delta) + \min(0, \gamma-\theta/2-\delta)$, 
we compute
\begin{align*}
	&\normALpHS{\Big}{p}{\gamma-\theta/2}{\e^{\tfrac{A}{2}(m-l-\tfrac{1}{2})h}
	\hat{\Upsilon}_l^B(s)}^2 \\
    &\leq C(K) \Big( 1 + ((l+\tfrac{1}{2})h-s)^2 h^{-2} \Big)
    \normALpHS{\big}{p}{\beta}{B(Y_l)}^2 \\
    &\quad + C(K) h^{-2 \, \max(0,\gamma-\frac{\theta}{2}-\delta) }
    \normALpH{\Big}{p}{\delta}{\int_{lh}^s B(Y_l) \dd W_u}^2 \\
    &\quad + C(K) h^{2(\gamma-\theta/2)}
    \bigg( (s-lh)^2 \normALpH{\big}{p}{}{A Y_l + F(Y_l)}^2 \\
    &\quad + \Big\lVert \int_{lh}^s \Big( \int_0^1 \norm{B''(Y_l
    +r \czehn b(\cdot,Y_l)) + B''(Y_l-r \czehn b(\cdot,Y_l))}_{L^{(2)}(H,HS(U_0,H))}
	(1-r) \dd r \Big)^{1/2} \\
	&\qquad  B(Y_l) \dd W_u \Big\rVert_{L^{2p}(\Omega;H)}^4 \\
    &\quad + \normALpH{\Big}{p}{}{\int_{lh}^s \int_0^1 B'(Y_l+r\celf b(\cdot,Y_l)) 
    \dd r \bigg( \int_{lh}^u B(Y_l) \dd W_v \bigg) \dd W_u}^2 \bigg). 
\end{align*}
Now, we apply~\cite[Thm 4.37]{MR3236753} again. Moreover, we
employ the assumption on the product of $B''$ and $B$ to obtain
\begin{align*}
	&\normALpHS{\Big}{p}{\gamma-\theta/2}{\e^{\tfrac{A}{2}(m-l-\tfrac{1}{2})h}
	\hat{\Upsilon}_l^B(s)}^2 \\
    &\leq C(K) \Big( 1 + ((l+\tfrac{1}{2})h-s)^2 h^{-2} \Big)
    \normALpHS{\big}{p}{\beta}{B(Y_l)}^2 \\
    &\quad + C(K) h^{-2 \, \max(0,\gamma-\theta/2-\delta)  } 
    (s-lh) \normALpHS{\big}{p}{\delta}{B(Y_l)}^2 \\
    &\quad + C(K) h^{2(\gamma-\theta/2)}
    \bigg( (s-lh)^2 \normALpH{\big}{p}{}{A Y_l + F(Y_l)}^2 
    + (s-lh)^2 \\
    &\qquad \cdot \normALpR{\Big}{\int_0^1 \norm{B''(Y_l+r \czehn b(\cdot,Y_l)) 
    + B''(Y_l-r \czehn b(\cdot,Y_l))}_{L^{(2)}(H,HS(U_0,H))} (1-r) \dd r \\
    &\qquad \cdot
	\norm{B(Y_l)}_{HS(U_0,H)}^2}^2 \\
    &\quad + \int_{lh}^s \normALpHS{\Big}{p}{}{\int_0^1 B'(Y_l+r\celf b(\cdot,Y_l))
    \dd r \bigg( \int_{lh}^u B(Y_l) \dd W_v \bigg)}^2 \dd u \bigg) \\
    &\leq C(K) \Big( 1 + ((l+\tfrac{1}{2})h-s)^2 h^{-2} \Big)
    \normALpHS{\big}{p}{\beta}{B(Y_l)}^2 \\
    &\quad + C(K) h^{  -2 \, \max(0,\gamma-\theta/2-\delta) }  
    (s-lh) \normALpHS{\big}{p}{\delta}{B(Y_l)}^2 \\
    &\quad + C(K) h^{2(\gamma-\theta/2)} \bigg( (s-lh)^2 
    \normALpH{\big}{p}{}{A Y_l + F(Y_l)}^2 \\
    &\quad + (s-lh)^2 
    \normALpR{\Big}{\int_0^1 2 \Big( 1 + 2 \norm{Y_l}_H 
    + r  \, | \czehn | 
    \norm{b(\cdot, Y_l)}_H\Big) (1-r) \dd r}^2 \\
    &\quad + \int_{lh}^s \normALpR{\Big}{\int_0^1 \norm{B'(Y_l
    + r \celf b(\cdot,Y_l))}_{L(H,HS(U_0,H))} \dd r \normA{\Big}{\int_{lh}^u 
    B(Y_l) \dd W_v}_H}^2 \dd u \bigg) .
\end{align*}
Using~\cite[Thm 4.37]{MR3236753} once more and making
use of the boundedness of $B'$, we get as an estimate for the considered 
integrand
\begin{align*}
	&\normALpHS{\Big}{p}{\gamma-\theta/2}{\e^{\tfrac{A}{2}(m-l-\tfrac{1}{2})h}
   	\hat{\Upsilon}_l^B(s)}^2 \\
    &\leq C(K) \Big( 1 + ((l+\tfrac{1}{2})h-s)^2 h^{-2} \Big)
    \normALpHS{\big}{p}{\beta}{B(Y_l)}^2 \\
    &\quad + C(K) h^{ -2 \, \max(0,\gamma-\theta/2-\delta)  } 
    (s-lh) \normALpHS{\big}{p}{\delta}{B(Y_l)}^2 \\
    &\quad + C(K) h^{2(\gamma-\theta/2)} \bigg( (s-lh)^2 
    \normALpH{\big}{p}{}{A Y_l + F(Y_l)}^2 \\
    &\quad + (s-lh)^2 
    \Big( 1 + \normLpH{p}{}{Y_l}^2 + | \czehn |^2
    \normLpH{p}{}{b(\cdot,Y_l)}^2 \Big) \\
    &\quad + \int_{lh}^s \int_{lh}^u \normALpHS{\big}{p}{}{B(Y_l)}^2 
    \dd v \dd u \bigg) .
\end{align*}
Taking into account that $1-2 \, \max(0, \gamma-\theta/2-\delta) \geq 0$,
we finally obtain by estimates (P1)--(P4) in Subsection~\ref{Sec:Preliminaries} 
that 
\begin{align*}
	&\int_{lh}^{(l+1)h}
	\normALpHS{\Big}{p}{\gamma-\theta/2}{\e^{\tfrac{A}{2}(m-l-\tfrac{1}{2})h}
	\hat{\Upsilon}_l^B(s)}^2 
	\dd s \\
	&\leq C(K) \int_{lh}^{(l+1)h} \Big( 1 + ((l+\tfrac{1}{2})h-s)^2 h^{-2} \Big) \\
	&\quad + h^{{ -2 \, \max(0, \gamma-\theta/2-\delta) } } 
	(s-lh) + h^{2(\gamma-\theta/2)} \Big( (s-lh)^2 
	+ { (s-lh)^2 + \tfrac{1}{2} } (s-lh)^2 \Big) \dd s \\
	&\qquad \cdot \Big( 1 + \normLpH{p}{\gamma}{Y_l}^2 \Big) \\
	&\leq C(K) \, h \, \Big( 1 + h^{1 
	{ -2 \, \max(0, \gamma-\theta/2-\delta) } } 
	+ h^{{1 + 2} (\gamma-\theta/2)} \Big) 
	\Big( 1 + \normLpH{p}{\gamma}{Y_l}^2 \Big) \\
	&\leq C(K) h \Big( 1 + \normLpH{p}{\gamma}{Y_l}^2 \Big) .
\end{align*}
%
%
%%%%%%%%%%%%%%%%%%%%%%% BOUND: FINALE %%%%%%%%%%%%%%%%%%%%%%%%%%%%%
%
%
As a last step, we combine our computations in Subsections~\ref{Sec:F} 
and~\ref{Sec:B} to obtain
\begin{align*}
	\normLpH{p}{\gamma}{Y_m}^2 %\\
	&\leq C(K) \normLpH{p}{\gamma}{Y_0}^2 + C(K) \sum_{l=0}^{m-1} 
	\big( (m-l-\tfrac{1}{2}) h\big)^{-\theta} h 
	\Big( 1 + \normLpH{p}{\gamma}{Y_l}^2 \Big) \\
	&\leq C(K) \Big( 1 + \normLpH{p}{\gamma}{Y_0}^2 \Big) + C(K) h^{1-\theta}
	\sum_{l=0}^{m-1} (m-l)^{-\theta} \normLpH{p}{\gamma}{Y_l}^2 .
\end{align*}
Finally, applying the generalized discrete Gronwall lemma~\cite{MR880357} 
and performing computations in an analogous manner as 
in~\cite[Sec 4.1]{MR3534472} completes the proof of 
Proposition~\ref{Prop:Bounds}.
%
% ----------------------------------------
%
\end{proof}
%
% ----------------------------------------
%
%
%
%
\subsection{Proof of convergence}\label{Sec:Convergence}
\begin{proof}[Proof of Theorem~\ref{Prop:Conv}]
For the proof of the temporal convergence of the approximation defined by
the $\RKTN$ schemes, we consider the auxiliary process $\bar{Y}_m$, 
$m\in\{0,\ldots,M\}$, defined as
\begin{align*}
	\bar{Y}_m &= \e^{Amh} X_0 + \sum_{l=0}^{m-1} \e^{A(m-l-\tfrac{1}{2})h} 
	\bigg\{ h F(Y_l) + \frac{h^2}{2} F'(Y_l) \Big( AY_l+F(Y_l) \Big) \\
	&\quad + F'(Y_l) \Big( B(Y_l) \int_{lh}^{(l+1)h} ( W_s - W_{lh} ) \dd s \Big) 
	+ \sumJ \frac{h^2}{4} F''(Y_l)(B(Y_l)g_j,B(Y_l)g_j) \\
	&\quad + \int_{lh}^{(l+1)h} B(Y_l) \dd W_s 
	+ {A} \int_{lh}^{(l+1)h} ((l+\tfrac{1}{2})h-s) B(Y_l) \dd W_s \\
	&\quad + \int_{lh}^{(l+1)h} \int_{lh}^s B'(Y_l) \big( A Y_l + F(Y_l) \big) 
	\dd u \dd W_s \\
	&\quad + \int_{lh}^{(l+1)h} B'(Y_l) \bigg( \int_{lh}^s B(Y_l) \dd W_u \bigg) \dd W_s \\
	&\quad + \frac{1}{2} \int_{lh}^{(l+1)h} B''(Y_l) \bigg( \int_{lh}^s B(Y_l) \dd W_u,
	\int_{lh}^s B(Y_l) \dd W_u \bigg) \dd W_s \\
	&\quad + \int_{lh}^{(l+1)h} B'(Y_l) \bigg( \int_{lh}^s B'(Y_l) \bigg( \int_{lh}^u B(Y_l) 
	\dd W_v \bigg) \dd W_u \bigg) \dd W_s \bigg\}.
\end{align*}
Roughly speaking, this process equals the exponential Wagner-Platen 
type scheme given in~\eqref{Com-Exp-Wagner-Platen-scheme} with 
the difference that the operators are evaluated at the approximation values
$Y_m$, $m\in\{0,\ldots,M\}$, of the generalized $\RKTN$ schemes.

We split the error estimate such that we can transfer parts
of the proof for the Wagner-Platen type scheme in~\cite{MR3534472} 
to our setting for the first term on the right hand side of
\begin{equation*}
    \normLpH{2}{}{X_{mh}-Y_m}^2 
    \leq 2 \normLpH{2}{}{X_{mh}-\bar{Y}_m}^2 + 2 \normLpH{2}{}{\bar{Y}_m-Y_m}^2.
\end{equation*}
The second term on the right hand side is the error 
that results from replacing the derivatives 
in~\eqref{Com-Exp-Wagner-Platen-scheme} by suitable approximations.
Then, the estimates established in Sections~\ref{sec:convWP} 
and~\ref{sec:convDF} below imply that
\begin{equation*}
    \normLpH{2}{}{X_{mh}-Y_m}^2 
    \leq C(K) h \sum_{l=0}^{m-1} \normLpH{2}{}{X_{lh}-Y_l}^2 
    + C(K)h^{2\gamma} + C(K)h^{2\gamma} .
\end{equation*}
Finally, the application of the discrete Gronwall lemma~\cite{MR880357} yields
\begin{equation*}
    \normLpH{2}{}{X_{mh}-Y_m}^2 
    \leq C(K) h^{2\gamma} + C(K) \sum_{l=0}^{m-1} h^{2\gamma} h \, 
    \exp \Big( C(K) \sum_{j=l+1}^{m-1} h \Big) \leq C(K) h^{2\gamma}
\end{equation*}
and thus completes the proof for
$\normLpH{2}{}{X_{mh}-Y_m} \leq C(K) h^{\gamma}$.
\subsubsection{Estimates for $\| X_{mh}-\bar{Y}_m \|_{L^2(\Omega;H)}^2$}
\label{sec:convWP}
For this part of the proof, we employ analogous steps as in the corresponding 
proof for the exponential Wagner-Platen type scheme given in~\cite{MR3534472}. 
Let $\lfloor t \rfloor= \max\{s\in \{0,h,\ldots (M-1)h, T\} : s\leq t\}$. 
As in~\cite{MR3534472}, we define the processes
\begin{equation*}
    \hat{X}_t = \e^{A(t-\lfloor t\rfloor)} Y_{\lfloor t\rfloor /h} 
    + \int_{\lfloor t\rfloor}^t \e^{A(t-s)} F(\hat{X}_s) \dd s 
    + \int_{\lfloor t\rfloor}^t \e^{A(t-s)} B(\hat{X}_s) \dd W_s
\end{equation*}
and
\begin{align*}
	\Phi_t &= F(\hat{X}_{\lfloor t\rfloor}) 
	+ F'(\hat{X}_{\lfloor t\rfloor}) 
	\int_{\lfloor t\rfloor}^t \Big( A \hat{X}_{\lfloor t\rfloor}
	+ F(\hat{X}_{\lfloor t\rfloor})\Big) \dd u 
	+ F'(\hat{X}_{\lfloor t\rfloor})\int_{\lfloor t\rfloor}^t B(\hat{X}_{\lfloor t\rfloor}) \dd W_u \\
	&\quad + \frac{1}{2} \sumJ \int_{\lfloor t\rfloor}^t F''(\hat{X}_{\lfloor t\rfloor}) 
	\Big( B(\hat{X}_{\lfloor t\rfloor})g_j, B(\hat{X}_{\lfloor t\rfloor}) g_j\Big) \dd u , \\
    \Psi_t &= B(\hat{X}_{\lfloor t\rfloor})
    + B'(\hat{X}_{\lfloor t\rfloor}) 
    \int_{\lfloor t\rfloor}^t \Big( B(\hat{X}_{\lfloor t\rfloor}) 
    + B'(\hat{X}_{\lfloor t\rfloor}) 
    \int_{\lfloor t\rfloor}^u B(\hat{X}_{\lfloor t\rfloor}) \dd W_v \Big) \dd W_u \\
	&\quad + B'(\hat{X}_{\lfloor t\rfloor}) 
	\int_{\lfloor t\rfloor}^t \Big(A \hat{X}_{\lfloor t\rfloor} 
	+ F(\hat{X}_{\lfloor t\rfloor}) \Big) \dd u \\
	&\quad + \frac{1}{2} B''(\hat{X}_{\lfloor t\rfloor}) 
	\Big( \int_{\lfloor t\rfloor}^t B(\hat{X}_{\lfloor t\rfloor}) \dd W_u, 
	\int_{\lfloor t\rfloor}^t B(\hat{X}_{\lfloor t\rfloor}) \dd W_u \Big) .
\end{align*}
These in turn are used in the definition of the following two
auxiliary processes
\begin{equation*}
 \begin{split}
    Z_t &= \e^{At} X_0 + \int_0^t \e^{A(t-s)} F(\hat{X}_s) \dd s 
    + \int_0^t \e^{A(t-s)} B(\hat{X}_s) \dd W_s , \\
    \hat{Z}_t &= \e^{At} X_0 + \int_0^t \e^{A(t-s)} \Phi_s \dd s 
    + \int_0^t \e^{A(t-s)} \Psi_s \dd W_s.
 \end{split}
\end{equation*}
Due to the moment estimates for the approximations of the generalized $\RKTN$ schemes 
given in Section~\ref{Sec:Bound}, we can directly transfer the biggest
part of the proof in~\cite{MR3534472} to our setting. We only 
detail equation~(62) in~\cite{MR3534472} here, as this is the 
single step that we need to adapt. We compute
\begin{align*}
    &\normLpH{2}{}{X_{mh}-\bar{Y}_m}^2 \\
    &\leq 3 \Big( \normLpH{2}{}{X_{mh}-Z_{mh}}^2 
    + \normALpH{}{2}{}{Z_{mh}-\hat{Z}_{mh}}^2 
    + \normALpH{}{2}{}{\hat{Z}_{mh}-\bar{Y}_{mh}}^2\Big) \\
	&\leq C(K) \sum_{l=0}^{m-1} \int_{lh}^{(l+1)h} \normLpH{2}{}{X_{lh}-Y_l}^2 \dd s 
	+ C(K)h^{2\gamma}
\end{align*}
for all $m\in\{1,\ldots,M\}$.
\subsubsection{Estimates for $\| \bar{Y}_m-Y_m \|_{L^2(\Omega;H)}^2$}
\label{sec:convDF}
Similarly as in the proof of Proposition~\ref{Prop:Bounds}, we 
employ Taylor expansions at the pointwise level in such a way 
that the terms can be expressed in terms of operators. We use 
the same kind of Taylor expansions as before, however, add and 
subtract some terms. In this way, we create all the terms that 
are involved in Lemma~\ref{Lem:CC1} and Lemma~\ref{lem:CC2} presented
in the following,
which in turn are needed to rewrite the schemes in the fashion 
used in the proof. First of all, we start to list the needed Taylor 
expansions.
\subsubsection*{Taylor expansions}
%
%%% TERM 3
We consider the generalized $\RKTN$ schemes given in~\eqref{eq:DF_summed}.
For term~\eqref{eq:DQsum_3}, we use the Taylor expansion
\begin{align}\label{eq:Taylor_conv_3}
    &\frac{1}{\cdrei} \Big( f\big(z,Y_l(z)+ \cdrei [A Y_l(z)+f(z,Y_l(z))]\big) 
    - f(z,Y_l(z)) \Big) \nonumber \\ 
    &= F'(Y_l) \Big( A Y_l + F(Y_l) \Big) (z)  
    + \int_0^1 F'(Y_l+r \cdrei [A Y_l + F(Y_l)]) 
    - F'(Y_l) \dd r \Big( A Y_l + F(Y_l) \Big) (z),
\end{align}
%
%%% TERM 4
instead of~\eqref{eq:DQsum_4} we use
\begin{align}\label{eq:Taylor_conv_4}
	&\frac{1}{\cvier} \Big( f\big(z,Y_l(z)+ \cvier b(z,Y_l(z))\big) 
	- f(z,Y_l(z)) \Big) \Wds (z) \nonumber \\
	&= F'(Y_l) \bigg( B(Y_l) \WdsOK \bigg) (z) \nonumber \\
	&\quad  + \int_0^1 \Big[ F'(Y_l+r \cvier b(\cdot,Y_l)) 
	- F'(Y_l)\Big] \dd r \bigg( B(Y_l) \WdsOK \bigg) (z)
\end{align}
%%% TERM 5
and for~\eqref{eq:DQsum_5}, we work with
\begin{align}\label{eq:Taylor_conv_5}
	&\frac{1}{\cfuenf^2} \Big( f\big(z,Y_l(z)+ \cfuenf b(z,Y_l(z))\big) 
	- 2 f(z,Y_l(z)) + f\big(z,Y_l(z)- \cfuenf b(z,Y_l(z))\big) \Big) g_j^2(z) \nonumber \\
    &= F''(Y_l) \big( B(Y_l) g_j, B(Y_l) g_j \big) (z) \nonumber \\
    &\quad + \int_0^1 \big[ F''(Y_l+r \cfuenf b(\cdot,Y_l)) - 2 F''(Y_l) 
    + F''(Y_l-r \cfuenf b(\cdot,Y_l)) \big] 
    \big( B(Y_l) g_j, B(Y_l) g_j \big) (z) (1-r) \dd r .
\end{align}
%
%%% TERM 8
Further, in~\eqref{eq:DQsum_8} the Taylor expansion 
\begin{align}\label{eq:Taylor_bound_8}
	&\frac{1}{\cacht} \Big( b(z,Y_l(z) + \cacht [A Y_l(z) + f(z,Y_l(z))]) - b(z,Y_l(z)) \Big)
	\bigg( \int_{lh}^{(l+1)h} (s-lh) \dd W_s \bigg)(z) \nonumber \\
    &= B'(Y_l) \big( A Y_l + F(Y_l) \big) 
    \bigg( \int_{lh}^{(l+1)h} (s-lh) \dd W_s \bigg) (z) \nonumber \\
    &\quad + \int_0^1 \big[ B'(Y_l + r \cacht [A Y_l + F(Y_l)]) - B'(Y_l) \big] \dd r 
    \big( A Y_l + F(Y_l) \big) \bigg( \int_{lh}^{(l+1)h} (s-lh) \dd W_s \bigg) (z)
\end{align}
is employed.
The following Taylor expansions are used twice
%%% TERM 9 & 12
%
\begin{align}\label{eq:Taylor_conv_912}
	&\frac{1}{\cneun} \Big( b\big( z,Y_l(z)+ \cneun b(z,Y_l(z))\big) 
	- b(z,Y_l(z)) \Big) u^2(z) \nonumber \\
    &= \big( B'(Y_l) \big( B(Y_l) u \big) u \big) (z) 
    + \bigg( \int_0^1  \Big[ B'(Y_l+r \cneun b(\cdot,Y_l)) - B'(Y_l)\Big] \dd r 
    \, \big( B(Y_l)u \big) u \bigg) (z)
\end{align}
with varying 
$u \in U_0$.
To be precise, expression~\eqref{eq:DQsum_9} is replaced with $u=\Delta W_l$ 
and expression~\eqref{eq:DQsum_12} with $u=g_j$.
Moreover, in~\eqref{eq:DQsum_10} we use the Taylor expansion 
\begin{align}\label{eq:Taylor_conv_1013}
	&\frac{1}{\czehn^2} \Big( b\big(z,Y_l(z)+\czehn b(z,Y_l(z))\big) 
	- 2 b(z,Y_l(z)) + b\big(z,Y_l(z)-\czehn b(z,Y_l(z))\big) \Big) 
	u^2(z) \tilde{u}(z) \nonumber \\
    &= \bigg( B''(Y_l)\Big( B(Y_l) u, B(Y_l) u\Big) \tilde{u} \bigg) (z) \nonumber \\
    &\quad + \bigg( \int_0^1 \Big[ B''(Y_l+r \czehn b(\cdot,Y_l))-B''(Y_l)\Big] (1-r) \dd r 
    \Big( B(Y_l)u, B(Y_l)u \Big) \tilde{u} \bigg) (z) \nonumber \\
    &\quad + \bigg( \int_0^1 \Big[ B''(Y_l-r \czehn b(\cdot,Y_l))-B''(Y_l)\Big] (1-r) \dd r 
    \Big( B(Y_l)u, B(Y_l)u \Big) \tilde{u} \bigg) (z)
\end{align}
with $u=\tilde{u}=\Delta W_l$ and 
in expression~\eqref{eq:DQsum_13} with $u=g_j$ and $\tilde{u}=\WdsOK$.
Finally, we employ 
\begin{align}\label{eq:Taylor_conv_1114}
	&\frac{1}{\aelf} \bigg( b\big( z,Y_l(z)
	+\tfrac{\aelf}{\celf}\big[ b(z,Y_l(z)+ \celf b(z,Y_l(z))) -b(z,Y_l(z))\big]\big) 
	- b(z,Y_l(z)) \bigg) u^2(z) \tilde{u}(z) \nonumber \\
	&= \Big( B'(Y_l) \Big( B'(Y_l) \Big( B(Y_l) u \Big) u \Big) \tilde{u} \Big) (z) \nonumber \\
	&\quad + \Big( B'(Y_l) \Big( \int_0^1 B'(Y_l+r\celf b(\cdot,Y_l))-B'(Y_l) \dd r 
	\Big( B(Y_l) u \Big) u \Big) \tilde{u} \Big) (z) \nonumber \\
	&\quad + \Big( \int_0^1 B'\big( Y_l + r \tfrac{\aelf}{\celf} 
	\big[ b(\cdot,Y_l+\celf b(\cdot,Y_l))-b(\cdot,Y_l)\big]\big) - B'(Y_l) \dd r \nonumber \\
	&\quad \quad \Big( \int_0^1 B'(Y_l+r\celf b(\cdot,Y_l)) \dd r 
	\Big( B(Y_l) u \Big) u \Big) \tilde{u} \Big) (z)
\end{align}
in expression~\eqref{eq:DQsum_11} with $u=\tilde{u}=\Delta W_l$ and in~\eqref{eq:DQsum_14} 
with $u=g_j$ and $\tilde{u}=\Delta W_l$.
%

%%%%%%%%%%%%%%%%%%%%%%% CONV: COMMUTATIVITY %%%%%%%%%%%%%%%%%%%%%%%%%%%%%
As the commutativity condition~\eqref{eq:comm_con_1} is fulfilled, we can 
rewrite the first terms in~\eqref{eq:Taylor_conv_912} with $u = \Delta W_l$ 
and $u= g_j$ by Lemma~\ref{Lem:CC1}, insert these expressions in~\eqref{eq:DQsum_9}
and~\eqref{eq:DQsum_12}, and obtain
\begin{align}\label{eq:comm_conv_rewrite_912_a}
    &\frac{1}{2} B'(Y_l) \Big( B(Y_l)\Delta W_l \Big) \Delta W_l 
    - \frac{h}{2} \sumJ B'(Y_l) \Big( B(Y_l) g_j \Big) g_j \nonumber \\
    &= \int_{lh}^{(l+1)h} B'(Y_l) \bigg( \int_{lh}^s B(Y_l) \dd W_u \bigg) \dd W_s.
\end{align}
Also by Lemma~\ref{Lem:CC1}, the remaining terms in 
equation~\eqref{eq:Taylor_conv_912} can be expressed as 
\begin{align*} 
	&\frac{1}{2} \int_0^1  \Big[ B'(Y_l+r \cneun b(\cdot,Y_l)) - B'(Y_l)\Big] \dd r 
	\Big( B(Y_l) \Delta W_l \Big) \Delta W_l \\
	&\quad - \frac{h}{2} \sumJ \int_0^1  \Big[ B'(Y_l +r \cneun b(\cdot,Y_l)) - B'(Y_l)\Big] \dd r 
	\Big( B(Y_l) g_j \Big) g_j \\
	&= \int_{lh}^{(l+1)h} \int_0^1  \Big[ B'(Y_l+r \cneun b(\cdot,Y_l)) - B'(Y_l)\Big] 
	\dd r \bigg( \int_{lh}^s B(Y_l) \dd W_u \bigg) \dd W_s.
\end{align*}

We combine equations~\eqref{eq:Taylor_conv_1013} and~\eqref{eq:Taylor_conv_1114} 
according to~\eqref{eq:DF_summed} and as the second commutativity 
condition~(\ref{eq:comm_con_2}) is fulfilled, we get by Lemma~\ref{lem:CC2} 
for the terms in equations~\eqref{eq:Taylor_conv_1013} and~\eqref{eq:Taylor_conv_1114} 
that are involved in the Wagner-Platen scheme that
\begin{align*} 
	&\frac{1}{6} B''(Y_l) \Big( B(Y_l) \Delta W_l, B(Y_l) \Delta W_l \Big) 
	\Delta W_l
	+ \frac{1}{6} B'(Y_l) \Big( B'(Y_l) \Big( B(Y_l) \Delta W_l \Big) 
	\Delta W_l \Big) \Delta W_l \\
	&\quad - \frac{1}{2} \sumJ B''(Y_l) \Big( B(Y_l) g_j, B(Y_l) g_j \Big) \WdsOK \\
	&\quad - \frac{h}{2} \sumJ B'(Y_l) 
	\Big( B'(Y_l) \Big( B(Y_l) g_j \Big) g_j \Big) \Delta W_l \\
	&= \frac{1}{2} \int_{lh}^{(l+1)h} 
	B''(Y_l) \bigg( \int_{lh}^s B(Y_l) \dd W_u, \int_{lh}^s B(Y_l) \dd W_u \bigg) 
	\dd W_s \\
	&\quad +\int_{lh}^{(l+1)h} B'(Y_l) 
	\bigg( \int_{lh}^s B'(Y_l) 
	\bigg( \int_{lh}^u B(Y_l) \dd W_v \bigg) \dd W_u \bigg) \dd W_s .
\end{align*}
Similarly, we get for the rest of terms in equations~\eqref{eq:Taylor_conv_1013} 
and~\eqref{eq:Taylor_conv_1114} that
\begin{align*} 
	&\frac{1}{6} \int_0^1 \Big[ B''(Y_l +r \czehn b(\cdot,Y_l)) -B''(Y_l) \Big]
	(1-r) \dd r \Big( B(Y_l) \Delta W_l, B(Y_l) \Delta W_l \Big) \Delta W_l \\
	&\quad +\frac{1}{6} B'(Y_l) \Big( \int_0^1 B'(Y_l +r\celf b(\cdot,Y_l)) -B'(Y_l) \dd r
	\Big( B(Y_l) \Delta W_l \Big) \Delta W_l \Big) \Delta W_l \\
	&\quad - \frac{1}{2} \sumJ \int_0^1 \Big[ B''(Y_l +r \czehn b(\cdot,Y_l)) -B''(Y_l)\Big]
	(1-r) \dd r \\
	&\quad \quad \Big( B(Y_l) g_j, B(Y_l) g_j \Big) \WdsOK \\
	&\quad - \frac{h}{2} \sumJ B'(Y_l) \Big( \int_0^1 B'(Y_l +r \celf b(\cdot,Y_l)) 
	-B'(Y_l) \dd r \Big( B(Y_l) g_j \Big) g_j \Big) \Delta W_l \\
	&= \frac{1}{2} \int_{lh}^{(l+1)h} \int_0^1 
	\Big[ B''(Y_l +r \czehn b(\cdot,Y_l)) -B''(Y_l) \Big] (1-r) \dd r \\
	&\quad \quad \bigg( \int_{lh}^s B(Y_l) \dd W_u, \int_{lh}^s B(Y_l) \dd W_u \bigg) \dd W_s \\
	&\quad + \int_{lh}^{(l+1)h} B'(Y_l) \bigg( \int_{lh}^s \int_0^1
	B'(Y_l +r \celf b(\cdot,Y_l)) -B'(Y_l) \dd r
	\bigg( \int_{lh}^u B(Y_l) \dd W_v \bigg) \dd W_u \bigg) \dd W_s  
\end{align*}
and that
\begin{align*} %\label{eq:comm_conv_rewrite_10111314_c}
	&\frac{1}{6} \int_0^1 \Big[ B''(Y_l -r \czehn b(\cdot,Y_l)) -B''(Y_l)\Big] (1-r) \dd r
	\Big( B(Y_l) \Delta W_l, B(Y_l) \Delta W_l \Big) \Delta W_l \\
	&\quad +\frac{1}{6} \int_0^1 B'\big( Y_l + r \tfrac{\aelf}{\celf} 
	\big[ b(\cdot,Y_l+\celf b(\cdot,Y_l))-b(\cdot,Y_l)\big]\big) - B'(Y_l) \dd r \\
	&\quad \quad \Big( \int_0^1 B'(Y_l+r\celf b(\cdot,Y_l)) \dd r 
	\Big( B(Y_l) \Delta W_l \Big) \Delta W_l \Big) \Delta W_l \\
    &\quad - \frac{1}{2} \sumJ \int_0^1 \Big[ B''(Y_l -r \czehn b(\cdot,Y_l))
    -B''(Y_l)\Big] (1-r) \dd r \\
    &\quad \quad \Big( B(Y_l) g_j, B(Y_l) g_j \Big) \WdsOK \\
    &\quad - \frac{h}{2} \sumJ \int_0^1 B'\big( Y_l + r \tfrac{\aelf}{\celf} 
    \big[ b(\cdot,Y_l+\celf b(\cdot,Y_l))-b(\cdot,Y_l)\big]\big) - B'(Y_l) \dd r \\
    &\quad \quad \Big( \int_0^1 B'(Y_l +r \celf b(\cdot,Y_l)) \dd r 
    \Big( B(Y_l) g_j \Big) g_j \Big) \Delta W_l \\
    &= \frac{1}{2} \int_{lh}^{(l+1)h} \int_0^1 
    \Big[ B''(Y_l-r \czehn b(\cdot,Y_l))-B''(Y_l)\Big] (1-r) \dd r \\
	&\quad \quad \bigg( \int_{lh}^s B(Y_l) \dd W_u, \int_{lh}^s B(Y_l) \dd W_u \bigg) \dd W_s \\
    &\quad +\int_{lh}^{(l+1)h} \int_0^1 
    B'\big( Y_l + r \tfrac{\aelf}{\celf} 
    \big[ b(\cdot,Y_l +\celf b(\cdot,Y_l))-b(\cdot,Y_l)\big]\big) - B'(Y_l) \dd r \\
    &\quad \quad \bigg( \int_{lh}^s \int_0^1 B'(Y_l+r\celf b(\cdot,Y_l)) \dd r 
    \bigg( \int_{lh}^u B(Y_l) \dd W_v \bigg) \dd W_u \bigg) \dd W_s ,
\end{align*}
as the commutativity condition~\eqref{eq:comm_con_2} is also fulfilled 
for these terms.

Thus, we can rewrite the approximation as
\begin{equation} \label{eq:Aprox_PsiF_PsiB}
    Y_m = \bar{Y}_m + \sum_{l=0}^{m-1} \e^{A(m-l-\tfrac{1}{2})h} 
    \Big( \Upsilon^F_l + \int_{lh}^{(l+1)h} \Upsilon^B_l(s) \dd W_s \Big)
\end{equation}
where
\begin{align*}
	\Upsilon^F_l &= \frac{h^2}{2} \int_0^1 
	\Big[ F'(Y_l + r \cdrei [A Y_l + F(Y_l)]) - F'(Y_l) \Big] \dd r 
	\Big( A Y_l +F(Y_l) \Big) \\
	&\quad +\int_0^1 \Big[ F'(Y_l+r \cvier b(\cdot,Y_l)) - F'(Y_l)\Big] \dd r 
	\bigg( B(Y_l) \WdsOK \bigg) \\
	&\quad +\frac{h^2}{4} \sumJ \int_0^1 
	\Big[ F''(Y_l+r \cfuenf b(\cdot,Y_l)) - 2 F''(Y_l) 
	+ F''(Y_l-r \cfuenf b(\cdot,Y_l))\Big] (1-r) \dd r \\
	&\quad \quad \big( B(Y_l) g_j, B(Y_l) g_j \big) 
%	\\
\end{align*}
and 
\begin{align*}
	\Upsilon^B_l(s) &= \int_{lh}^s \int_0^1 \Big[ B'(Y_l+r \cacht [AY_l+F(Y_l)])
	- B'(Y_l)\Big] \dd r \Big( AY_l+F(Y_l)\Big) \dd u \\
	&\quad + \int_0^1  \Big[ B'(Y_l+r \cneun b(\cdot,Y_l)) - B'(Y_l)\Big] \dd r
	\bigg( \int_{lh}^s B(Y_l) \dd W_u \bigg) \\
	&\quad + \frac{1}{2} \int_0^1 \Big[ B''(Y_l+r \czehn b(\cdot,Y_l))-2 B''(Y_l)
	+ B''(Y_l-r \czehn b(\cdot,Y_l))\Big] (1-r) \dd r \\
	&\quad \quad \bigg( \int_{lh}^s B(Y_l) \dd W_u, \int_{lh}^s B(Y_l) 
	\dd W_u \bigg) \\
	&\quad + \int_0^1 B'(Y_l) \bigg( \int_{lh}^s B'(Y_l+r\celf b(\cdot,Y_l))
	- B'(Y_l) \bigg( \int_{lh}^u B(Y_l) \dd W_v \bigg) \dd W_u \bigg) \dd r \\
	&\quad + \int_0^1 B'\big( Y_l + r \tfrac{\aelf}{\celf} \big[ b(\cdot,Y_l 
	+ \celf b(\cdot,Y_l))-b(\cdot,Y_l)\big]\big) - B'(Y_l) \dd r \\
	&\quad \quad \bigg( \int_{lh}^s \int_0^1 B'(Y_l+r\celf b(\cdot,Y_l)) \dd r 
	\bigg( \int_{lh}^u B(Y_l) \dd W_v \bigg) \dd W_u \bigg).
\end{align*}

In the following, we consider the terms on the right hand side 
of the estimate
\begin{align*}
    \normLpH{2}{}{\bar{Y}_m-Y_m}^2
    &= \bigg\lVert \sum_{l=0}^{m-1} \e^{A(m-l-\tfrac{1}{2})h} 
    \Big( \Upsilon^F_l + \int_{lh}^{(l+1)h} \Upsilon^B_l(s) \dd W_s \Big) \bigg\rVert_{L^2(\Omega;H)}^2 \\
    &\leq 2 \bigg( \sum_{l=0}^{m-1} \normA{\Big}{\e^{A(m-l-\tfrac{1}{2})h}}_{L(H)} \normLpH{2}{}{\Upsilon^F_l}\bigg)^2 \\
    &\quad + 2 \, \normALpH{\bigg}{2}{}{\int_0^{mh} \sum_{l=0}^{m-1} \ind_{[lh,(l+1)h)}(s) 
    \, \e^{A(m-l-\tfrac{1}{2})h} \Upsilon^B_l(s) \dd W_s}^2
\end{align*}
separately.
As we need the succeeding basic computations several times, we state
the following lemma.
\begin{lem} \label{Lma:Estimates_FB}
	Given the assumptions in Section~\ref{Sec:Setting} and 
	Theorem~\ref{Prop:Conv}, it holds for $p \geq 2$
	that
	\begin{equation} \label{eq:E_AYFY}
		\big\| AY_l+F(Y_l) \big\|_{L^p(\Omega;H)} \leq C(K)
	\end{equation}
	and for $q \in \{1,2,4\}$, it holds that
	\begin{equation} \label{eq:E_bYBY}
		\Big\| \big\| b(\cdot,Y_l) \big\|_H^2 \,
		\big\| B(Y_l) \big\|_{HS(U_0,H)}^q \Big\|_{L^p(\Omega;\mathbb{R})} \leq C(K) \, .
	\end{equation} 
\end{lem}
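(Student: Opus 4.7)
The plan is to reduce both estimates to the uniform $L^p(\Omega;H_\gamma)$ moment bounds on $Y_l$ (provided for $p=6$ directly in the hypotheses of Theorem~\ref{Prop:Conv}, and upgradable to arbitrary $p\geq 2$ through Proposition~\ref{Prop:Bounds}), once the linear growth conditions on $F$, $b$ and $B$, which are stated in $H_\alpha$ or $H_\beta$, are transferred to $H$-norms by means of the bounded fractional powers $(\eta-A)^{-r}$ for $r\in[0,1]$.

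For \eqref{eq:E_AYFY} I would split $\|AY_l+F(Y_l)\|_H \leq \|(\eta-A)Y_l\|_H + \eta\|Y_l\|_H + \|F(Y_l)\|_H$. The first summand is handled by writing $(\eta-A)Y_l = (\eta-A)^{1-\gamma}(\eta-A)^\gamma Y_l$ and invoking $\sup_{r\in[0,1]}\|(\eta-A)^{-r}\|_{L(H)}\leq K$, which applies because $\gamma-1\in[0,\tfrac{1}{2})$; the second is analogous via $(\eta-A)^{-\gamma}$. For the third I would use $F(H_\alpha)\subseteq H_\alpha$ together with the linear growth of $F$ in $H_\alpha$, then pass to $H$ through $\|F(Y_l)\|_H \leq \|(\eta-A)^{-\alpha}\|_{L(H)}\|F(Y_l)\|_{H_\alpha}$, and finally embed $H_\gamma\hookrightarrow H_\alpha$ using $\alpha\leq\gamma$. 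Taking the $L^p(\Omega;\mathbb{R})$-norm and applying the moment bound on $\|Y_l\|_{H_\gamma}$ yields \eqref{eq:E_AYFY}.

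For \eqref{eq:E_bYBY} the two factors would be bounded pointwise in $\omega$ in terms of $\|Y_l\|_{H_\gamma}$. Concretely, $\|b(\cdot,Y_l)\|_H \leq K(1+\|Y_l\|_{H_\gamma})$ follows directly from the hypothesis $\sup_{v\in H_\gamma}\|b(\cdot,v)\|_H/(1+\|v\|_{H_\gamma})\leq K$, while $\|B(Y_l)\|_{HS(U_0,H)} \leq \|(\eta-A)^{-\beta}\|_{L(H)}\|B(Y_l)\|_{HS(U_0,H_\beta)} \leq C(K)(1+\|Y_l\|_{H_\gamma})$ uses the linear growth of $B$ in $H_\beta$ combined with $\beta\leq\gamma$ and the bound on $(\eta-A)^{-\beta}$. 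Multiplying these two estimates gives a pointwise bound of the form $C(K)(1+\|Y_l\|_{H_\gamma})^{2+q}$, and the $L^p(\Omega;\mathbb{R})$-norm is then controlled by $\|Y_l\|_{L^{p(2+q)}(\Omega;H_\gamma)}^{2+q}$.

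The main, rather minor, obstacle is the moment-order bookkeeping: for $q\in\{1,2,4\}$ and general $p\geq 2$ the required integrability $p(2+q)$ exceeds the $L^6$-bound explicitly listed in the hypothesis of Theorem~\ref{Prop:Conv}. This will be resolved by invoking Proposition~\ref{Prop:Bounds} at the appropriate higher order, noting that the extra conditions flagged in the remark following Theorem~\ref{Prop:Conv} are exactly what ensures finiteness of the corresponding constant $K$ in that proposition; the resulting bound is therefore again of the stated form $C(K)$, with $C$ non-decreasing in its argument.
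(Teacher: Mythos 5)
Your proposal is correct and follows essentially the same route as the paper: decompose $AY_l+F(Y_l)$ via $(\eta-A)^{1-\gamma}(\eta-A)^{\gamma}Y_l$ plus the $\eta$-term and $(\eta-A)^{-\alpha}F(Y_l)$, use the linear growth of $F$ in $H_\alpha$ and of $b,B$ in $H$ resp.\ $H_\beta$ together with the bounded negative fractional powers and the embedding $H_\gamma\hookrightarrow H_\alpha, H_\beta$, and close with the uniform moment bound on $\|Y_l\|_{H_\gamma}$. Two cosmetic remarks: since $K$ only controls $\|(\eta-A)^{-r}\|_{L(H)}$ for $r\in[0,1]$ and $\gamma$ (and possibly $\alpha$) may exceed $1$, one should split such powers as, e.g., $\|(\eta-A)^{-\gamma/2}\|_{L(H)}^2$ as the paper does; and your explicit bookkeeping of the required moment order $p(2+q)$ via Proposition~\ref{Prop:Bounds} is actually more careful than the paper's own write-up, which silently absorbs this into $C(K)$.
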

\begin{proof}
Note that $\gamma-1\in[0,\frac{1}{2})$ and $\gamma-\alpha\in[0,1)$. 
By the linear growth of $F$ and the properties of the operator $A$, 
we obtain that
\begin{align*} 
	&\normLpH{p}{}{AY_l+F(Y_l)} \nonumber \\
	&\leq \normLpH{p}{}{(\eta - A + \eta)(\eta-A)^{-\gamma}(\eta-A)^{\gamma}Y_l} 
	+ \etaA{-\alpha} \normLpH{p}{\alpha}{F(Y_l)} \nonumber \\
	&\leq \Big( \etaA{1-\gamma} + \eta \etaA{-\gamma/2}^2 \Big) \normLpH{p}{\gamma}{Y_l} 
	+ C(K) \normLpR{1+\norm{Y_l}_{H_{\alpha}}} \nonumber \\
	&\leq C(K) \normLpH{p}{\gamma}{Y_l} + C(K) \Big( 1 + \etaA{\alpha-\gamma} 
	\normLpH{p}{\gamma}{Y_l} \Big) 
	\leq C(K).
\end{align*}
For the second estimate, we point out that $\gamma-\beta \in[0,\frac{1}{2})$.
By the linear growth of $B$, we compute for $q \in \{1,2,4\}$ that
\begin{align*} 
	&\normLpR{\norm{b(\cdot,Y_l)}_H ^{2}\norm{B(Y_l)}_{HS(U_0,H)}^q} \\
	&\leq \etaA{-\beta}^{q} \normLpR{\norm{b(\cdot,Y_l)}_H^{2} \norm{B(Y_l)}_{HS(U_0,H_{\beta})}^q} \\
	&\leq C(K) \normLpR{\big(1 + \etaA{-\gamma/2}^{4} \norm{Y_l}_{H_{\gamma}}^{2} \big)
	\big(1 + \etaA{\beta-\gamma} \norm{Y_l}_{H_{\gamma}} \big)^q} \\
	&\leq C(K) \Big( 1 + \norm{Y_l}_{L^{p}(\Omega;H_{\gamma})}^{2+q}\Big) \leq C(K).
\end{align*} 
\end{proof}
To start with, we treat the terms involving the mapping $F$. 
Let $l\in\{0,\ldots,M-1\}$, 
we employ~\eqref{eq:Ito_rewrite} to rewrite the second term
\begin{align*}
	&\normLpH{2}{}{\Upsilon^F_l} \\ 
	&\leq \frac{h^2}{2} \normLpH{2}{}{\int_0^1 
		\Big[ F'(Y_l + r \cdrei [A Y_l + F(Y_l)]) 
		- F'(Y_l) \Big] \dd r \Big( A Y_l +F(Y_l) \Big)} \\
	&\quad +\normALpH{\bigg}{2}{}{\int_0^1 
		\Big[ F'(Y_l+r \cvier b(\cdot,Y_l)) - F'(Y_l)\Big] \dd r 
		\bigg( B(Y_l) \WdsOK \bigg)} \\
	&\quad +\frac{h^2}{4} \normALpH{\bigg}{2}{}{\sumJ \int_0^1 
		\Big[ F''(Y_l+r \cfuenf b(\cdot,Y_l)) - 2 F''(Y_l) 
		+ F''(Y_l-r \cfuenf b(\cdot,Y_l))\Big] (1-r) \dd r \\
	&\quad \quad \Big( B(Y_l) g_j, B(Y_l) g_j\Big)} \\
	&\leq \frac{h^2}{2} \E{\int_0^1 
		\norm{F'(Y_l + r \cdrei [A Y_l + F(Y_l)]) - F'(Y_l)}_{L(H)}^2 \dd r 
		\norm{A Y_l +F(Y_l)}_H^2}^{1/2} \\
	&\quad +\E{\int_0^1 \norm{F'(Y_l+r \cvier b(\cdot,Y_l)) 
			- F'(Y_l)}_{L(H)}^2 \dd r 
		\bigg\lVert\int_{lh}^{(l+1)h} B(Y_l) ((l+1)h-s) \dd W_s
		\bigg\rVert_H^2}^{1/2} \\
	&\quad +\frac{h^2}{4\sqrt{3}} \mathbb{E} \bigg[  \int_0^1 
		\norm{F''(Y_l+r \cfuenf b(\cdot,Y_l)) - 2 F''(Y_l) 
		+ F''(Y_l-r \cfuenf b(\cdot,Y_l))}_{L^{(2)}(H,H)}^2  \dd r \\
	&\qquad \cdot \norm{B(Y_l)}_{HS(U_0,H)}^2 \bigg]^{1/2} . 
\end{align*}
In the next step, we use the Lipschitz continuity of both
$F'$ and $F''$ to compute
\begin{align*}
%  \begin{split}
	\normLpH{2}{}{\Upsilon^F_l} &\leq C(K) h^2 {| \cdrei |} 
	\E{\norm{A Y_l +F(Y_l)}_H^4}^{1/2} \\
	&\quad +C(K) {| \cvier |} \E{\bigg\lVert\int_{lh}^{(l+1)h}
	\norm{b(\cdot,Y_l)}_H 
	B(Y_l) ((l+1)h-s) \dd W_s\bigg\rVert_H^2}^{1/2} \\
	&\quad + C(K) h^2 {| \cfuenf |} \E{\norm{b(\cdot,Y_l)}_H^2
	\norm{B(Y_l)}_{HS(U_0,H)}^2}^{1/2}. 
%  \end{split}
\end{align*}
By the Burkholder-Davis-Gundy-type inequality~\cite[Thm 4.37]{MR1207136}, 
it follows that
\begin{align*}
	&\E{\bigg\lVert\int_{lh}^{(l+1)h} \norm{b(\cdot,Y_l)}_H 
	B(Y_l) ((l+1)h-s) \dd W_s\bigg\rVert_H^2}^{1/2} \\
	&\leq \left( \frac{1}{3} \int_{lh}^{(l+1)h} \E{  \norm{b(\cdot,Y_l)}_H^2 
    \norm{B(Y_l)}_{HS(U_0,H)}^2} ((l+1)h-s)^2 \dd s \right)^{1/2} .
\end{align*}
Therefore, we get with the help of Lemma~\ref{Lma:Estimates_FB}, 
where $q=1$, that
\begin{equation}\label{eq:UpsF_fin}
        \normLpH{2}{}{\Upsilon^F_l} \leq C(K) \big( h^2{ |\cdrei| } 
        + h^{3/2}{ |\cvier| } + h^2 { |\cfuenf| } \big) \, .
\end{equation}

Now, we consider the stochastic integral in~\eqref{eq:Aprox_PsiF_PsiB}. 
By~\cite[Thm 4.37]{MR1207136}, we compute
\begin{align*}
	&\normALpH{\Big}{2}{}{\int_0^{mh} \sum_{l=0}^{m-1} 
	\ind_{[lh,(l+1)h)}(s) \, \e^{A(m-l-\tfrac{1}{2})h} \Upsilon^B_l(s) \dd W_s}^2 \\
	&\leq \int_0^{mh} \normALpHS{\Big}{2}{}{\sum_{l=0}^{m-1} 
	\ind_{[lh,(l+1)h)}(s) \, \e^{A(m-l-\tfrac{1}{2})h} \Upsilon^B_l(s)}^2 \dd s \\
	&\leq \sum_{l=0}^{m-1} \int_{lh}^{(l+1)h} \normA{\Big}{\e^{A(m-l-\tfrac{1}{2})h}}_{L(H)}^2
	\normLpHS{2}{}{\Upsilon^B_l(s)}^2 \dd s \, .
\end{align*}
With the triangle inequality, we get that
\begin{align*}
	&\normLpHS{2}{}{\Upsilon^B_l(s)}  \\
	&\leq (s-lh) \normALpHS{\bigg}{2}{}{\int_0^1 \Big[ B'(Y_l+r \cacht 
	[AY_l+F(Y_l)]) - B'(Y_l)\Big] \dd r \Big( AY_l+F(Y_l)\Big)}  \\
	&\quad +\normALpHS{\bigg}{2}{}{\int_0^1  \Big[ B'(Y_l+r \cneun b(\cdot,Y_l)) 
	- B'(Y_l)\Big] \dd r \bigg( \int_{lh}^s B(Y_l) \dd W_u \bigg)}  \\
	&\quad +\frac{1}{2} \bigg\lVert \int_0^1 \Big[ B''(Y_l+r \czehn b(\cdot,Y_l))
	-2 B''(Y_l) + B''(Y_l-r \czehn b(\cdot,Y_l))\Big] (1-r) \dd r  \\
	&\quad \quad \bigg( \int_{lh}^s B(Y_l) \dd W_u, \int_{lh}^s B(Y_l) \dd W_u \bigg) 
	\bigg\rVert_{L^2(\Omega;HS(U_0,H))} \\
	&\quad +\normALpHS{\bigg}{2}{}{{ 
	\int_0^1  B'(Y_l) \bigg( \int_{lh}^s \Big[ B'(Y_l+ r \celf b(\cdot,Y_l))
	- B'(Y_l)\Big] \bigg( \int_{lh}^u B(Y_l) \dd W_v \bigg) \dd W_u \bigg)  \dd r } } \\
	&\quad +\bigg\lVert \int_0^1 \Big[ B'\big( Y_l + r \tfrac{\aelf}{\celf} 
	\big[ b(\cdot,Y_l+\celf b(\cdot,Y_l))-b(\cdot,Y_l)\big]\big) - B'(Y_l) \Big] \dd r \\
	&\quad \quad \bigg( \int_{lh}^s \int_0^1 B'(Y_l+r\celf b(\cdot,Y_l)) \dd r
	\bigg( \int_{lh}^u B(Y_l) \dd W_v \bigg) \dd W_u \bigg)
	\bigg\rVert_{L^2(\Omega;HS(U_0,H))} \\
    &\leq (s-lh) \E{\int_0^1 \norm{B'(Y_l+r \cacht [AY_l+F(Y_l)]) 
    - B'(Y_l)}_{L(H,HS(U_0,H))}^2 \dd r \norm{AY_l+F(Y_l)}_H^2}^{1/2} \\
	&\quad +\E{\int_0^1  \norm{B'(Y_l+r \cneun b(\cdot,Y_l)) 
	- B'(Y_l)}_{L(H,HS(U_0,H))}^2 \dd r \Big\lVert \int_{lh}^s B(Y_l) \dd W_u
	\Big\rVert_H^2}^{1/2} \\
	&\quad +\frac{1}{2\sqrt{3}} \mathbb{E}\bigg[ \int_0^1 \norm{B''(Y_l + r \czehn
	b(\cdot,Y_l))-2 B''(Y_l) + B''(Y_l-r \czehn b(\cdot,Y_l) )
	}_{L^{(2)}(H,HS(U_0,H))}^2 \dd r  \\
	&\qquad \cdot \Big\lVert \int_{lh}^s B(Y_l) \dd W_u \Big\rVert_H^4 \bigg]^{1/2} \\
	&\quad + \int_0^1  \mathbb{E} \bigg[ \norm{ B'(Y_l) }_{L(H,HS(U_0,H))}^2 \\
	&\qquad \cdot \Big\lVert \int_{lh}^s
	\Big(B'(Y_l+r\celf b(\cdot,Y_l)) - B'(Y_l)\Big) \bigg( \int_{lh}^u B(Y_l) \dd W_v \bigg) 
	\dd W_u \Big\rVert_H^2 \bigg]^{1/2} \dd r \\
	&\quad +\mathbb{E} \bigg[ \int_0^1 \norm{B'\big( Y_l + r \tfrac{\aelf}{\celf} 
	\big[ b(\cdot,Y_l+\celf b(\cdot,Y_l))-b(\cdot,Y_l)\big]\big) 
	- B'(Y_l)}_{L(H,HS(U_0,H))}^2 \dd r \\
	&\qquad \cdot \Big\lVert \int_{lh}^s \int_0^1 B'(Y_l +r \celf b(\cdot,Y_l)) 
	\dd r \bigg( \int_{lh}^u B(Y_l) \dd W_v \bigg) \dd W_u
	\Big\rVert_H^2\bigg]^{1/2} \, .
\end{align*}
In the next step, we use the Lipschitz continuity 
of $B'$ and $B''$ as well as the boundedness of $B'$ to obtain
\begin{align*}
	&\normLpHS{2}{}{\Upsilon^B_l(s)} \\
	&\leq C(K) (s-lh) \, |\cacht| \, \E{\norm{AY_l+F(Y_l)}_H^4}^{1/2} \\
	&\quad + C(K) \, { |\cneun| } \, \E{\Big\lVert \int_{lh}^s
	\norm{b(\cdot,Y_l)}_H B(Y_l) \dd W_u \Big\rVert_H^2}^{1/2} \\
	&\quad + C(K) \, { |\czehn| } \, \E{\Big\lVert \int_{lh}^s
	\norm{b(\cdot,Y_l)}_H^{1/2} B(Y_l) \dd W_u \Big\rVert_H^4}^{1/2} \\
	&\quad + \int_0^1 \bigg( \int_{lh}^s  \E{ 
	\big\| B'(Y_l + r \celf b(\cdot,Y_l)) - B'(Y_l) \big\|_{L(H,HS(U_0,H))}^2
	\Big\| \int_{lh}^u B(Y_l) \dd W_v  \Big\|_H^2 } \dd u \bigg)^{1/2} \dd r \\
	&\quad + C(K) \, { \bigg| \frac{\aelf}{\celf} \bigg| } \,
	\mathbb{E} \bigg[ \Big\lVert \int_{lh}^s \int_0^1 B'(Y_l + r \, \celf \,
	b(\cdot,Y_l)) \dd r \\
	&\quad \quad \bigg( \int_{lh}^u \norm{b(\cdot,Y_l + \celf \, b(\cdot,Y_l)) 
	- b(\cdot,Y_l)}_H B(Y_l) \dd W_v \bigg) \dd W_u \Big\rVert_H^2 \bigg]^{1/2} .
\end{align*}

Now, equations~\eqref{eq:E_AYFY} and~\eqref{eq:E_bYBY} 
in Lemma~\ref{Lma:Estimates_FB} with $q=4$ 
and~\cite[Thm 4.37]{MR1207136} imply with the 
Lipschitz continuity of $B'$ that
\begin{align*}
	&\normLpHS{2}{}{\Upsilon^B_l(s)} \\
	&\leq C(K) (s-lh) { |\cacht| } \\
	&\quad + C(K) (s-lh)^{1/2} { |\cneun| } \E{\norm{b(\cdot,Y_l)}_H^2
	\norm{B(Y_l)}_{HS(U_0,H)}^2}^{1/2} \\
	&\quad + C(K) (s-lh) { |\czehn| } \E{\norm{b(\cdot,Y_l)}_H^2
	\norm{B(Y_l)}_{HS(U_0,H)}^4}^{1/2} \\
	&\quad + C(K)  
	\int_0^1 \bigg( \int_{lh}^s \E{ r^2 |\celf|^2 
	\Big\lVert \int_{lh}^u \norm{b(\cdot,Y_l)}_H B(Y_l) \dd W_v \Big\rVert_{H}^2 } 
	\dd u \bigg)^{1/2} \dd r \\
	&\quad +C(K) \bigg| \frac{\aelf}{\celf} \bigg| 
	\bigg( \int_{lh}^s \mathbb{E} \bigg[  \bigg( \int_0^1 \Big\lVert B'(Y_l + r \celf b(\cdot,Y_l))
	\Big\rVert_{L(H,HS(U_0,H))} \\
	&\qquad \cdot
	\Big\lVert \int_{lh}^u \norm{b(\cdot,Y_l + \celf b(\cdot,Y_l)) - b(\cdot,Y_l) }_H
	B(Y_l) \dd W_v \Big\rVert_{H} \dd r \bigg)^2 \dd u \bigg)^{1/2} .
\end{align*}
Then, we use~\eqref{eq:E_bYBY} in Lemma~\ref{Lma:Estimates_FB} 
(where $q\in \{2,4\}$) and the Lipschitz continuity of $b$ to 
calculate
\begin{align*}
%  \begin{split}
	\normLpHS{2}{}{\Upsilon^B_l(s)} %\\
	&\leq C(K) \big( (s-lh) ( { |\cacht| + |\czehn| ) 
	+ (s-lh)^{1/2} |\cneun| } \big) \\
	&\quad + C(K) { |\celf| } \bigg(\int_{lh}^s \E{\Big\lVert 
	\int_{lh}^u \norm{b(\cdot,Y_l)}_H B(Y_l) \dd W_v 
	{ \Big\rVert_{H}^2 } } 
	\dd u \bigg)^{1/2} \\
	&\quad +C(K) { |\aelf| } \bigg(\int_{lh}^s \mathbb{E}\bigg[ 
	\Big\lVert \int_{lh}^u \norm{b(\cdot,Y_l))}_H B(Y_l) \dd W_v
	{ \Big\rVert_{H}^2 } \bigg] \dd u \bigg)^{1/2} .
%  \end{split}
\end{align*}
Another application of~\cite[Thm 4.37]{MR1207136} yields that
\begin{align*}
%  \begin{split}
	&\normLpHS{2}{}{\Upsilon^B_l(s)} \\
	&\leq C(K) \big( (s-lh) ( { |\cacht| + |\czehn| ) 
	+ (s-lh)^{1/2} |\cneun| } \big) \\
	&\quad +C(K) ({ |\celf| + |\aelf| } ) \bigg(\int_{lh}^s \int_{lh}^u
	\E{\norm{b(\cdot,Y_l)}_H^2 {
	\norm{B(Y_l)}_{HS(U_0,H)}^2 } } \dd v \dd u \bigg)^{1/2} .
%  \end{split}
\end{align*}
Then, we use Lemma~\ref{Lma:Estimates_FB} again with $q=2$ and 
compute
\begin{equation}\label{eq:UpsB_fin}
	\normLpHS{2}{}{\Upsilon^B_l(s)} 
	\leq C(K) \big( (s-lh) ({ |\cacht| + |\czehn| + |\celf| + |\aelf| ) 
	+ (s-lh)^{1/2} |\cneun| } \big).
\end{equation}
As the final step, a combination of the estimates 
in~\eqref{eq:UpsF_fin} and~\eqref{eq:UpsB_fin} 
yields the inequality
\begin{align*}
	&\normLpH{2}{}{\bar{Y}_m-Y_m}^2 \\
	&\leq C(K) \bigg( \sum_{l=0}^{m-1} \normLpH{2}{}{\Upsilon_l^F} \bigg)^2 
	+ C(K) \sum_{l=0}^{m-1} \int_{lh}^{(l+1)h} \normLpHS{2}{}{\Upsilon_l^B (s)}^2
	\dd s \\
	&\leq C(K) \bigg( \sum_{l=0}^{m-1} h^2 { |\cdrei| } 
	+ h^{3/2}{ |\cvier| } + h^2 { |\cfuenf| } \bigg)^2 \\ 
	&\quad + C(K) \sum_{l=0}^{m-1} \int_{lh}^{(l+1)h} \big( (s-lh)^2
	({ |\cacht| + |\czehn| + |\celf| + |\aelf| } )^2 
	+ (s-lh) \cneun^2 \big) \dd s \Big) \\
    &\leq C(K) \Big( h^2 \big(\cdrei^2+\cfuenf^2+\cacht^2+\czehn^2+\celf^2
    +\aelf^2 \big) + h (\cvier^2+\cneun^2) \Big) . 
\end{align*}
Now, for coefficients $\cdrei,\cfuenf,\cacht,\czehn,\celf,\aelf\in\mathcal{O}(h^{\gamma-1})$ and $\cvier,\cneun\in\mathcal{O}(h^{\gamma-1/2})$, we obtain the final estimate
\begin{equation*}
    \normLpH{2}{}{\bar{Y}_m-Y_m} \leq C(K) h^{\gamma},
\end{equation*}
which completes the proof.
\begin{remark}\label{Rem:CoeffChoice}
	If we set the coefficients $\cdrei = hc_1, \cvier = hc_2, \cfuenf = \sqrt{h}c_3, 
	\cacht = hc_4, \cneun = hc_5, \czehn = \celf = \sqrt{h}c_6, \aelf = hc_7$ 
	for some arbitrary ${c_1},{c_2}, {c_3},{c_4}, {c_5}, {c_6},{c_7} \in \mathbb{R} 
	\setminus \{0\}$, we obtain the exponential stochastic Runge-Kutta type schemes 
	introduced in Section~\ref{Sec:DFM} with Table~\ref{SRK-scheme-Alg-Coeff}. It can 
	easily be seen that this choice results in a convergence of order $\gamma$ for 
	some $\gamma \in [1,\frac{3}{2})$.
\end{remark}
\end{proof}

We give the following two lemmas which are different from the statements 
given in~\cite{MR3534472} as the arguments may differ.
\begin{lem}\label{Lem:CC1}
	Let the setting described in Section~\ref{Sec:Setting} be fulfilled and let $U=U_0$. 
	If the specific second commutativity condition~\eqref{eq:comm_con_1}
	is satisfied, then
	\begin{align} \label{eq:comm_conv_rewrite_912_a}
	    \int_{lh}^{(l+1)h} B'(v) \bigg( \int_{lh}^s 
		B(\tilde{v}) \dd W_u \bigg) \dd W_s
		&= \frac{1}{2} B'(v) \Big( B(\tilde{v}) \Delta W_l \Big)
		\Delta W_l - \frac{h}{2} \sumJ B'(v) 
		\Big( B(\tilde{v}) g_j \Big) g_j
	\end{align}
	for all $v, \tilde{v} \in H$.
\end{lem}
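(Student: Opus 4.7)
The plan is to expand the driving $Q$-Wiener process in the orthonormal basis $(g_j)_{j \in \mathcal{J}}$ of $U_0 = U$, which reduces the claimed identity to a combinatorial statement about iterated scalar It\^o integrals that is closed by a symmetrization argument based on~\eqref{eq:comm_con_1}. Writing $W_t = \sumJ \beta_t^j g_j$ with mutually independent standard real-valued Brownian motions $(\beta^j)_{j \in \mathcal{J}}$, I would first expand the inner stochastic integral as
\[
\int_{lh}^s B(\tilde{v}) \dd W_u = \sumJ (\beta_s^j - \beta_{lh}^j) \, B(\tilde{v}) g_j,
\]
apply $B'(v)$ and integrate against $\dd W_s$ to obtain
\[
\int_{lh}^{(l+1)h} B'(v) \bigg( \int_{lh}^s B(\tilde{v}) \dd W_u \bigg) \dd W_s = \sum_{j,k \in \mathcal{J}} B'(v)\big(B(\tilde{v}) g_j\big) g_k \cdot I_l^{j,k},
\]
where $I_l^{j,k} := \int_{lh}^{(l+1)h} (\beta_s^j - \beta_{lh}^j) \, \mathrm{d}\beta_s^k$.

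The key step is symmetrization. By It\^o's product rule applied pairwise to the scalar Brownian motions,
\[
I_l^{j,k} + I_l^{k,j} = (\beta_{(l+1)h}^j - \beta_{lh}^j)(\beta_{(l+1)h}^k - \beta_{lh}^k) - h \, \delta_{jk}
\]
for all $j,k \in \mathcal{J}$. The first commutativity condition~\eqref{eq:comm_con_1}, specialised to $u = g_j$ and $\tilde{u} = g_k$, yields $B'(v)(B(\tilde{v}) g_j) g_k = B'(v)(B(\tilde{v}) g_k) g_j$, so the coefficients $a_{j,k} := B'(v)(B(\tilde{v}) g_j) g_k$ are symmetric in $(j,k)$. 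Consequently,
\[
\sum_{j,k} a_{j,k} \, I_l^{j,k} = \tfrac{1}{2} \sum_{j,k} a_{j,k} \big(I_l^{j,k} + I_l^{k,j}\big) = \tfrac{1}{2} \sum_{j,k} a_{j,k} \Big[(\beta_{(l+1)h}^j - \beta_{lh}^j)(\beta_{(l+1)h}^k - \beta_{lh}^k) - h \, \delta_{jk}\Big].
\]
Recognising $\Delta W_l = \sumJ (\beta_{(l+1)h}^j - \beta_{lh}^j) g_j$, the double sum in the first term collapses into $B'(v)(B(\tilde{v}) \Delta W_l) \Delta W_l$ by bilinearity, while the Kronecker-delta contribution reproduces $-\tfrac{h}{2} \sumJ B'(v)(B(\tilde{v}) g_j) g_j$, which is exactly the claimed identity.

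The main obstacle, and the only point needing care beyond the algebraic manipulations above, is justifying the interchange of summation and stochastic integration in the Hilbert-space setting: the series for the left-hand side must converge in $L^2(\Omega; H)$, the symmetrisation step requires unconditional convergence of the double sum, and the final resummation into $B'(v)(B(\tilde{v}) \Delta W_l) \Delta W_l$ has to be rigorous. All of this follows from the assumption $B(\tilde{v}) \in L_{HS}(U_0, H)$ (so that $\sumJ \|B(\tilde v) g_j\|_H^2 < \infty$) together with the boundedness of $B'(v) \in L(H, L_{HS}(U_0,H))$, which allow a truncation argument: one proves the identity first for finite truncations of the index set $\mathcal{J}$ and then passes to the limit using the It\^o isometry on both sides.
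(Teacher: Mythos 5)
Your argument is correct and is essentially the same one the paper relies on: the paper's proof simply defers to the corresponding lemma in~\cite{MR3320928}, whose proof proceeds exactly by expanding $W$ in the orthonormal basis of $U_0$, applying the It\^o product formula to the scalar iterated integrals, and using the symmetry supplied by~\eqref{eq:comm_con_1} to collapse the double sum. You have merely written out in full the details (including the $L^2(\Omega;H)$-convergence and truncation issues) that the paper omits by citation, and your observation that the argument is insensitive to whether $v=\tilde v$ is precisely the point the authors make in their one-line proof.
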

\begin{proof}
	The proof follows as in~\cite{MR3320928}, where $v=\tilde{v}$.
\end{proof}
\begin{lem}\label{lem:CC2}
    Let the setting described in Section~\ref{Sec:Setting} be fulfilled 
    and let $U=U_0$. If the specific second commutativity 
    condition~\eqref{eq:comm_con_2}	is satisfied, then
	\begin{equation} \label{eq:comm_conv_rewrite_10111314_a}
		\begin{split}
		&\frac{1}{2} \int_{lh}^{(l+1)h} B''(w) \bigg( \int_{lh}^s 
		B(\tilde{w}) \dd W_u, \int_{lh}^s B(\hat{w}) 
		\dd W_u \bigg) \dd W_s \\ 
		&= \frac{1}{6} B''(w) \Big( B(\tilde{w}) \Delta W_l,
		B(\hat{w}) \Delta W_l \Big) \Delta W_l \\
		&\quad - \frac{1}{2} \sumJ B''(w) \Big( B(\tilde{w}) g_j,
		B(\hat{w}) g_j \Big) \WdsOK \\
		\end{split}
	\end{equation}
	and
	\begin{equation} \label{eq:comm_conv_rewrite_10111314_a-b}
		\begin{split}
			&\int_{lh}^{(l+1)h} B'(v) \bigg( \int_{lh}^s
			B'(\tilde{v}) \bigg( \int_{lh}^u B(\hat{v}) 
			\dd W_v \bigg) \dd W_u \bigg) \dd W_s \\
			&=
			\frac{1}{6} B'(v) \Big( B'(\tilde{v}) 
			\Big( B(\hat{v}) \Delta W_l \Big) \Delta W_l \Big) \Delta W_l 
			- \frac{h}{2} \sumJ B'(v) \Big( B'(\tilde{v}) 
			\Big( B(\hat{v}) g_j \Big) g_j \Big) \Delta W_l
		\end{split}
	\end{equation}
	for all $v, \tilde{v}, \hat{v}, w, \tilde{w}, \hat{w} \in H$.
\end{lem}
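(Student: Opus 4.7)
The plan is to expand the $Q$-Wiener process in the orthonormal basis $(g_j)_{j\in\mathcal{J}}$ as $W_s - W_{lh} = \sumJ g_j (\beta^j_s - \beta^j_{lh})$ with independent scalar Brownian motions $\beta^j$, and to reduce both identities to sums over triples $(i,j,k) \in \mathcal{J}^3$ of scalar iterated It\^{o} integrals. Writing $\Delta\beta^i = \beta^i_{(l+1)h}-\beta^i_{lh}$, $X^i_s = \beta^i_s - \beta^i_{lh}$ and $J^i = \int_{lh}^{(l+1)h} X^i_s \dd s$, the argument rests on two ingredients: (i) a symmetrization identity over the permutation group $S_3$ for the relevant scalar iterated integrals, derived from It\^{o}'s product formula, and (ii) the commutativity conditions~\eqref{eq:comm_con_2}, which imply that the trilinear tensors
\begin{equation*}
    T_1(g_i,g_j,g_k) = B''(w)\bigl(B(\tilde w)g_i, B(\hat w)g_j\bigr) g_k, \quad T_2(g_i,g_j,g_k) = B'(v)\bigl(B'(\tilde v)(B(\hat v)g_i)\, g_j\bigr) g_k
\end{equation*}
appearing in the respective expansions are fully symmetric in $(i,j,k)$. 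Once this symmetry is available, we may freely average over $S_3$ in each sum.

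For identity~\eqref{eq:comm_conv_rewrite_10111314_a}, setting $I^{(i,j,k)} = \int_{lh}^{(l+1)h} X^i_s X^j_s \dd\beta^k_s$, the left-hand side becomes $\tfrac{1}{2} \sum_{i,j,k} T_1(g_i,g_j,g_k)\, I^{(i,j,k)}$. A single application of It\^{o}'s formula to $X^i X^j X^k$ yields
\begin{equation*}
    \Delta\beta^i \Delta\beta^j \Delta\beta^k = I^{(i,j,k)} + I^{(i,k,j)} + I^{(j,k,i)} + \delta_{ij} J^k + \delta_{ik} J^j + \delta_{jk} J^i,
\end{equation*}
which together with the obvious symmetry $I^{(a,b,c)} = I^{(b,a,c)}$ gives $\sum_{\pi \in S_3} I^{(\pi(i),\pi(j),\pi(k))} = 2\bigl(\Delta\beta^i \Delta\beta^j \Delta\beta^k - \delta_{ij} J^k - \delta_{ik} J^j - \delta_{jk} J^i\bigr)$. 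Averaging $T_1 \cdot I$ over $S_3$ using the full symmetry of $T_1$, and collapsing the Kronecker contractions against $\sum_k g_k J^k = \WdsOK$, produces the right-hand side of~\eqref{eq:comm_conv_rewrite_10111314_a} after multiplication by $\tfrac{1}{2}$.

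For~\eqref{eq:comm_conv_rewrite_10111314_a-b}, define $I^{(i,j,k)}_3 = \int_{lh}^{(l+1)h}\!\int_{lh}^s\!\int_{lh}^u \dd\beta^i_v \dd\beta^j_u \dd\beta^k_s$, so that the left-hand side equals $\sum_{i,j,k} T_2(g_i,g_j,g_k)\, I^{(i,j,k)}_3$. Expanding each of the three stochastic integrals $\int X^\bullet X^\bullet \dd\beta^\bullet$ in the It\^{o} formula for $X^i X^j X^k$ via a second application of It\^{o}'s formula produces six triple iterated integrals together with correction terms of the form $\int_{lh}^{(l+1)h}(s-lh)\dd\beta^a_s = h\Delta\beta^a - J^a$. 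The $J^\bullet$ pieces cancel exactly against the $J^\bullet$-terms already present from the first It\^{o} formula, leaving
\begin{equation*}
    \sum_{\pi \in S_3} I^{(\pi(i),\pi(j),\pi(k))}_3 = \Delta\beta^i \Delta\beta^j \Delta\beta^k - h\bigl(\delta_{jk}\Delta\beta^i + \delta_{ik}\Delta\beta^j + \delta_{ij}\Delta\beta^k\bigr).
\end{equation*}
Averaging $T_2 \cdot I_3$ over $S_3$ using the symmetry of $T_2$ and contracting the Kronecker deltas against $\sum_i g_i \Delta\beta^i = \Delta W_l$ yields exactly the right-hand side of~\eqref{eq:comm_conv_rewrite_10111314_a-b}.

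The main obstacle I expect is the derivation of the triple symmetrization identity: it requires carefully tracking how the three integrals $\int X^\bullet X^\bullet \dd\beta^\bullet$ decompose into six triple iterated integrals plus three $J^\bullet$ corrections, and verifying the exact cancellation with the three $J^\bullet$ contributions produced by the outer It\^{o} formula. Once the two symmetrization identities are in place, the reduction using the commutativity of $T_1$ and $T_2$ is purely algebraic, and the standard justification of interchanging $\sumJ$ with the stochastic integral in $L^2$ under the regularity assumed on $B$ closes the argument.
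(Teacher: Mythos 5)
Your proof is correct. Note that the paper itself gives no computation here: it simply remarks that the statement ``can be proved in the same way as Lemma~2 in~\cite{MR3320928}'' with the arguments $v,\tilde v,\hat v,w,\tilde w,\hat w$ allowed to differ, so you are supplying in full the argument that the authors outsource to that reference. Your route --- Karhunen--Lo\`eve expansion $W_s-W_{lh}=\sumJ g_j X^j_s$ with independent scalar Brownian motions, reduction to the scalar iterated integrals $I^{(i,j,k)}$ and $I_3^{(i,j,k)}$, and symmetrization over $S_3$ using the full symmetry of $T_1$ and $T_2$ guaranteed by~\eqref{eq:comm_con_2} --- is the natural one, and I checked both symmetrization identities: the first It\^o expansion of $X^iX^jX^k$ gives $\sum_{\pi}I^{(\pi(i),\pi(j),\pi(k))}=2\big(\Delta\beta^i\Delta\beta^j\Delta\beta^k-\delta_{ij}J^k-\delta_{ik}J^j-\delta_{jk}J^i\big)$, and substituting $I^{(i,j,k)}=I_3^{(j,i,k)}+I_3^{(i,j,k)}+\delta_{ij}(h\Delta\beta^k-J^k)$ indeed cancels the $J^{\bullet}$ terms and yields your formula for $\sum_{\pi}I_3^{(\pi(i),\pi(j),\pi(k))}$; contracting against the symmetric tensors and multiplying by the stated prefactors reproduces both right-hand sides exactly. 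The only point to make explicit in a final write-up is the one you already flag: the termwise expansion and the exchange of $\sumJ$ with the stochastic integrals hold in $L^2(\Omega;H)$ because $B(\cdot)\in L_{HS}(U_0,H)$ and $B'$, $B''$ are bounded on the relevant arguments, so the partial sums converge and the identities pass to the limit.
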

\begin{proof}
	This statement can be proved in the same way as Lemma~2 
	in~\cite{MR3320928} for arguments $v, \tilde{v}, \hat{v}, w, 
	\tilde{w}, \hat{w} \in H$ that are allowed to differ.
\end{proof}
\bibliographystyle{plainurl}
\bibliography{SPDE_all}

\begin{thebibliography}{10}

\bibitem{MR2996432}
A.~Barth and A.~Lang.
\newblock Milstein approximation for advection-diffusion equations driven by
  multiplicative noncontinuous martingale noises.
\newblock {\em Appl. Math. Optim.}, 66(3):387--413, 2012.
\newblock URL: \url{http://dx.doi.org/10.1007/s00245-012-9176-y}, \href
  {https://doi.org/10.1007/s00245-012-9176-y}
  {\path{doi:10.1007/s00245-012-9176-y}}.

\bibitem{MR3027891}
A.~Barth and A.~Lang.
\newblock {$L\sp p$} and almost sure convergence of a {M}ilstein scheme for
  stochastic partial differential equations.
\newblock {\em Stochastic Process. Appl.}, 123(5):1563--1587, 2013.
\newblock URL: \url{http://dx.doi.org/10.1016/j.spa.2013.01.003}, \href
  {https://doi.org/10.1016/j.spa.2013.01.003}
  {\path{doi:10.1016/j.spa.2013.01.003}}.

\bibitem{MR3534472}
S.~Becker, A.~Jentzen, and P.~E. Kloeden.
\newblock An exponential {W}agner-{P}laten type scheme for {SPDE}s.
\newblock {\em SIAM J. Numer. Anal.}, 54(4):2389--2426, 2016.
\newblock URL: \url{http://dx.doi.org/10.1137/15M1008762}, \href
  {https://doi.org/10.1137/15M1008762} {\path{doi:10.1137/15M1008762}}.

\bibitem{MR1207136}
G.~Da~Prato and J.~Zabczyk.
\newblock {\em Stochastic equations in infinite dimensions}, volume~44 of {\em
  Encyclopedia of Mathematics and its Applications}.
\newblock Cambridge University Press, Cambridge, 1992.
\newblock URL: \url{http://dx.doi.org/10.1017/CBO9780511666223}, \href
  {https://doi.org/10.1017/CBO9780511666223}
  {\path{doi:10.1017/CBO9780511666223}}.

\bibitem{MR3236753}
G.~Da~Prato and J.~Zabczyk.
\newblock {\em Stochastic equations in infinite dimensions}, volume 152 of {\em
  Encyclopedia of Mathematics and its Applications}.
\newblock Cambridge University Press, Cambridge, second edition, 2014.
\newblock URL: \url{http://dx.doi.org/10.1017/CBO9781107295513}, \href
  {https://doi.org/10.1017/CBO9781107295513}
  {\path{doi:10.1017/CBO9781107295513}}.

\bibitem{MR880357}
J.~Dixon and S.~McKee.
\newblock Weakly singular discrete {G}ronwall inequalities.
\newblock {\em Z. Angew. Math. Mech.}, 66(11):535--544, 1986.
\newblock \href {https://doi.org/10.1002/zamm.19860661107}
  {\path{doi:10.1002/zamm.19860661107}}.

\bibitem{MR4112639new}
C.~{\noopsort{Hallern}}{von Hallern} and A.~R{\"o}{\ss}ler.
\newblock An analysis of the {M}ilstein scheme for {SPDE}s without a
  commutative noise condition.
\newblock In {\em Monte {C}arlo and quasi-{M}onte {C}arlo methods}, volume 324
  of {\em Springer Proc. Math. Stat.}, pages 503--521. Springer, Cham, 2020.
\newblock \href {https://doi.org/10.1007/978-3-030-43465-6_25}
  {\path{doi:10.1007/978-3-030-43465-6_25}}.

\bibitem{HalRoe2023pre}
C.~{\noopsort{Hallern}}{von Hallern} and A.~R\"{o}{\ss}ler.
\newblock {A derivative-free Milstein type approximation method for SPDEs
  covering the non-commutative noise case.}
\newblock {\em Stoch. PDE: Anal. Comp.}, 2022.
\newblock \href {https://doi.org/10.1007/s40072-022-00274-6}
  {\path{doi:10.1007/s40072-022-00274-6}}.

\bibitem{MR3320928}
A.~Jentzen and M.~R{\"o}ckner.
\newblock A {M}ilstein scheme for {SPDE}s.
\newblock {\em Found. Comput. Math.}, 15(2):313--362, 2015.
\newblock URL: \url{http://dx.doi.org/10.1007/s10208-015-9247-y}, \href
  {https://doi.org/10.1007/s10208-015-9247-y}
  {\path{doi:10.1007/s10208-015-9247-y}}.

\bibitem{MR1825100}
P.~E. Kloeden and S.~Shott.
\newblock Linear-implicit strong schemes for {I}t\^o-{G}alerkin approximations
  of stochastic {PDE}s.
\newblock {\em J. Appl. Math. Stochastic Anal.}, 14(1):47--53, 2001.
\newblock URL: \url{http://dx.doi.org/10.1155/S1048953301000053}, \href
  {https://doi.org/10.1155/S1048953301000053}
  {\path{doi:10.1155/S1048953301000053}}.

\bibitem{MR3842926}
C.~Leonhard and A.~R\"{o}{\ss}ler.
\newblock Enhancing the {O}rder of the {M}ilstein {S}cheme for {S}tochastic
  {P}artial {D}ifferential {E}quations with {C}ommutative {N}oise.
\newblock {\em SIAM J. Numer. Anal.}, 56(4):2585--2622, 2018.
\newblock \href {https://doi.org/10.1137/16M1094087}
  {\path{doi:10.1137/16M1094087}}.

\bibitem{MR3949104}
C.~Leonhard and A.~R\"{o}{\ss}ler.
\newblock Iterated stochastic integrals in infinite dimensions: approximation
  and error estimates.
\newblock {\em Stoch. Partial Differ. Equ. Anal. Comput.}, 7(2):209--239, 2019.
\newblock \href {https://doi.org/10.1007/s40072-018-0126-9}
  {\path{doi:10.1007/s40072-018-0126-9}}.

\bibitem{MR3047942}
G.~J. Lord and A.~Tambue.
\newblock Stochastic exponential integrators for the finite element
  discretization of {SPDE}s for multiplicative and additive noise.
\newblock {\em IMA J. Numer. Anal.}, 33(2):515--543, 2013.
\newblock URL: \url{http://dx.doi.org/10.1093/imanum/drr059}, \href
  {https://doi.org/10.1093/imanum/drr059} {\path{doi:10.1093/imanum/drr059}}.

\bibitem{MR4032895}
C.~Reisinger and Z.~Wang.
\newblock Stability and error analysis of an implicit {M}ilstein finite
  difference scheme for a two-dimensional {Z}akai {SPDE}.
\newblock {\em BIT}, 59(4):987--1029, 2019.
\newblock \href {https://doi.org/10.1007/s10543-019-00761-8}
  {\path{doi:10.1007/s10543-019-00761-8}}.

\bibitem{MR3011387}
X.~Wang and S.~Gan.
\newblock A {R}unge-{K}utta type scheme for nonlinear stochastic partial
  differential equations with multiplicative trace class noise.
\newblock {\em Numer. Algorithms}, 62(2):193--223, 2013.
\newblock URL: \url{http://dx.doi.org/10.1007/s11075-012-9568-8}, \href
  {https://doi.org/10.1007/s11075-012-9568-8}
  {\path{doi:10.1007/s11075-012-9568-8}}.

\end{thebibliography}
\end{document}